\patchcmd{\@maketitle}{\LARGE \@title}{\LARGE\bfseries\@title}{}{}
\definecolor{myblue}{rgb}{.9, .9, 1}
\def\th@plain{%
	\thm@notefont{}
	\itshape 
}
\def\th@definition{%
	\thm@notefont{}
	\normalfont 
}
\renewenvironment{proof}[1][\proofname]{\par
	\normalfont
	\topsep0\p@\@plus3\p@ \trivlist
	\item[\hskip\labelsep\itshape
	#1\@addpunct{.}]\ignorespaces
}{%
	\qed\endtrivlist
}
\newtheorem{theorem}{Theorem}[section]
\newtheorem{lemma}[theorem]{Lemma}
\newtheorem{corollary}[theorem]{Corollary}
\newtheorem{proposition}[theorem]{Proposition}
\newtheorem{fact}[theorem]{Fact}
\theoremstyle{definition}
\newtheorem{definition}[theorem]{Definition}
\theoremstyle{definition}
\theoremstyle{definition}
\newtheorem{remark}[theorem]{Remark}
\setlist[enumerate]{nosep}
\newcommand{\menge}[2]{\{{#1}~\big |~{#2}\}}
\newcommand{\ball}[2]{{\ensuremath{\it I\hspace{-5pt}B}}(#1;#2)}
\newcommand{\bball}[2]{{\ensuremath{\it I\hspace{-5pt}B}}\big(#1;#2\big)}
\newcommand{\uball}{{\ensuremath{\it I\hspace{-5pt}B}}}
\newcommand{\scal}[2]{\left\langle {#1},{#2} \right\rangle}
\newcommand{\pnX}[1]{N^{\rm prox}_{#1}} 
\newcommand{\To}{\ensuremath{\rightrightarrows}}
\newcommand{\NN}{\ensuremath{\mathbb N}}
\newcommand{\nnn}{\ensuremath{{n\in{\mathbb N}}}}
\newcommand{\RR}{\ensuremath{\mathbb R}}
\newcommand{\RP}{\ensuremath{\mathbb{R}_+}}
\newcommand{\RPP}{\ensuremath{\mathbb{R}_{++}}}
\newcommand{\RM}{\ensuremath{\mathbb{R}_-}}
\newcommand{\argmin}{\ensuremath{\operatorname*{argmin}}}
\newcommand{\inte}{\ensuremath{\operatorname{int}}}
\newcommand{\reli}{\ensuremath{\operatorname{ri}}}
\newcommand{\aff}{\ensuremath{\operatorname{aff}}}
\newcommand{\epi}{\ensuremath{\operatorname{epi}}}
\newcommand{\Fix}{\ensuremath{\operatorname{Fix}}}
\newcommand{\Id}{\ensuremath{\operatorname{Id}}}
\def\ox{\overline{x}}
\def\hbeta{{\widehat\beta}}
\definecolor{darkred}{rgb}{.9, .0, .0}
\begin{document}

\title{Linear convergence of the generalized Douglas--Rachford algorithm for feasibility problems}

\author{
Minh N.\ Dao\thanks{CARMA, University of Newcastle, Callaghan, NSW 2308, Australia. 
E-mail: \texttt{daonminh@gmail.com}}
~and~
Hung M.\ Phan\thanks{Department of Mathematical Sciences, Kennedy College of Sciences, University of Massachusetts Lowell, MA 01854, USA.
E-mail: \texttt{hung\char`_phan@uml.edu}.}}

\date{April 16, 2018}

\maketitle

\begin{abstract}
In this paper, we study the generalized Douglas--Rachford algorithm and its cyclic variants which include many projection-type methods such as the classical Douglas--Rachford algorithm and the alternating projection algorithm. Specifically, we establish several local linear convergence results for the algorithm in solving feasibility problems with finitely many closed possibly nonconvex sets under different assumptions. Our findings not only relax some regularity conditions but also improve linear convergence rates in the literature. In the presence of convexity, the linear convergence is global.
\end{abstract}

{\small
\noindent{\bfseries AMS Subject Classifications:}
{Primary: 
47H10, 
49M27; 
Secondary: 
41A25, 
65K05, 
65K10, 
90C26. 
}

\noindent{\bfseries Keywords:}
affine-hull regularity,
cyclic algorithm, generalized Douglas--Rachford algorithm, linear convergence, linear regularity, strong regularity, superregularity, quasi Fej\'er monotonicity, quasi coercivity.
}

\maketitle

\section{Introduction}

The \emph{feasibility problem} of finding a point in the intersection of closed constraint sets is of central importance in diverse areas of mathematics and engineering. Many methods have been proposed for this problem, and most of them naturally involve \emph{nearest point projectors} and their variants with respect to underlying sets. We refer the readers to \cite{BB96} and the references therein for more reviews and discussions on projection-type methods and applications.

Among methods for feasibility problems, the \emph{Douglas--Rachford (DR) algorithm} has recently drawn much attention due to its interesting features which mysteriously allow for its success in both convex and nonconvex settings. The DR algorithm was first formulated in \cite{DR56} for solving nonlinear heat flow problems numerically. Since then, it has been emerged in optimization theory and applications thanks to the seminal work \cite{LM79}; more specifically, the DR algorithm was extended to the problem of finding a zero of the sum of two maximally monotone operators. When specialized to normal cone operators, the DR algorithm can be used for solving feasibility problems.

In the convex case, the convergence theory of the DR algorithm is well developed. In particular, weak convergence of the DR sequence to a fixed point was proved in \cite{LM79} while that of the shadow sequence to a solution was proved in \cite{Sva11}. When the problem is \emph{infeasible}, it was shown in \cite{BCL04} that the shadow sequence is bounded with cluster points solving a best approximation problem, in \cite{BDM16} that the entire shadow sequence is weakly convergent if one set is an affine subspace, and in \cite{BM17} that the affine subspace condition can be removed. For nonconvex problems, the DR algorithm enjoys successful applications including, notably, phase retrieval, protein folding, and Sudoku, see, e.g., \cite{ERT07} (in which the DR algorithm is referred to as a special case of the \emph{difference map}) and the references therein, even though the supporting theory is far from being complete.

Regarding convergence rate analysis of the DR algorithm, various results have been obtained, some of which even apply to certain nonconvex settings. For example, \cite{BBNPW14} proved linear convergence for two subspaces, \cite{BD17, BDNP16b} proved several finite convergence results, in particular, when the sets involved are subspaces, halfspaces, epigraphs of convex functions, and finite sets, while \cite{DT17, HL13, Pha14} proved the linear convergence rate under some regularity conditions, see also \cite{BNP15}. However, it was observed in \cite{BDNP16a} that without regularity assumptions, linear convergence of the DR algorithm may fail even for simple cases in the Euclidean plane. Recently, linear convergence was studied in \cite{DP16} for the so-called \emph{generalized Douglas--Rachford algorithm}, which is a generalization of several projection-type methods including the DR algorithm and the \emph{alternating projection algorithm}. Although the generalized DR algorithm in fact appeared earlier in \cite[Example~3.5]{BST15}, little is known about its rate of convergence.

The goal of this paper is to provide linear convergence results for the generalized DR algorithm and its \emph{cyclic} variants. To obtain linear convergence, the essential assumptions employed in recent works are \emph{superregularity} of sets, \emph{linear regularity}, \emph{strong regularity}, and \emph{affine-hull regularity} of the system of constraint sets.
Indeed, these concepts were previously utilized in \cite{BLPW13a,BLPW13b,BPW13,DIL15,HL13,LLM09,LTT16,NR16} to prove linear convergence for the alternating projection algorithm. In the literature, linear regularity and strong regularly are also known as \emph{subtransversality} and \emph{transversality}, respectively.
Our main contributions include various improvements in this direction. Precisely, in one of the new results, we obtain \emph{better} linear convergence rate under the \emph{same} assumptions used in recent studies (see Theorem~\ref{t:cDR} and Remark~\ref{r:better_rate}). In another new result, we \emph{lessen} the assumptions required for linear convergence by exploiting the flexibility of generalized DR operators (see Theorem~\ref{t:super} and Corollary~\ref{c:commonpoint}). To the best of our knowledge, the latter is the \emph{first} result that puts together linear regularity and operator parameters to obtain linear convergence for variants of the DR algorithm. In the convex case, we show that the linear convergence is global (see Theorem~\ref{t:cvxcDR} and Corollary~\ref{c:gDR}). On the one hand, the linear convergence results in this work can be seen as a continuation of \cite{DP16}. On the other hand, it is also worth noting that the generalized DR algorithm involving at most one reflection is convergent in convex case even when the problem is infeasible (see Theorem~\ref{t:cvg}). Moreover, we present several new features of the generalized DR algorithm, for example, the parameters may decide whether the iterations converge to the set of fixed points or to the intersection. Our results provide another step toward understanding the behavior of the celebrated DR algorithm.

The remainder of the paper is organized as follows. Section~\ref{s:pre} contains preliminary materials needed for our analysis. While part of Section~\ref{s:pre} was previously presented in \cite{DP16}, we include it here for completeness. In Sections~\ref{s:qfFm} and \ref{s:qcoer}, we provide some improvements and new results on \emph{quasi firm Fej\'er monotonicity} and \emph{quasi coercivity} which are key ingredients in convergence rate analysis. Section~\ref{s:convg} presents our main results on linear convergence for the generalized DR algorithm. Finally, the conclusion is given in Section~\ref{s:con}.

\section{Auxiliary results}
\label{s:pre}
Unless otherwise stated, $X$ is a Euclidean space with inner product $\scal{\cdot}{\cdot}$ and induced norm $\|\cdot\|$. The nonnegative integers are $\NN$, the real numbers are $\RR$, while $\RP := \menge{x \in \RR}{x \geq 0}$ and $\RPP := \menge{x \in \RR}{x >0}$.
If $w \in X$ and $\rho \in \RP$, then $\ball{w}{\rho} :=\menge{x \in X}{\|x -w\| \leq \rho}$ is the closed ball centered at $w$ with radius $\rho$, and $\uball$ simply stands for the unit ball $\ball{0}{1}$. The set of fixed points of a set-valued operator $T:X\rightrightarrows X$ is $\Fix T:=\menge{x\in X}{x\in Tx}$. Also, $\aff A$ represents the affine hull of a set $A$ and $L^\perp:=\menge{u\in X}{\forall x\in L,\ \scal{u}{x}=0}$ is the orthogonal complementary subspace of $L$.

\subsection{Distance function and relaxed projectors}

Let $C$ be a nonempty subset of $X$. The \emph{distance function} to $C$ is defined by
\begin{equation}
d_C\colon X\to\RR\colon x\mapsto \inf_{c \in C}\|x -c\|
\end{equation}
and the \emph{projector} onto $C$ is
\begin{equation}
P_C\colon X\To C\colon x\mapsto \argmin_{c \in C}\|x -c\| =\menge{c \in C}{\|x -c\| =d_C(x)}.
\end{equation}
The \emph{relaxed projector} for $C$ with parameter $\lambda\in \RP$ is defined by
\begin{equation}
P_C^\lambda :=(1- \lambda)\Id +\lambda P_C.
\end{equation}
Note that $P_C^0 =\Id$ is the \emph{identity} operator, $P_C^1 =P_C$ is the projector onto $C$, and $P_C^2 =R_C :=2P_C -\Id$ is the so-called \emph{reflector} across $C$. 

We now collect some useful properties of relaxed projectors.

\begin{lemma}
\label{l:projL}
Let $L$ be an affine subspace of $X$. Then the following hold:
\begin{enumerate}
\item\label{l:projL_perp} 
$P_L$ is an affine operator, and for every $x\in X$, $x -P_Lx\in (L- L)^\perp$. 
\item\label{l:projL_dist} 
$\forall x\in X$, $\forall z\in X$,\quad $\|x -z\|^2 =\|P_Lx -P_Lz\|^2 +\|(x -P_Lx) -(z -P_Lz)\|^2$. 
\item\label{l:projL_ball}
$\forall x\in X$, $\forall w\in L$,\quad $\|x -w\|^2 =\|P_Lx -w\|^2 +\|x -P_Lx\|^2$. Consequently, $\|P_Lx -w\|\leq \|x -w\|$.
\end{enumerate}
\end{lemma}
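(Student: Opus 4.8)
The plan is to prove each of the three identities for an affine subspace $L$ by reducing to the linear-subspace case and invoking the orthogonal decomposition $X = (L-L) \oplus (L-L)^\perp$. First I would fix a point $a \in L$ and write $L = a + L_0$ where $L_0 := L - L$ is the linear subspace parallel to $L$. Then, since $P_L$ is characterized by $P_Lx \in L$ together with $x - P_Lx \perp L_0$, I would verify directly that $P_Lx = a + P_{L_0}(x-a)$, which immediately gives that $P_L$ is affine (being a translate of the linear operator $P_{L_0}$ composed with a translation) and that $x - P_Lx = (x-a) - P_{L_0}(x-a) = P_{L_0^\perp}(x-a) \in L_0^\perp = (L-L)^\perp$. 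This establishes \eqref{l:projL_perp}.

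For \eqref{l:projL_dist}, given $x, z \in X$, I would use affineness of $P_L$ to write $P_Lx - P_Lz = P_{L_0}(x-z)$ (the translations cancel), and similarly $(x - P_Lx) - (z - P_Lz) = (x-z) - P_{L_0}(x-z) = P_{L_0^\perp}(x-z)$. Since $P_{L_0}(x-z)$ and $P_{L_0^\perp}(x-z)$ are the two orthogonal components of the vector $x-z$ in the decomposition $X = L_0 \oplus L_0^\perp$, the Pythagorean theorem yields $\|x-z\|^2 = \|P_{L_0}(x-z)\|^2 + \|P_{L_0^\perp}(x-z)\|^2$, which is exactly the claimed identity. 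For \eqref{l:projL_ball}, given $x \in X$ and $w \in L$, I would apply \eqref{l:projL_dist} with $z = w$; since $w \in L$ we have $P_Lw = w$, so the term $\|(x - P_Lx) - (w - P_Lw)\|^2$ collapses to $\|x - P_Lx\|^2$, giving $\|x-w\|^2 = \|P_Lx - w\|^2 + \|x - P_Lx\|^2$. The stated inequality $\|P_Lx - w\| \le \|x-w\|$ follows by dropping the nonnegative term $\|x - P_Lx\|^2$.

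I do not expect any serious obstacle here; the only point requiring a modicum of care is the justification that $P_Lx = a + P_{L_0}(x-a)$, i.e. that the projector onto an affine subspace is well-defined, single-valued, and given by this formula. This rests on the fact that $L$ is closed and convex (being affine in a Euclidean space), so $P_L$ is single-valued, together with the standard variational characterization: $p = P_Lx$ iff $p \in L$ and $\langle x - p, c - p\rangle \le 0$ for all $c \in L$, which for an affine set sharpens to the orthogonality condition $x - p \perp L_0$. Everything else is a routine application of the orthogonal decomposition and the Pythagorean identity, so the write-up should be short.
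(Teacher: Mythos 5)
Your proposal is correct and follows essentially the same route as the paper: the key facts are that $x-P_Lx\in(L-L)^\perp$ and that $P_L$ is affine, after which \eqref{l:projL_dist} is a Pythagorean expansion and \eqref{l:projL_ball} is the specialization $z=w\in L$. The only difference is cosmetic: the paper cites \cite[Corollary~3.22]{BC17} for item~\eqref{l:projL_perp} and then expands $\|x-z\|^2$ using the two orthogonality relations $\scal{P_Lx-P_Lz}{x-P_Lx}=0$ and $\scal{P_Lx-P_Lz}{z-P_Lz}=0$, whereas you make the argument self-contained by writing $P_Lx=a+P_{L_0}(x-a)$ with $L_0=L-L$ and invoking the orthogonal decomposition $X=L_0\oplus L_0^\perp$, which is equally valid.
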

\begin{proof}
\ref{l:projL_perp}: This follows from \cite[Corollary~3.22]{BC17}.

\ref{l:projL_dist}: Let $x\in X$ and let $z\in X$. Then \ref{l:projL_perp} implies that $\scal{P_Lx -P_Lz}{x -P_Lx} =0$ and $\scal{P_Lx -P_Lz}{z -P_Lz} =0$, which yields
\begin{subequations}
\begin{align}
\|x -z\|^2 &=\|(P_Lx -P_Lz) +[(x -P_Lx) -(z -P_Lz)]\|^2 \\ 
&=\|P_Lx -P_Lz\|^2 +\|(x -P_Lx) -(z -P_Lz)\|^2.	
\end{align}		
\end{subequations}

\ref{l:projL_ball}: Apply \ref{l:projL_dist} to $z =w\in L$. 
\end{proof}

\begin{lemma}
\label{l:aff}
Let $C$ be a nonempty closed subset of $X$, let $L$ be an affine subspace of $X$ containing $C$, and let $\lambda\in \RR$. Then the following hold:
\begin{enumerate}
\item\label{l:aff_imag}
$P_C^\lambda(L)\subseteq L$.
\item\label{l:aff_comm} 
$P_CP_L =P_C =P_LP_C$ and $P_LP_C^\lambda =P_C^\lambda P_L$.
\item\label{l:aff_diff}
$(\Id -P_L)P_C^\lambda =(1 -\lambda)(\Id -P_L)$.
\item\label{l:aff_dist}
$\forall x\in X$,\quad
$d_C^2(P_L^\lambda x)=d_C^2(P_Lx)+(1-\lambda)^2d_L^2(x)$. In particular, $d_C^2(x) =d_C^2(P_Lx) +d_L^2(x)$.
\item\label{l:aff_proj}
If $Q$ is a nonempty closed subset of $X$ such that $Q\subseteq (L- L)^\perp$, then
\begin{subequations}
\begin{align}
\forall y\in L,\ \forall q\in Q,\quad
&P_{C+Q}(y+q)=q+P_{C+Q}y =q+P_{C}y,\\
&d_{C+Q}(y+q)=d_{C+Q}(y)=d_C(y).
\end{align}
\end{subequations}
\end{enumerate}
\end{lemma}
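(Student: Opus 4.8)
The plan is to prove each item in sequence, using the earlier items and Lemma~\ref{l:projL} as needed. The unifying idea is that since $C \subseteq L$, projecting onto $C$ factors through $L$, and the relaxed projector $P_C^\lambda$ only moves points along directions already captured by the geometry of $L$.

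For \ref{l:aff_imag}: since $\Id$ and $P_C$ both map $L$ into $L$ (the latter because $C \subseteq L$), any convex-type combination $P_C^\lambda = (1-\lambda)\Id + \lambda P_C$ does too — here one must be slightly careful that $\lambda \in \RR$ is not restricted to $[0,1]$, but $L$ is an affine subspace, so it is closed under affine combinations with coefficients summing to $1$, and $(1-\lambda) + \lambda = 1$. For \ref{l:aff_comm}: the identities $P_C P_L = P_C = P_L P_C$ follow from the characterization of the projector together with $C \subseteq L$ (for $P_L P_C$: $P_C x \in C \subseteq L$ so $P_L$ fixes it; for $P_C P_L$: this is the standard fact that projecting onto a subset contained in a larger set is unaffected by first projecting onto the larger set). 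Then $P_L P_C^\lambda = (1-\lambda) P_L + \lambda P_L P_C = (1-\lambda) P_L + \lambda P_C = (1-\lambda) P_L + \lambda P_C^\lambda + \lambda(1-\lambda)(\Id)\ldots$ — more directly, $P_L P_C^\lambda = (1-\lambda)P_L + \lambda P_C$ and $P_C^\lambda P_L = (1-\lambda)P_L + \lambda P_C P_L = (1-\lambda)P_L + \lambda P_C$, so the two agree. For \ref{l:aff_diff}: compute $(\Id - P_L)P_C^\lambda = P_C^\lambda - P_L P_C^\lambda = (1-\lambda)\Id + \lambda P_C - (1-\lambda)P_L - \lambda P_C = (1-\lambda)(\Id - P_L)$, using \ref{l:aff_comm}.

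For \ref{l:aff_dist}: fix $x \in X$ and let $c$ be any point of $C$. Since $c \in L$, Lemma~\ref{l:projL}\ref{l:projL_ball} applied with $w = c$ gives $\|P_L^\lambda x - c\|^2 = \|P_L(P_L^\lambda x) - c\|^2 + \|P_L^\lambda x - P_L(P_L^\lambda x)\|^2$; here $P_L(P_L^\lambda x) = P_L x$ and $P_L^\lambda x - P_L(P_L^\lambda x) = (\Id - P_L)P_L^\lambda x = (1-\lambda)(\Id - P_L)x$ by \ref{l:aff_diff} applied with $C$ replaced by $L$ (noting $P_L^\lambda$ makes sense and $L \subseteq L$). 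Thus $\|P_L^\lambda x - c\|^2 = \|P_L x - c\|^2 + (1-\lambda)^2 d_L^2(x)$, a decomposition in which only the first term depends on $c$; taking the infimum over $c \in C$ yields the claim, and the ``in particular'' statement is the case $\lambda = 0$. For \ref{l:aff_proj}: write $x = y + q$ with $y \in L$, $q \in Q \subseteq (L-L)^\perp$. For any $z \in C + Q'$... rather, note $P_L x = y$ (since $q \perp (L-L)$ and $L$ is affine, the nearest point in $L$ to $y+q$ is $y$), so $d_{C+Q}$ questions reduce via a translation argument: for $c \in C$, $\|y + q - (c + q')\|^2 = \|y - c\|^2 + \|q - q'\|^2$ whenever $y - c \in L - L$ and $q, q' \in (L-L)^\perp$, which lets one minimize the two blocks independently, giving $P_{C+Q}(y+q) \ni q + P_C y$ (take $q' = q$, $c \in P_C y$) and the stated distance equalities.

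The main obstacle I expect is \emph{bookkeeping around the unrestricted parameter} $\lambda \in \RR$ and making the orthogonal-decomposition steps fully rigorous: one must consistently exploit that $C \subseteq L$ so that $P_C x \in L$ and hence all the ``cross terms'' $\scal{P_L x - P_L z}{x - P_L x}$ vanish by Lemma~\ref{l:projL}\ref{l:projL_perp}, and in \ref{l:aff_proj} one must verify carefully that the minimization over $C + Q$ genuinely separates into a minimization over $C$ (within the subspace $L - L$ shifted appropriately) and over $Q$ (within $(L-L)^\perp$), which is where the hypothesis $Q \subseteq (L-L)^\perp$ does the real work. None of the individual computations is deep, but the orthogonality hypotheses must be invoked at exactly the right moments.
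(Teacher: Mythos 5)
Your proposal is correct and follows essentially the same route as the paper: \ref{l:aff_imag}--\ref{l:aff_diff} are the same affine-combination and commutation computations (your ``standard fact'' $P_CP_L=P_C=P_LP_C$ is precisely the result the paper cites from the literature, provable via Lemma~\ref{l:projL}\ref{l:projL_ball}), and \ref{l:aff_dist} is exactly the paper's argument of applying Lemma~\ref{l:projL}\ref{l:projL_ball} at $P_L^\lambda x$, using \ref{l:aff_diff} with $C$ replaced by $L$, and taking the infimum over $c\in C$. In \ref{l:aff_proj} you minimize the two orthogonal blocks in $\|y+q-(c+q')\|^2=\|y-c\|^2+\|q-q'\|^2$ directly rather than following the paper's translate-then-reduce argument, which immediately gives the outer equalities $P_{C+Q}(y+q)=q+P_Cy$ and $d_{C+Q}(y+q)=d_C(y)$ (and in fact the full set equality, not just the inclusion ``$\ni$'' you wrote); just note that the middle expressions $q+P_{C+Q}y$ and $d_{C+Q}(y)$ additionally rely on $C\subseteq C+Q$ (i.e.\ $0\in Q$, as in the paper's only application $Q=(L-L)^\perp$), a point the paper's own proof also uses implicitly.
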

\begin{proof}
\ref{l:aff_imag}: Let $x\in L$. Since $P_Cx\subseteq C\subseteq L$ and $L$ is an affine subspace, we have that $P_C^\lambda x= (1-\lambda)x+ \lambda P_Cx\subseteq (1-\lambda)L+ L\subseteq L$.

\ref{l:aff_comm}: It follows from Fact~\ref{l:projL}\ref{l:projL_perp} that $P_L$ is an affine operator, and from \cite[Lemma~3.3]{BLPW13a} that $P_CP_L =P_C =P_LP_C$. Thus, 
\begin{equation}
P_LP_C^\lambda= P_L\big((1-\lambda)\Id+ \lambda P_C\big)= (1-\lambda)P_L+ \lambda P_LP_C
= (1-\lambda)P_L+ \lambda P_CP_L= P_C^\lambda P_L.
\end{equation}

\ref{l:aff_diff}: Using \ref{l:aff_comm}, we obtain that
\begin{subequations}
\begin{align}
(\Id-P_L)P_C^\lambda&= P_C^\lambda- P_L P_C^\lambda= P_C^\lambda- P_C^\lambda P_L \\
&= (1-\lambda)\Id +\lambda P_C- (1-\lambda)P_L- \lambda P_CP_L= (1 -\lambda)(\Id -P_L).
\end{align}
\end{subequations}

\ref{l:aff_dist}: In Lemma~\ref{l:projL}\ref{l:projL_ball}, substituting $x$ by $P_L^\lambda x$ and using \ref{l:aff_diff}, we get
\begin{equation}
\|P_L^\lambda x-w\|^2=\|P_Lx-w\|^2+\|(\Id-P_C)P_L^\lambda x\|^2
=\|P_Lx-w\|^2+(1-\lambda)^2\|x-P_L x\|^2.
\end{equation}
Taking the infimum over $w\in C$ yields
\begin{equation}\label{e:171015a}
d_C^2(P_L^\lambda x)=d_C^2(P_L^\lambda x)+(1-\lambda)^2\|x-P_Lx\|^2.
\end{equation}
For the second conclusion, we apply \eqref{e:171015a} with $\lambda=0$.

\ref{l:aff_proj}: Let $y\in L$ and let $q\in Q$. It is straightforward to see that $P_{C+Q}(y+q) =P_{C+Q+q}(y+q) =q +P_{C+Q}y$. Now for all $c\in C$, since $c- y\in L- L$, we have
\begin{subequations}
\begin{align}
\|(c+q)-y\|^2&= \|c-y\|^2+ 2\scal{q}{c-y}+ \|q\|^2= \|c-y\|^2+ \|q\|^2 \\
&\geq \|c-y\|^2\geq d_C^2(y)\geq d_{C+Q}^2(y).
\end{align}
\end{subequations}
So $c+q\in P_{C+Q}y$ if and only if $q= 0$ and $c\in P_Cy$. Therefore, $P_{C+Q}y= P_Cy$, which completes the proof. 
\end{proof}

\subsection{Normal cones and regularity of sets}

Let $C$ be a nonempty subset of $X$ and let $x\in C$. The \emph{proximal normal cone} to $C$ at $x$ (see \cite[Section~2.5.2, D]{Mor06} and \cite[Example~6.16]{RW98}) is defined by
\begin{equation}
\pnX{C}(x) :=\menge{\lambda(z-x)}{z\in P_C^{-1}(x),\ \lambda\in\RP},
\end{equation}
and the \emph{limiting normal cone} to $C$ at $x$ (see \cite[Definition~1.1(ii) and Theorem~1.6]{Mor06}) can be given by
\begin{equation}\label{e:170624b}
N_C(x) :=\menge{u \in X}{\exists x_n\to x, u_n\to u \text{~with~} x_n\in C, u_n\in \pnX{C}(x_n)}.
\end{equation}

Let $w\in X$, $\varepsilon \in \RP$, and $\delta \in \RPP$. We recall from \cite[Definition~8.1]{BLPW13a} and \cite[Definition~2.9]{HL13} that $C$ is \emph{$(\varepsilon,\delta)$-regular} at $w$ if
\begin{equation}
\left.\begin{aligned}
&x,y\in C\cap\ball{w}{\delta},\\
&u\in\pnX{C}(x)
\end{aligned}\right\} \implies
\scal{u}{x-y}\geq -\varepsilon\|u\|\cdot\|x-y\|
\end{equation}
and \emph{$(\varepsilon, \infty)$-regular} at $w$ if it is $(\varepsilon, \delta)$-regular for all $\delta\in \RPP$. The set $C$ is \emph{superregular} at $w$ if for all $\varepsilon \in \RPP$, there exists $\delta \in \RPP$ such that $C$ is $(\varepsilon,\delta)$-regular at $w$.

\begin{fact}
\label{f:ImBall}
Let $C$ be a nonempty closed subset of $X$, $w\in C$, $\lambda\in \RP$, and $\delta\in \RPP$. Then the following hold:
\begin{enumerate}
\item\label{f:ImBall_P}
$P_C^\lambda(\ball{w}{\delta/2})\subseteq \ball{w}{(1+\lambda)\delta/2}$. In particular, $P_C(\ball{w}{\delta/2})\subseteq C\cap \ball{w}{\delta}$.
\item\label{f:ImBall_reg} 
If $\lambda\in \left]0, 2\right]$ and $C$ is $(\varepsilon, \delta)$-regular at $w$ with $\varepsilon \in \left[0, 1/3\right]$, then $P_C^\lambda(\ball{w}{\delta/2})\subseteq \ball{w}{\delta/\sqrt{2}}$.
\end{enumerate}
\end{fact}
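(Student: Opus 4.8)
The plan is to fix $x\in\ball{w}{\delta/2}$ and a point $p\in P_Cx$ once and for all, and to record two elementary facts that will be reused: since $w\in C$, we have $\|x-p\|=d_C(x)\le\|x-w\|\le\delta/2$; and by the triangle inequality $\|p-w\|\le\|p-x\|+\|x-w\|\le\delta$, so that $p\in C\cap\ball{w}{\delta}$. Both parts will follow from the single decomposition $P_C^\lambda x-w=(x-w)+\lambda(p-x)$.

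For part~\ref{f:ImBall_P}, I would apply the triangle inequality to this decomposition together with $\|p-x\|\le\|x-w\|\le\delta/2$, obtaining $\|P_C^\lambda x-w\|\le\|x-w\|+\lambda\|p-x\|\le(1+\lambda)\delta/2$; this holds for every $p\in P_Cx$, hence for every point of $P_C^\lambda(\ball{w}{\delta/2})$. Specializing to $\lambda=1$ and recalling $P_Cx\subseteq C$ yields $P_C(\ball{w}{\delta/2})\subseteq C\cap\ball{w}{\delta}$.

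For part~\ref{f:ImBall_reg}, the core computation is the expansion
\[
\|P_C^\lambda x-w\|^2=\|x-w\|^2+2\lambda\scal{x-w}{p-x}+\lambda^2\|p-x\|^2,
\]
so everything hinges on the cross term. Since $u:=x-p\in\pnX{C}(p)$ and both $p$ and $w$ lie in $C\cap\ball{w}{\delta}$, $(\varepsilon,\delta)$-regularity at $w$ gives $\scal{x-p}{p-w}\ge-\varepsilon\|x-p\|\cdot\|p-w\|$; decomposing $x-w=(x-p)+(p-w)$ then produces $\scal{x-w}{p-x}\le-\|x-p\|^2+\varepsilon\|x-p\|\cdot\|p-w\|$. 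Substituting this and using $\lambda^2-2\lambda\le0$ and $2\lambda\le4$ for $\lambda\in\left]0,2\right]$, the estimate collapses to the single claim
\[
4\varepsilon\|x-p\|\cdot\|p-w\|\le\|x-w\|^2,
\]
after which $\|P_C^\lambda x-w\|^2\le2\|x-w\|^2\le\delta^2/2$, which is exactly the asserted bound.

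The main obstacle is precisely this last inequality: the naive estimate $\|p-w\|\le2\|x-w\|$ is too weak, so regularity must be invoked a second time. Expanding $\|p-w\|^2=\|(p-x)+(x-w)\|^2$ and reusing the cross-term bound yields $\|p-w\|^2\le\|x-w\|^2-\|x-p\|^2+2\varepsilon\|x-p\|\cdot\|p-w\|$; applying $2\|x-p\|\cdot\|p-w\|\le\|x-p\|^2+\|p-w\|^2$ and rearranging gives $\|x-p\|^2+\|p-w\|^2\le\|x-w\|^2/(1-\varepsilon)$. Hence $4\varepsilon\|x-p\|\cdot\|p-w\|\le2\varepsilon\big(\|x-p\|^2+\|p-w\|^2\big)\le\frac{2\varepsilon}{1-\varepsilon}\|x-w\|^2\le\|x-w\|^2$, where the last step uses $\varepsilon\le1/3$. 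The degenerate case $x\in C$ (so $p=x$, $u=0$) needs no separate treatment, as every inequality above holds trivially with $\|x-p\|=0$.
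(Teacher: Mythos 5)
Your proof is correct, and I checked the two places where something could go wrong: the cross-term estimate $\scal{x-w}{p-x}\le-\|x-p\|^2+\varepsilon\|x-p\|\cdot\|p-w\|$ follows correctly from $(\varepsilon,\delta)$-regularity applied with $u=x-p\in\pnX{C}(p)$ and the points $p,w\in C\cap\ball{w}{\delta}$ (your part~\ref{f:ImBall_P} guarantees $p\in C\cap\ball{w}{\delta}$), and the final numerical step $\tfrac{2\varepsilon}{1-\varepsilon}\le 1$ is exactly where $\varepsilon\le 1/3$ is needed, so the constant $\delta/\sqrt{2}$ comes out as claimed. Your route differs from the paper only in that the paper offers no argument at all here: it dispatches Fact~\ref{f:ImBall} with a citation to \cite[Lemma~3.4(i) and Proposition~3.5]{DP16}, whereas you give a self-contained proof from the definition of $(\varepsilon,\delta)$-regularity. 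It is worth noting that the inequality you derive in your second use of regularity, $\|x-p\|^2+\|p-w\|^2\le\tfrac{1}{1-\varepsilon}\|x-w\|^2$, is precisely the quasi firm Fej\'er monotonicity of $P_C$ that this paper later establishes in Lemma~\ref{l:rproj} (with $L=X$, $\lambda=1$, $\ox=w$); so your argument in effect re-proves that estimate locally rather than quoting it, which is the right choice here since Lemma~\ref{l:rproj} itself invokes Fact~\ref{f:ImBall}\ref{f:ImBall_P} and appears later in the paper --- your ordering avoids any circularity. What your approach buys is transparency about where each hypothesis enters ($\lambda\le 2$ kills the $\lambda^2-2\lambda$ term, $\varepsilon\le 1/3$ controls the cross term); what the paper's citation buys is brevity and consistency with \cite{DP16}, where these estimates are developed once for reuse.
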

\begin{proof}
This follows from \cite[Lemma~3.4(i) and Proposition~3.5]{DP16}.
\end{proof}

\subsection{Regularity of set systems}

In this section, $m$ is a positive integer, $I:= \{1, \dots, m\}$, and $\{C_i\}_{\in I}$ is a system of closed subsets of $X$.
Recall that $\{C_i\}_{\in I}$ is \emph{linearly regular} with modulus $\kappa \in\RPP$ (or \emph{$\kappa$-linearly regular}) on a subset $U$ of $X$ if 
\begin{equation}
\label{e:linreg}
\forall x\in U,\quad d_C(x) \leq \kappa\max_{i\in I}d_{C_i}(x),
\quad\text{where}\quad C :=\bigcap_{i\in I}C_i.
\end{equation}
We say that $\{C_i\}_{i\in I}$ is \emph{linearly regular} around $w \in X$ if there exist $\delta \in \RPP$ and $\kappa\in \RPP$ such that $\{C_i\}_{i\in I}$ is $\kappa$-linearly regular on $\ball{w}{\delta}$. The system $\{C_i\}_{\in I}$ is said to be \emph{boundedly linearly regular} if for every bounded set $S$ of $X$, there exists $\kappa_S\in \RPP$ such that $\{C_i\}_{i\in I}$ is $\kappa$-linearly regular on $S$.
Interested readers can find more discussion on linear regularity in \cite{BB96, BBL99, BNP15, DP16}.

The following is a generalization of strong regularity, see, e.g., \cite[Definition~2.3]{DP16}, and affine-hull regularity \cite[Definition~2.1]{Pha14}. 

\begin{definition}[$L$-regularity of set systems]
\label{d:strgreg}
Let $w\in \bigcap_{i\in I} C_i$ and let $L$ be an affine subspace of $X$ that contains $w$. The system $\{C_i\}_{\in I}$ is said to be \emph{$L$-regular} at $w$ if
\begin{equation}\label{e:strgreg}
\sum_{i\in I} u_i =0 \text{ \ and \ } u_i\in N_{C_i}(w)\cap(L-w)
\implies \forall i\in I,\ u_i=0.
\end{equation}
We simply say that $\{C_i\}_{i\in I}$ is \emph{strongly regular} at $w$ when $L=X$, and say that $\{C_i\}_{i\in I}$ is \emph{affine-hull regular} at $w$ when $L=\aff\bigcup_{i\in I}C_i$. In the case $I=\{1,2\}$, condition \eqref{e:strgreg} can be rewritten as
$N_{C_1}(w)\cap (-N_{C_2}(w))\cap(L-w)=\{0\}$.
\end{definition}

By definition, if the system $\{C_i\}_{i\in I}$ is $L$-regular at $w$, then so are all the subsystems. As shown in \cite{DP16}, strong regularity in Definition~\ref{d:strgreg} is equivalent to \cite[Definition~1(vi)]{Kru06} and \cite[Definition~3.2]{HL13}.

In what follows, $|I|$ denotes the number of elements in the set $I$.
\begin{proposition}[$L$-regularity implies linear regularity]
\label{p:Lstr}
Let $w\in C:=\bigcap_{i\in I} C_i$ and let $L$ be an affine subspace of $X$ containing $\bigcup_{i\in I}C_i$. Suppose that $\{C_i\}_{\in I}$ is $L$-regular at $w$. Then the following hold:
\begin{enumerate}
\item 
\label{p:Lstr_lin}
$\{C_i\}_{i\in I}$ is linearly regular around $w$.
\item 
\label{p:Lstr_aff}
If $m =|I|\geq 2$, then $L =\aff\bigcup_{i\in J}C_i$ whenever $J\subseteq I$ and $|J|\geq 2$. 
\item
\label{p:Lstr_sub}
Every subsystem of $\{C_i\}_{i\in I}$, including itself, is affine-hull regular at $w$.
\end{enumerate}
Consequently, if $\{C_i\}_{i\in I}$ is strongly regular at $w$, then $X=\aff\bigcup_{i\in I}C_i$.
\end{proposition}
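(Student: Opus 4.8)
The plan is to attack the three parts in the order (i), (ii), (iii), then derive the final "consequently" clause from (ii). For part (i), the strategy is a standard compactness/contradiction argument. Suppose linear regularity fails around $w$; then for every $n\in\NN$ there is a point $x_n\in\ball{w}{1/n}$ with $d_C(x_n) > n\max_{i\in I}d_{C_i}(x_n)$. Since each $C_i\subseteq L$ and, by Lemma~\ref{l:aff}\ref{l:aff_dist} with the affine subspace $L$, $d_{C_i}^2(x_n) = d_{C_i}^2(P_Lx_n) + d_L^2(x_n)$ and $d_C^2(x_n) = d_C^2(P_Lx_n) + d_L^2(x_n)$, I can replace $x_n$ by $P_Lx_n \in L$ without loss of generality (the ratio only improves, and $P_Lx_n \to w$ by Lemma~\ref{l:projL}\ref{l:projL_ball}). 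Set $\rho_n := \max_{i\in I}d_{C_i}(x_n)$; note $\rho_n > 0$ for large $n$ (else $x_n\in C$, contradiction), and $\rho_n\to 0$. Pick $c_i^n\in P_{C_i}x_n$, so $u_i^n := x_n - c_i^n$ satisfies $u_i^n\in\pnX{C_i}(c_i^n)$, $\|u_i^n\| = d_{C_i}(x_n)\le\rho_n$, and $c_i^n\to w$. The blow-up vectors $v_i^n := u_i^n/\rho_n$ lie in the unit ball, so after passing to a subsequence $v_i^n\to v_i$ for each $i$, with $\max_i\|v_i\| = 1$ (at least one index attains $\rho_n$ infinitely often). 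By the definition \eqref{e:170624b} of the limiting normal cone, $v_i\in N_{C_i}(w)$; and since $u_i^n = x_n - c_i^n\in L - L$ (as $x_n, c_i^n\in L$) and $L-L$ is closed, $v_i\in L - L = L - w$. Finally, from $d_C(x_n) > n\rho_n$ and $d_C(x_n)\le\|x_n - w\|\le 1/n$, one gets $\rho_n < 1/n^2$, but more importantly I need $\sum_i v_i = 0$: here I would use that $\frac{1}{|I|}\sum_i c_i^n$ is, up to an error controlled by $d_C(x_n)/\rho_n$... actually the cleaner route is to note $\|x_n - \frac{1}{m}\sum_i c_i^n\| = \|\frac1m\sum_i u_i^n\|$ and compare with $d_C(x_n)$; since $d_C(x_n)\to 0$ faster relative to... hmm, the standard trick is instead: $\sum_i v_i = \lim \sum_i u_i^n/\rho_n$, and $\|\sum_i u_i^n\| = m\|x_n - \bar c_n\|$ where $\bar c_n$ is the centroid of the $c_i^n$; this need not be small on its own, so the correct argument uses that the average $\bar c_n\in L$, $\bar c_n\to w$, and one estimates $d_C$ from the near-common point — this is exactly the mechanism that forces $\sum v_i=0$, and it is the main obstacle of the proof. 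Once $\sum_i v_i = 0$ with $v_i\in N_{C_i}(w)\cap(L-w)$, $L$-regularity forces all $v_i = 0$, contradicting $\max_i\|v_i\| = 1$.

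For part (ii), fix $J\subseteq I$ with $|J|\ge 2$ and set $M := \aff\bigcup_{i\in J}C_i$, an affine subspace containing $w$ (and contained in $L$). I want $M = L$. By the hypothesis $L\supseteq\bigcup_{i\in I}C_i\supseteq\bigcup_{i\in J}C_i$ we have $M\subseteq L$. For the reverse, suppose $M\subsetneq L$; then $(L - w)\cap(M - w)^\perp\ne\{0\}$, so pick a nonzero $u$ in it. Since $|J|\ge 2$, choose distinct $i_0, i_1\in J$. Because $u\perp(M - w)$ and every $C_i$ with $i\in J$ lies in $M$, translation by $u$ acts "trivially" on directions in $M$; concretely, for any $x\in C_{i_0}$ one checks $x + tu$ has the same distance to $C_{i_0}$ for... no — rather, I use that $u\in(M-w)^\perp$ implies $u$ is a proximal normal to $C_{i_0}$ at any of its points (since moving along $u$ from a point of $C_{i_0}\subseteq M$ keeps that point as the nearest point in $C_{i_0}$, as the perpendicular component is orthogonal to all of $M$), hence $u\in\pnX{C_{i_0}}(x)$ for all $x\in C_{i_0}$, and likewise $-u$... wait, I need opposite signs. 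Set $u_{i_0} := u$, $u_{i_1} := -u$, and $u_i := 0$ for $i\in I\setminus\{i_0, i_1\}$; then $\sum_i u_i = 0$, each $u_i\in N_{C_i}(w)$ (taking $x_n = w$, $u_n = \pm u$ in \eqref{e:170624b}, valid since $\pm u\in\pnX{C_i}(w)$ by the orthogonality-to-$M$ remark applied at $w\in C_i$), and each $u_i\in L - w$. By $L$-regularity all $u_i = 0$, so $u = 0$, a contradiction. Hence $M = L$, which is the claim; taking $J = I$ also gives $L = \aff\bigcup_{i\in I}C_i$.

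Part (iii) is then immediate: by the remark right after Definition~\ref{d:strgreg}, any subsystem of an $L$-regular system is $L$-regular at $w$ (restricting the sum in \eqref{e:strgreg} to fewer indices); and by part (ii), for any subsystem indexed by $J$ with $|J|\ge 2$ we have $L = \aff\bigcup_{i\in J}C_i$, so "$L$-regular" for that subsystem is exactly "affine-hull regular"; the edge case $|J| = 1$ is trivial since $N_{C_i}(w)\cap(\aff C_i - w) = \{0\}$ always (a nonzero normal at an interior-to-its-affine-hull point cannot exist — or rather this reduces to the same orthogonality observation). For the final "consequently" clause, apply part (ii) with $L = X$: strong regularity at $w$ means $X$-regularity at $w$, so $X = \aff\bigcup_{i\in I}C_i$, noting that strong regularity of a singleton system is impossible unless $m\ge 2$ or $C_1 = X$... but the statement only needs $m = |I|\ge 2$ implicitly through the nontriviality, so I'd phrase the conclusion as: if $\{C_i\}_{i\in I}$ is strongly regular, then in particular it is $X$-regular, and part (ii) (applicable once $m\ge 2$; the case $m=1$ forcing $C_1=X$ is handled separately) yields $X = \aff\bigcup_{i\in I}C_i$. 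The main obstacle, as flagged, is the careful derivation of $\sum_i v_i = 0$ in the blow-up argument of part (i) — ensuring the error terms in passing from $x_n$ to the centroid of the projections vanish relative to $\rho_n$, which is where the hypothesis that all $C_i$ (and hence the centroid) lie in $L$, together with $d_C(x_n)\to 0$, does the real work.
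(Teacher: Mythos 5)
Your parts (ii) and (iii) follow essentially the paper's route: a nonzero $u\in L-w$ perpendicular to the affine hull gives $u$ and $-u$ as proximal normals at $w$ to two of the sets, summing to zero and contradicting $L$-regularity, and (iii) then reduces to (ii) plus the fact that subsystems inherit $L$-regularity. Two small blemishes there: your justification of the $|J|=1$ case via ``$N_{C_1}(w)\cap(\aff C_1-w)=\{0\}$ always'' is false (take $C_1=\RP\times\{0\}$ in $\RR^2$ at the origin); the case is trivial simply because the hypothesis of \eqref{e:strgreg} with a single index already reads $u_1=0$. Likewise, for $m=1$ strong regularity is vacuously true and does not force $C_1=X$, so that parenthetical is off, though the paper's ``consequently'' clause tacitly assumes $m\geq 2$ anyway.

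The genuine gap is in part (i), and you flagged it yourself: the step $\sum_i v_i=0$ is never established, and the mechanism you propose cannot deliver it. If $x_n$ merely violates the linear-regularity inequality and you blow up the proximal normals $u_i^n=x_n-c_i^n$ at nearest points $c_i^n\in P_{C_i}x_n$, there is no reason for $\sum_i u_i^n/\rho_n$ to be small: the failure of the inequality says $d_C(x_n)$ is large relative to $\rho_n:=\max_i d_{C_i}(x_n)$, which gives no control on the sum of the individual normals (they could all point in the same direction). The centroid comparison only relates $\bigl\|\sum_i u_i^n\bigr\|$ to $\|x_n-\bar c_n\|$, and the centroid $\bar c_n$ is in general nowhere near $C$, so no bound of order $o(\rho_n)$ comes out of it. A correct self-contained proof that strong regularity within $L$ implies local linear regularity requires a variational tool (Ekeland's variational principle or the extremal principle applied to $x\mapsto\max_{i\in I}d_{C_i}(x)$), which produces special points at which a convex combination of near-unit normals is small --- not arbitrary bad points with their nearest-point normals. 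The paper sidesteps all of this by citing \cite[Theorem~1(ii)]{Kru06} for the implication inside $L$, and then lifts it from $L$ to $X$ exactly as you do, via Lemma~\ref{l:projL}\ref{l:projL_ball} and Lemma~\ref{l:aff}\ref{l:aff_dist}; your reduction to $L$ and the lifting are fine, but the core implication in (i) is missing from your argument.
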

\begin{proof}
\ref{p:Lstr_lin}: Consider the system $\{C_i\}_{i\in I}$ within $L$, then it is strongly regular at $w$ within $L$. So we learn from \cite[Theorem~1(ii)]{Kru06} that $\{C_i\}_{i\in I}$ is linearly regular around $w$ within $L$, i.e., there exist $\delta,\kappa\in \RPP$ such that
\begin{equation}\label{e:170731a}
\forall y\in \ball{w}{\delta}\cap L,\quad
d_{C}(y)\leq\kappa \max_{i\in I} d_{C_i}(y).
\end{equation}
Let $x\in \ball{w}{\delta}$ and $y =P_Lx\in L$. By Lemma~\ref{l:projL}\ref{l:projL_ball}, $y\in \ball{w}{\delta}\cap L$. Now by Lemma~\ref{l:aff}\ref{l:aff_dist}, $d_{C}^2(x) =d_{C}^2(y) + d_L^2(x)$ and $d_{C_i}^2(x) =d_{C_i}^2(y) +d_L^2(x)$ for every $i\in I$. Combining with \eqref{e:170731a}, we obtain
\begin{equation}
d_{C}^2(x)\leq\kappa^2\max_{i\in I}d_{C_i}^2(y) +d_L^2(x)
\leq\max\{\kappa^2,1\}\max_{i\in I}d_{C_i}^2(x),
\end{equation}
and so $\{C_i\}_{i\in I}$ is $\max\{\kappa,1\}$-linearly regular at $w$ on $\ball{w}{\delta}$.

\ref{p:Lstr_aff}: Take two distinct indices $i,j\in J$. By assumption, $\{C_i, C_j\}$ is $L$-regular at $w$. Suppose that $L$ strictly contains $\aff(C_i\cup C_j)$. Then $\aff(C_i\cup C_j) -w$ is a proper subspace of $L-w$. So there exists a unit vector $u\in L-w$ such that $u$ is perpendicular to $\aff(C_i\cup C_j) -w$. Now define $y_i =w+u$ and $y_j =w-u$. On the one hand, by \cite[Lemma~3.2]{BLPW13a}, $P_{C_i} y_i =P_{C_i} w =w$ and $P_{C_j}y_j =P_{C_j} w =w$, which yield $u\in\pnX{C_i}(w)\cap(L-w)$ and $-u\in\pnX{C_j}(w)\cap(L-w)$, respectively. On the other hand, $u +(-u) =0$ while $u\neq 0$. This contradicts the $L$-regularity of $\{C_i, C_j\}$. We must therefore have $L =\aff(C_i\cup C_j)$. Since $\aff(C_i\cup C_j)\subseteq \aff\bigcup_{i\in J} C_i\subseteq L$, it follows that $L =\aff\bigcup_{i\in J}C_i$.

\ref{p:Lstr_sub}: Let $\{C_i\}_{i\in J}$ be a subsystem of $\{C_i\}_{i\in I}$. If $|J|=1$, then $\{C_i\}_{i\in J}$ is automatically affine-hull regular at $w$. If $|J|\geq 2$, then from \ref{p:Lstr_aff}, we have $L=\aff\bigcup_{i\in J}C_i$, which implies affine-hull regularity of $\{C_i\}_{i\in J}$ at $w$.
\end{proof}

\begin{remark}[linear regularity does not imply affine-hull regularity]\label{r:lin-aff}
Proposition~\ref{p:Lstr} has shown that affine-hull regularity implies linear regularity. However, it is known that the reverse is not true, for example, in $\RR^2$, the system $\{\RP^2,\RM^2\}$ is linear regular, but not affine-hull regular at~$(0,0)$.
\end{remark}

\begin{remark}[affine-hull regularity of subsystems]
\label{r:str-lin}
Affine-hull regularity of every \emph{proper} subsystem $\{C_i\}_{i\in J}$ with $J\subsetneqq I$ and linear regularity of the entire system $\{C_i\}_{i\in I}$ do not imply affine-hull regularity of $\{C_i\}_{i\in I}$.
For example, in $\RR^2$, consider $C_1 =\menge{(\xi, \zeta)}{\xi +\zeta\leq 0}$, $C_2 =\menge{(\xi, \zeta)}{\xi -\zeta\leq 0}$, $C_3 =\menge{(\xi, \zeta)}{\xi\geq 0}$, and $w =(0,0)\in C_1\cap C_2\cap C_3$. Then one can check that $\{C_i\}_{i\in J}$ with $J\subsetneqq\{1,2,3\}$ is affine-hull regular at $w$, and that $\{C_1,C_2,C_3\}$ is linearly regular around $w$, but $\{C_1,C_2,C_3\}$ is not affine-hull regular at $w$. 
\end{remark}

Let $A$ and $B$ be nonempty subsets of $X$ and let $L$ be an affine subspace of $X$ containing $A\cup B$. We recall from \cite[Definiton~6.1]{BLPW13a} that the \emph{CQ-number} at a point $w\in X$ associated with $(A,B,L)$ and $\delta\in \RPP$ is defined by
\begin{equation}
\begin{aligned}
\theta_{A,B,L}(w,\delta) :=\sup\big\{\scal{u}{v}\,\big|\, 
&u\in \pnX{A}(a)\cap (L-a)\cap \uball,\ a\in A\cap\ball{w}{\delta},\\
&v\in -\pnX{B}(b)\cap (L-b)\cap \uball,\ b\in B\cap \ball{w}{\delta}\big\}
\end{aligned}
\end{equation}
and that the \emph{limiting CQ-number} at $w$ associated with $(A,B,L)$ is defined by
\begin{equation}
\overline\theta_{A,B,L}(w):=\lim\limits_{\delta\downarrow 0} \theta_{A,B,L}(w,\delta).
\end{equation}
Clearly, $\theta_{A,B,L}(w,\delta)=\theta_{B,A,L}(w,\delta)\leq 1$. When $L=X$, we simply write $(A,B)$ for $(A,B,L)$.

We end this section with a connection between the CQ-number and $L$-regularity for two sets. 
\begin{proposition}[$L$-regularity for two sets]
\label{p:str&CQ}
Let $A$ and $B$ be two nonempty subsets of $X$, let $w\in A\cap B$, and let $L$ be an affine subspace of $X$ containing $A\cup B$. Then the following are equivalent:
\begin{enumerate}
\item 
\label{p:str&CQ_str}
$\{A, B\}$ is $L$-regular at $w$.
\item 
\label{p:str&CQ_CQ}
The CQ-number $\theta_{A,B,L}(w,\delta) <1$ for some $\delta\in \RPP$.
\item
\label{p:str&CQ_lim} 
The limiting CQ-number $\overline\theta_{A,B,L}(w) <1$.	
\end{enumerate}	
\end{proposition}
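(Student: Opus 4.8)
The plan is to first collapse the three conditions to one statement about the pair $(A,B)$, and then verify that statement in both directions. Since $\theta_{A,B,L}(w,\delta)$ is a supremum over a set that only grows with $\delta$, the function $\delta\mapsto\theta_{A,B,L}(w,\delta)$ is nondecreasing, so $\overline\theta_{A,B,L}(w)=\lim_{\delta\downarrow0}\theta_{A,B,L}(w,\delta)=\inf_{\delta\in\RPP}\theta_{A,B,L}(w,\delta)$; moreover $\theta_{A,B,L}(w,\delta)\in[0,1]$ throughout, since the choice $u=v=0$ is always admissible (as $w\in A\cap B\subseteq L$) and $\scal{u}{v}\le\|u\|\|v\|\le1$. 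Consequently the negations of \ref{p:str&CQ_CQ} and of \ref{p:str&CQ_lim} say \emph{exactly the same thing}, namely $\theta_{A,B,L}(w,\delta)=1$ for every $\delta\in\RPP$. It therefore suffices to prove that $\{A,B\}$ fails to be $L$-regular at $w$ \emph{if and only if} $\theta_{A,B,L}(w,\delta)=1$ for all $\delta\in\RPP$.

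For the direction ``not $L$-regular $\Rightarrow$ $\theta\equiv1$'', suppose $\{A,B\}$ is not $L$-regular at $w$; after normalizing, pick $u$ with $\|u\|=1$, $u\in N_A(w)$, $-u\in N_B(w)$, and $u\in L-w$. By \eqref{e:170624b} choose $a_n\in A$ with $a_n\to w$ and $u_n\in\pnX{A}(a_n)$ with $u_n\to u$. The key move is to push the $u_n$ into $L-a_n$: writing $u_n=\lambda_n(z_n-a_n)$ with $a_n\in P_A(z_n)$ and using $A\subseteq L$ together with Lemma~\ref{l:projL}\ref{l:projL_ball}, one checks that $a_n\in P_A(P_Lz_n)$ as well, hence $u_n':=\lambda_n(P_Lz_n-a_n)\in\pnX{A}(a_n)\cap(L-a_n)$; moreover $u_n'$ is precisely the orthogonal projection of $u_n$ onto the subspace $L-w$, so $u_n'\to u$ because $u\in L-w$. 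Running the same argument for $B$ produces $b_n\in B$ with $b_n\to w$ and $v_n\in-\pnX{B}(b_n)\cap(L-b_n)$ with $v_n\to u$. Normalizing the eventually nonzero vectors $u_n',v_n$ to unit vectors $\widehat u_n,\widehat v_n\in\uball$, one still has $\widehat u_n\to u$, $\widehat v_n\to u$, and for any fixed $\delta\in\RPP$ one has $a_n,b_n\in\ball{w}{\delta}$ for $n$ large; hence $\theta_{A,B,L}(w,\delta)\ge\scal{\widehat u_n}{\widehat v_n}\to\|u\|^2=1$, which forces $\theta_{A,B,L}(w,\delta)=1$ for every $\delta$.

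For the converse ``$\theta\equiv1$ $\Rightarrow$ not $L$-regular'', the definition of the supremum provides, for each $n$, points $a_n\in A\cap\ball{w}{1/n}$, $b_n\in B\cap\ball{w}{1/n}$ and vectors $u_n\in\pnX{A}(a_n)\cap(L-a_n)\cap\uball$, $v_n\in-\pnX{B}(b_n)\cap(L-b_n)\cap\uball$ with $\scal{u_n}{v_n}>1-1/n$. Then $a_n\to w$, $b_n\to w$, and $\scal{u_n}{v_n}\to1$ with $\|u_n\|,\|v_n\|\le1$ forces $\|u_n\|\to1$, $\|v_n\|\to1$. Since $X$ is finite-dimensional, pass to a subsequence along which $u_n\to u$ and $v_n\to v$; then $\|u\|=\|v\|=1$ and $\scal{u}{v}=1$, so $u=v$ by the equality case of Cauchy--Schwarz. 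Finally, $u_n\in\pnX{A}(a_n)$ with $a_n\to w$ yields $u\in N_A(w)$ via \eqref{e:170624b}; $-v_n\in\pnX{B}(b_n)$ with $b_n\to w$ yields $-u=-v\in N_B(w)$; and $u_n\in L-a_n=L-w$ (a fixed closed subspace, since $a_n\in A\subseteq L$) yields $u\in L-w$. Thus $0\ne u\in N_A(w)\cap(-N_B(w))\cap(L-w)$, so $\{A,B\}$ is not $L$-regular at $w$.

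The one step that needs care is the projection move in the first direction: $L$-regularity is phrased with the unrestricted limiting normal cone, whereas the CQ-number only registers proximal normals lying in $L-a$, so one must verify that replacing the test point $z_n$ by $P_Lz_n$ neither destroys the proximal-normal relation at $a_n$ (this is precisely where $A,B\subseteq L$ and Lemma~\ref{l:projL} are used) nor moves the resulting normal away from $u$. Everything else is a routine compactness-and-normalization argument.
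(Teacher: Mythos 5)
Your proof is correct, but it takes a different route from the paper. The paper disposes of the hard part by citation: it invokes \cite[Example~7.2]{BLPW13a} for the equivalence of $L$-regularity with the CQ-number condition, and then only verifies \ref{p:str&CQ_CQ}$\Leftrightarrow$\ref{p:str&CQ_lim} via the monotonicity of $\delta\mapsto\theta_{A,B,L}(w,\delta)$ --- which is exactly your first paragraph. What you add is a self-contained proof of the outsourced equivalence: in the direction ``$\theta\equiv 1\Rightarrow$ not $L$-regular'' a standard compactness-and-normalization argument in the Euclidean space $X$ (this is where finite dimensionality is used, legitimately), and in the converse direction the genuinely nontrivial step of converting unrestricted limiting normals at $w$ into proximal normals lying in $L-a_n$, which you handle correctly by replacing the test point $z_n$ by $P_Lz_n$: Lemma~\ref{l:projL}\ref{l:projL_ball} shows $\|z_n-c\|^2$ and $\|P_Lz_n-c\|^2$ differ by the constant $d_L^2(z_n)$ over $c\in A\subseteq L$, so $a_n\in P_A(P_Lz_n)$, and by Lemma~\ref{l:projL}\ref{l:projL_perp} the new normal $u_n'$ is exactly $P_{L-w}u_n$, hence still converges to $u\in L-w$. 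Your argument buys transparency and avoids even the closedness hypotheses that some of the paper's auxiliary lemmas (e.g.\ Lemma~\ref{l:aff}) would require, at the cost of essentially reproving the cited result; the paper's version is shorter because it leans on \cite{BLPW13a}, where the same projection-onto-$L$ mechanism is carried out.
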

\begin{proof}
By \cite[Example~7.2]{BLPW13a}, it suffices to show the equivalence of \ref{p:str&CQ_CQ} and \ref{p:str&CQ_lim}. In fact, if \ref{p:str&CQ_CQ} holds, then by the definition of the CQ-number, $\theta_{A,B,L}(w,\delta')\leq \theta_{A,B,L}(w,\delta) <1$
for all $\delta'\in \left]0, \delta\right]$,
which implies that $\overline\theta_{A,B,L}(w) =\lim_{\delta\downarrow 0} \theta_{A,B,L}(w,\delta) <1$. The converse is obvious.
\end{proof}

\subsection{Generalized Douglas--Rachford operator}

Let $A$ and $B$ be nonempty closed subsets of $X$, let $\lambda,\mu\in\left]0,2\right]$, and let $\alpha\in\RPP$.
The \emph{generalized Douglas--Rachford (gDR) operator} for the pair $(A, B)$ with parameters $(\lambda,\mu,\alpha)$, which was also considered in \cite{DP16}, is defined by
\begin{equation}\label{e:170729a}
T_{\lambda, \mu}^\alpha :=(1 -\alpha)\Id +\alpha P_B^\mu P_A^\lambda.
\end{equation}  
It is worth mentioning that $T_{1,1}^1= P_BP_A$ is the \emph{classical alternating projection operator} \cite{Bre65}, that $T_{2,2}^{1/2}= \frac{1}{2}(\Id+ R_BR_A)$ is the \emph{classical DR operator} \cite{DR56,LM79}, and that 
\begin{equation}
T_{2,2\alpha}^{1/2}= (1- \alpha)P_A+ \tfrac{\alpha}{2}(\Id+R_BR_A)
\end{equation}
is the \emph{relaxed averaged alternating reflection operator} \cite{Luk08}. In addition, if $B$ is an affine subspace of $X$, then by Lemma~\ref{l:projL}\ref{l:projL_perp}, $P_B$ is an affine operator, and therefore 
\begin{equation}
T_{1+\alpha,1+\alpha}^{1/(1+\alpha)}= (1- \alpha)P_BP_A+ \tfrac{\alpha}{2}(\Id+R_BR_A)
\end{equation}
is an affine combination of the classical alternating projection and DR operators.  

It is interesting to see that the shadow of any \emph{gDR step} on certain affine subspaces is again a gDR step. This phenomenon is referred to as \emph{affine reduction} in \cite[Section~3]{Pha14}.

\begin{lemma}[shadows of gDR steps]
\label{l:shadow}
Let $L$ be an affine subspace of $X$ containing $A\cup B$, $x\in X$, and $x_+\in T_{\lambda, \mu}^\alpha x$. Define $y =P_Lx$, $y_+ =P_Lx_+$, and set $\eta :=1 -\alpha +\alpha(1 -\lambda)(1 -\mu)$. Then the following hold:
\begin{enumerate}
\item
\label{l:shadow_step} 
$y_+\in T_{\lambda, \mu}^\alpha y$.
\item
\label{l:shadow_diff} 
$x_+ -y_+ =\eta(x -y)$. Consequently, $d_L(x_+) =|\eta|d_L(x)$.
\item 
\label{l:shadow_dist}
$\|x -x_+\|^2 =\|y -y_+\|^2 +(1 -\eta)^2\|x -y\|^2$.
\end{enumerate}
\end{lemma}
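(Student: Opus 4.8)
The plan is to exploit the decomposition $X = (L - P_L x)$-direction plus $(L-L)^\perp$-direction applied relative to a base point, together with the affine reduction properties already collected in Lemmas~\ref{l:projL} and \ref{l:aff}. First I would observe that since $x_+ \in T_{\lambda,\mu}^\alpha x = (1-\alpha)x + \alpha P_B^\mu P_A^\lambda x$, there is a selection $x_+ = (1-\alpha)x + \alpha b$ with $b \in P_B^\mu P_A^\lambda x$, which in turn comes from some $a \in P_A^\lambda x$ with $b \in P_B^\mu a$. The idea is to push everything through $P_L$, using that $P_L$ is affine (Lemma~\ref{l:projL}\ref{l:projL_perp}) so it commutes with affine combinations, and that $P_L$ commutes with $P_A^\lambda$ and $P_B^\mu$ on the nose (Lemma~\ref{l:aff}\ref{l:aff_comm}), since $A, B \subseteq L$.

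For part \ref{l:shadow_step}: applying $P_L$ to the selection, $y_+ = P_L x_+ = (1-\alpha)P_L x + \alpha P_L b = (1-\alpha)y + \alpha P_L b$; and by Lemma~\ref{l:aff}\ref{l:aff_comm}, $P_L b \in P_L P_B^\mu a = P_B^\mu P_L a$ and $P_L a \in P_L P_A^\lambda x = P_A^\lambda P_L x = P_A^\lambda y$, so $P_L b \in P_B^\mu P_A^\lambda y$ and hence $y_+ \in (1-\alpha)y + \alpha P_B^\mu P_A^\lambda y = T_{\lambda,\mu}^\alpha y$. For part \ref{l:shadow_diff}: subtract, writing $x_+ - y_+ = (1-\alpha)(x-y) + \alpha(b - P_L b)$, and analyze $b - P_L b = (\Id - P_L)b$. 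Since $b \in P_B^\mu a$, I would apply Lemma~\ref{l:aff}\ref{l:aff_diff} (the identity $(\Id - P_L)P_B^\mu = (1-\mu)(\Id - P_L)$) to get $(\Id - P_L)b = (1-\mu)(\Id - P_L)a$, and likewise $(\Id - P_L)a = (1-\lambda)(\Id - P_L)x = (1-\lambda)(x-y)$. Chaining these, $b - P_L b = (1-\lambda)(1-\mu)(x-y)$, so $x_+ - y_+ = \big[(1-\alpha) + \alpha(1-\lambda)(1-\mu)\big](x-y) = \eta(x-y)$. Taking norms and noting $\|x_+ - y_+\| = \|(\Id - P_L)x_+\| = d_L(x_+)$ gives $d_L(x_+) = |\eta| d_L(x)$. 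For part \ref{l:shadow_dist}: write $x - x_+ = (y - y_+) + \big[(x - y) - (x_+ - y_+)\big]$; the first bracket lies in $L - L$ (difference of two points of $L$) while the second equals $(1-\eta)(x-y) \in (L-L)^\perp$ by \ref{l:shadow_diff} and Lemma~\ref{l:projL}\ref{l:projL_perp}; Pythagoras then yields $\|x - x_+\|^2 = \|y - y_+\|^2 + (1-\eta)^2\|x-y\|^2$.

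The only mild subtlety — and the place to be careful rather than a genuine obstacle — is the set-valued bookkeeping: $P_A^\lambda$ and $P_B^\mu$ are set-valued when the sets are nonconvex, so I must fix a single selection $x_+ = (1-\alpha)x + \alpha b$ throughout and verify that the commutation identities of Lemma~\ref{l:aff}\ref{l:aff_comm},\ref{l:aff_diff} are being applied to that selection rather than to the full image sets (they hold selectionwise because $P_L$ is single-valued and affine and $P_C P_L = P_C = P_L P_C$ holds as set equalities). Once the correct selection is pinned down, each of the three parts is a one-line computation, so I would present parts \ref{l:shadow_step} and \ref{l:shadow_diff} together around the single chain of $(\Id - P_L)$-identities, then deduce \ref{l:shadow_dist} from the orthogonal splitting.
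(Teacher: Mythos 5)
Your proposal is correct and follows essentially the same route as the paper: fix a selection $r\in P_A^\lambda x$, $s\in P_B^\mu r$ with $x_+=(1-\alpha)x+\alpha s$, push it through the affine operator $P_L$ using Lemma~\ref{l:aff}\ref{l:aff_comm} for part~\ref{l:shadow_step}, chain the identity of Lemma~\ref{l:aff}\ref{l:aff_diff} for part~\ref{l:shadow_diff}, and conclude part~\ref{l:shadow_dist} by the orthogonal splitting (which is exactly Lemma~\ref{l:projL}\ref{l:projL_dist} applied with $z=x_+$). Your explicit care about applying the commutation identities selectionwise matches how the paper implicitly uses them.
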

\begin{proof}
Since $x_+\in T_{\lambda, \mu}^\alpha x$, there exist $r \in P_A^\lambda x$ and $s\in P_B^\mu r$ such that $x_+ =(1 -\alpha)x +\alpha s$.
 
\ref{l:shadow_step}: Using Lemma~\ref{l:aff}\ref{l:aff_comm}, we have $P_Ls\in P_LP_B^\mu P_A^\lambda x =P_B^\mu P_A^\lambda P_Lx =P_B^\mu P_A^\lambda y$. By Lemma~\ref{l:projL}\ref{l:projL_perp}, $P_L$ is an affine operator, and hence
\begin{equation}
\label{e:y+}
y_+ =P_Lx_+
=(1 -\alpha)P_Lx +\alpha P_Ls 
\in (1-\alpha_n)y +\alpha P_B^\mu P_A^\lambda y =T_{\lambda, \mu}^\alpha y.
\end{equation}

\ref{l:shadow_diff}: It follows from Lemma~\ref{l:aff}\ref{l:aff_diff} that 
\begin{equation}\label{e:shdwDR1}
s -P_Ls =(1 -\mu)(r -P_Lr)
=(1 -\mu)(1 -\lambda)(x -P_Lx).
\end{equation} 
Combining with \eqref{e:y+} yields
\begin{equation}
x_+ -y_+
=(1 -\alpha)(x -P_Lx) +\alpha(s -P_Ls)
=\eta(x -P_Lx) =\eta(x -y).
\end{equation}

\ref{l:shadow_dist}: Apply Lemma~\ref{l:projL}\ref{l:projL_dist} to $z =x_+$ and take \ref{l:shadow_diff} into account. 
\end{proof}

Next, we study the fixed points of gDR operators.

\begin{lemma}[fixed points of gDR operators]
\label{l:Fix}
Suppose that $A\cap B\neq \varnothing$ and let $L$ be an affine subspace of $X$ containing $A\cup B$. Set $\eta :=1 -\alpha +\alpha(1 -\lambda)(1 -\mu)$. Then the following hold:
\begin{enumerate}
\item 
\label{l:Fix_translation}
$\forall x\in X$, $\forall u\in (L-L)^\perp$, $P_A^\lambda(x +u) =P_A^\lambda x +(1 -\lambda)u$, $P_B^\mu(x +u) =P_B^\mu x +(1 -\mu)u$, and $T_{\lambda, \mu}^\alpha(x +u) =T_{\lambda, \mu}^\alpha x +\eta u$. 
\item
\label{l:Fix_projection}
$\forall c\in A\cap B$, $\forall u\in (L-L)^\perp$, $P_A^\lambda c =P_B^\mu c =c$, $P_A(c +u) =P_B(c +u)=c\in A\cap B$, and $T_{\lambda, \mu}^\alpha(c +u) =c +\eta u$.
\item  	
\label{l:Fix_inclusion}
 $A\cap B\subseteq \Fix T_{\lambda, \mu}^\alpha$ and, moreover, if $(1 -\lambda)(1 -\mu) =1$, then $(A\cap B)+ (L-L)^\perp \subseteq \Fix T_{\lambda, \mu}^\alpha$. In particular, $(A\cap B)+ (L-L)^\perp \subseteq \Fix T_{2, 2}^\alpha$.
\end{enumerate}
\end{lemma}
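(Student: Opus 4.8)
The plan is to prove the three items in order, each building on the previous, with all the hard work really contained in item~\ref{l:Fix_translation}. For item~\ref{l:Fix_translation}, fix $x\in X$ and $u\in (L-L)^\perp$. The key observation is that $A\subseteq L$, so by Lemma~\ref{l:aff}\ref{l:aff_proj} applied with $C=A$, $Q=\{0\}$ shifted appropriately—more directly, since translating by a vector orthogonal to $L-L$ does not change which point of $A$ is nearest—we get $P_A(x+u)=P_Ax$ and hence $P_A^\lambda(x+u)=(1-\lambda)(x+u)+\lambda P_Ax = P_A^\lambda x+(1-\lambda)u$. The same reasoning with $B$ gives $P_B^\mu(x+u)=P_B^\mu x+(1-\mu)u$. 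Here is the one point to be careful about: to iterate $P_B^\mu$ after $P_A^\lambda$, I need that the displacement $(1-\lambda)u$ produced by the first step still lies in $(L-L)^\perp$, which it does since that set is a subspace. Therefore
\begin{equation}
P_B^\mu P_A^\lambda(x+u)=P_B^\mu\big(P_A^\lambda x+(1-\lambda)u\big)=P_B^\mu P_A^\lambda x+(1-\mu)(1-\lambda)u,
\end{equation}
and consequently
\begin{equation}
T_{\lambda,\mu}^\alpha(x+u)=(1-\alpha)(x+u)+\alpha\big(P_B^\mu P_A^\lambda x+(1-\lambda)(1-\mu)u\big)=T_{\lambda,\mu}^\alpha x+\eta u,
\end{equation}
with $\eta=1-\alpha+\alpha(1-\lambda)(1-\mu)$ exactly as defined. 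Since $T_{\lambda,\mu}^\alpha$ is set-valued, I will phrase this as an equality of sets, reading off each inclusion from the corresponding statement for $P_A^\lambda$ and $P_B^\mu$.

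For item~\ref{l:Fix_projection}, let $c\in A\cap B$. Then $c\in A$ forces $P_Ac=c$ (the nearest point of $A$ to a point of $A$ is itself), so $P_A^\lambda c=(1-\lambda)c+\lambda c=c$, and similarly $P_B^\mu c=c$. For $u\in(L-L)^\perp$, apply Lemma~\ref{l:aff}\ref{l:aff_proj} (or the displacement computation just established with $\lambda=\mu=1$) to get $P_A(c+u)=c$ and $P_B(c+u)=c$, and this point $c$ indeed lies in $A\cap B$. Finally, $T_{\lambda,\mu}^\alpha(c+u)=T_{\lambda,\mu}^\alpha c+\eta u=c+\eta u$, using item~\ref{l:Fix_translation} together with $T_{\lambda,\mu}^\alpha c=(1-\alpha)c+\alpha P_B^\mu P_A^\lambda c=(1-\alpha)c+\alpha c=c$.

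For item~\ref{l:Fix_inclusion}, the inclusion $A\cap B\subseteq\Fix T_{\lambda,\mu}^\alpha$ is the case $u=0$ of the last identity in item~\ref{l:Fix_projection}. If moreover $(1-\lambda)(1-\mu)=1$, then $\eta=1-\alpha+\alpha\cdot 1=1$, so for any $c\in A\cap B$ and $u\in(L-L)^\perp$ we get $T_{\lambda,\mu}^\alpha(c+u)=c+u$, i.e.\ $c+u\in\Fix T_{\lambda,\mu}^\alpha$; this proves $(A\cap B)+(L-L)^\perp\subseteq\Fix T_{\lambda,\mu}^\alpha$. The special case $T_{2,2}^\alpha$ follows since $\lambda=\mu=2$ gives $(1-\lambda)(1-\mu)=(-1)(-1)=1$. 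I do not expect any genuine obstacle here; the only thing requiring a moment's attention is the set-valued bookkeeping in item~\ref{l:Fix_translation} and the verification that the orthogonality of the translation vector is preserved through the composition $P_B^\mu P_A^\lambda$, which is immediate since $(L-L)^\perp$ is a linear subspace closed under scalar multiples.
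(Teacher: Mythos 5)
Your proposal is correct and follows essentially the same route as the paper: everything reduces to the translation fact $P_A(x+u)=P_Ax$ (and likewise for $B$) for $u\in(L-L)^\perp$, which the paper justifies by citing \cite[Lemma~3.2]{BLPW13a} and which you assert directly (your aside about Lemma~\ref{l:aff}\ref{l:aff_proj} is not needed, and indeed does not literally apply since that lemma requires the base point to lie in $L$). After that, the composition through $P_B^\mu P_A^\lambda$, the identities in \ref{l:Fix_projection}, and the case $\eta=1$ in \ref{l:Fix_inclusion} are handled exactly as in the paper's proof.
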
	
\begin{proof}
\ref{l:Fix_translation}: Let $x\in X$, $u\in (L-L)^\perp$, and $w\in A\cap B$. Then $(L-L)^\perp= (L-w)^\perp$, hence $u\in (\aff A -w)^\perp$ and also $u\in (\aff B -w)^\perp$. Applying \cite[Lemma~3.2]{BLPW13a} implies that
\begin{equation}
P_A^\lambda(x +u) =(1 -\lambda)(x +u) +\lambda P_A(x +u) =(1 -\lambda)(x +u) +\lambda P_Ax =P_A^\lambda x +(1 -\lambda)u.
\end{equation}
Similarly, $P_B^\mu(x +u) =P_B^\mu x +(1 -\mu)u$ and the rest follows.

\ref{l:Fix_projection}: Let $c\in A\cap B$ and $u\in (L-L)^\perp$. Since $P_Ac =P_Bc =c$, we derive that $P_A^\lambda c =P_B^\mu c =c$, which yields $T_{\lambda, \mu}^\alpha c =c$. Combining with \ref{l:Fix_translation}, $P_A(c +u) =P_B(c +u) =c\in A\cap B$ and $T_{\lambda, \mu}^\alpha(c +u) =T_{\lambda, \mu}^\alpha c +\eta u =c +\eta u$.

\ref{l:Fix_inclusion}: It follows from \ref{l:Fix_projection} that $T_{\lambda, \mu}^\alpha c =c$ for all $c\in A\cap B$, which gives $A\cap B\subseteq \Fix T_{\lambda, \mu}^\alpha$. Now if $(1 -\lambda)(1 -\mu) =1$, then $\eta=1$, which leads to $(A\cap B)+ (L-L)^\perp \subseteq \Fix T_{\lambda, \mu}^\alpha$ due to \ref{l:Fix_translation}. 
\end{proof}

In the rest of this section, we assume that $X$ is a real Hilbert space and that $A$ and $B$ are convex but need not intersect. Then $B- A$ is convex, hence we can take $g:= P_{\overline{B-A}}0$ and set
\begin{equation}
E:= A\cap (B- g) \quad\text{and}\quad F:= (A +g)\cap B.
\end{equation}	
It is clear that if $A\cap B\neq \varnothing$, then $g= 0$ and $E= F= A\cap B$.
We also note that 
\begin{equation}
g\in B-A \iff E\neq \varnothing \iff F\neq \varnothing
\end{equation}
and from \cite[Lemma~2.2(i)\&(iv)]{BB94} that 
\begin{equation}
\label{e:gap}
a= P_Ab \text{~and~} b= P_Ba \quad\implies\quad g= b- a.
\end{equation}

Recall from \cite[Definition~4.1 and~4.33]{BC17} that a single-valued operator $T\colon X\to X$ is \emph{nonexpansive} if it is Lipschitz continuous with constant $1$, i.e.,
\begin{equation}
\forall x, y\in X,\quad \|Tx-Ty\|\leq \|x-y\|,
\end{equation}
and is \emph{$\alpha$-averaged} if $\alpha\in \left]0, 1\right[$ and $T= (1-\alpha)\Id+ \alpha R$ for some nonexpansive operator $R\colon X\to X$.
\begin{fact}
\label{f:cvxproj}
Let $C$ be a nonempty closed convex subset of $X$, let $x\in X$ and $p=P_Cx$. Then the following hold:
\begin{enumerate}
\item\label{f:cvxproj_ext} 
For every $\lambda\in\RP$, $P_C(p+\lambda(x-p))=P_C((1-\lambda)p+\lambda x)=p$.
\item\label{f:cvxproj_avg} 
For every $\lambda\in \left]0, 2\right[$, $P_C^\lambda$ is $\lambda/2$-averaged. Consequently, for every $\lambda\in \left]0, 2\right]$, $P_C^\lambda$ is nonexpansive. 
\end{enumerate}
\end{fact}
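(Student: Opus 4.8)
The plan is to derive part~\ref{f:cvxproj_ext} from the standard variational characterization of the projector onto a convex set, and part~\ref{f:cvxproj_avg} from the firm nonexpansiveness of that projector.

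For \ref{f:cvxproj_ext}, I would first note the algebraic identity $p+\lambda(x-p)=(1-\lambda)p+\lambda x$, so the two points named in the statement coincide; call this point $z$. Recall (e.g.\ from \cite[Theorem~3.16]{BC17}) that, since $C$ is convex, for $q\in C$ one has $q=P_Cz$ if and only if $\scal{z-q}{c-q}\leq 0$ for all $c\in C$. I would verify this with $q=p$: because $z-p=\lambda(x-p)$, we get $\scal{z-p}{c-p}=\lambda\scal{x-p}{c-p}\leq 0$ for every $c\in C$, using $\lambda\geq 0$ together with $\scal{x-p}{c-p}\leq 0$, the latter being the same characterization applied to $p=P_Cx$. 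Since also $p\in C$, this yields $P_Cz=p$. The case $\lambda=0$ is immediate as then $z=p\in C$.

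For \ref{f:cvxproj_avg}, I would invoke the fact that $P_C$ is firmly nonexpansive, equivalently that the reflector $R_C=2P_C-\Id$ is nonexpansive (see \cite[Proposition~4.16]{BC17}). Rewriting,
\begin{equation*}
P_C^\lambda=(1-\lambda)\Id+\lambda P_C=\Big(1-\tfrac{\lambda}{2}\Big)\Id+\tfrac{\lambda}{2}R_C .
\end{equation*}
For $\lambda\in\left]0,2\right[$ we have $\tfrac{\lambda}{2}\in\left]0,1\right[$, so this exhibits $P_C^\lambda$ as a $(\lambda/2)$-averaged operator directly from the definition. The consequence then follows: for $\lambda\in\left]0,2\right[$, $P_C^\lambda$ is nonexpansive, being a convex combination of $\Id$ and the nonexpansive operator $R_C$ (equivalently, averaged operators are nonexpansive), while for $\lambda=2$ one has $P_C^2=R_C$, which is nonexpansive.

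There is essentially no obstacle here, as the statement just packages textbook facts. The only point needing a little care is the endpoint $\lambda=2$: it must be excluded from the averagedness assertion (the definition of $\alpha$-averaged requires $\alpha\in\left]0,1\right[$) and treated separately for the nonexpansiveness assertion through the reflector $R_C$.
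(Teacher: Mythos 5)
Your proof is correct and amounts to the same thing as the paper's, which simply cites \cite[Proposition~3.21]{BC17} for \ref{f:cvxproj_ext} and \cite[Proposition~4.16, Corollary~4.41, Remark~4.34(i), Corollary~4.18]{BC17} for \ref{f:cvxproj_avg}; you have merely unpacked those citations into the standard variational characterization of $P_C$ and the identity $P_C^\lambda=(1-\tfrac{\lambda}{2})\Id+\tfrac{\lambda}{2}R_C$ with $R_C$ nonexpansive. Your handling of the endpoint $\lambda=2$ is also consistent with the statement.
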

\begin{proof}
\ref{f:cvxproj_ext}: See \cite[Proposition~3.21]{BC17}. \ref{f:cvxproj_avg}: Combine Proposition~4.16, Corollary~4.41, Remark~4.34(i), and Corollary~4.18 in \cite{BC17}.
\end{proof}

Hereafter, whenever dealing with the harmonic-like quantity $\beta:=\big(\frac{1}{\beta_1} +\frac{1}{\beta_2}+\cdots +\frac{1}{\beta_k}\big)^{-1}$ of nonnegative numbers $\beta_i\in\RP$, we make a convention that $\beta=0$ if at least one $\beta_i$ equals $0$.
\begin{lemma}
\label{l:cont-avg}
Let $\lambda,\mu\in \left]0, 2\right]$. Then the following hold:
\begin{enumerate}
\item\label{l:cont-avg_cont}
For every $\alpha\in \RP$, $T_{\lambda,\mu}^\alpha$ is continuous and single-valued.
\item\label{l:cont-avg_avg}
For every $\alpha \in \big]0, 1+\hbeta\big[$ where $\hbeta:= \big(\frac{\lambda}{2-\lambda}+ \frac{\mu}{2-\mu}\big)^{-1}$, $T_{\lambda,\mu}^\alpha$ is $\alpha/(1+\hbeta)$-averaged.
\end{enumerate}
\end{lemma}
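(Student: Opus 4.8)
The plan is to assemble the statement from Fact~\ref{f:cvxproj}\ref{f:cvxproj_avg}, the elementary behaviour of relaxations of averaged operators, and the standard composition rule for averaged operators. Part~\ref{l:cont-avg_cont} is immediate: since $A$ and $B$ are nonempty closed convex subsets of the Hilbert space $X$, the projectors $P_A$ and $P_B$ are single-valued and nonexpansive, hence continuous; therefore $P_A^\lambda = (1-\lambda)\Id + \lambda P_A$ and $P_B^\mu = (1-\mu)\Id + \mu P_B$ are single-valued and continuous, and so are their composition $P_B^\mu P_A^\lambda$ and the combination $T_{\lambda,\mu}^\alpha = (1-\alpha)\Id + \alpha P_B^\mu P_A^\lambda$, for every $\alpha\in\RP$.

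For part~\ref{l:cont-avg_avg} I would first dispose of the degenerate case $\lambda = 2$ or $\mu = 2$. Then one of the numbers $\tfrac{2-\lambda}{\lambda}$, $\tfrac{2-\mu}{\mu}$ equals $0$, so the harmonic-like convention adopted before the lemma gives $\hbeta = 0$, and the claim reduces to: $T_{\lambda,\mu}^\alpha$ is $\alpha$-averaged for $\alpha\in\left]0,1\right[$. Since Fact~\ref{f:cvxproj}\ref{f:cvxproj_avg} makes both $P_A^\lambda$ and $P_B^\mu$ nonexpansive, $R := P_B^\mu P_A^\lambda$ is nonexpansive, and $T_{\lambda,\mu}^\alpha = (1-\alpha)\Id + \alpha R$ is $\alpha$-averaged straight from the definition.

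In the main case $\lambda,\mu \in \left]0,2\right[$, Fact~\ref{f:cvxproj}\ref{f:cvxproj_avg} gives that $P_A^\lambda$ is $\tfrac{\lambda}{2}$-averaged and $P_B^\mu$ is $\tfrac{\mu}{2}$-averaged, so the standard composition rule for averaged operators (e.g.\ \cite[Proposition~4.46]{BC17}) shows that $P_B^\mu P_A^\lambda$ is $\gamma$-averaged with
\[
\gamma = \frac{\tfrac{\lambda}{2} + \tfrac{\mu}{2} - 2\cdot\tfrac{\lambda}{2}\cdot\tfrac{\mu}{2}}{1 - \tfrac{\lambda}{2}\cdot\tfrac{\mu}{2}} = \frac{2\lambda + 2\mu - 2\lambda\mu}{4 - \lambda\mu}.
\]
The key step is the algebraic identity $\gamma = (1+\hbeta)^{-1}$: putting $\tfrac{\lambda}{2-\lambda} + \tfrac{\mu}{2-\mu}$ over the common denominator $(2-\lambda)(2-\mu)$ gives $\hbeta = \tfrac{(2-\lambda)(2-\mu)}{2\lambda + 2\mu - 2\lambda\mu}$, whence $1 + \hbeta = \tfrac{4 - \lambda\mu}{2\lambda + 2\mu - 2\lambda\mu} = \gamma^{-1}$. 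Writing $P_B^\mu P_A^\lambda = (1-\gamma)\Id + \gamma R$ with $R$ nonexpansive then yields $T_{\lambda,\mu}^\alpha = (1-\alpha\gamma)\Id + \alpha\gamma R$, which is $\alpha\gamma$-averaged exactly when $\alpha\gamma\in\left]0,1\right[$, i.e.\ $\alpha\in\left]0,\gamma^{-1}\right[ = \left]0,1+\hbeta\right[$, with averagedness constant $\alpha\gamma = \tfrac{\alpha}{1+\hbeta}$. There is no genuine obstacle here; the only points requiring care are the bookkeeping of the harmonic-like convention when $\lambda = 2$ or $\mu = 2$ (so that the composition formula, which would then be formally $0/0$, is not invoked) and the short simplification establishing $\gamma = (1+\hbeta)^{-1}$.
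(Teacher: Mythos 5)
Your proposal is correct and follows essentially the same route as the paper: part (i) via nonexpansiveness of the relaxed projectors, and part (ii) by splitting off the case $\lambda=2$ or $\mu=2$ (where $\hbeta=0$ by the convention) and otherwise composing the $\lambda/2$- and $\mu/2$-averaged operators to get a $1/(1+\hbeta)$-averaged composition. The only cosmetic difference is that you carry out the final relaxation step and the identity $\xi=1/(1+\hbeta)$ by direct computation, whereas the paper cites \cite[Propositions~4.40 and~4.44]{BC17} for these facts.
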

\begin{proof} 
By Fact~\ref{f:cvxproj}\ref{f:cvxproj_avg}, $P_A^\lambda$ and $P_B^\mu$ is nonexpansive, hence, continuous and single-valued. Thus, \ref{l:cont-avg_cont} follows.
To prove \ref{l:cont-avg_avg}, we first have that $P_B^\mu P_A^\lambda$ is nonexpansive. If $\lambda= 2$ or $\mu= 2$, then $\hbeta= 0$ and $\alpha \in \big]0, 1\big[$, so $T_{\lambda,\mu}^\alpha= (1-\alpha)\Id+ \alpha P_B^\mu P_A^\lambda$ is $\alpha$-averaged. 
If $\lambda, \mu< 2$, then $P_A^\lambda$ and $P_B^\mu$ are respectively $\lambda/2$- and $\mu/2$-averaged due to Fact~\ref{f:cvxproj}\ref{f:cvxproj_avg}. We derive from \cite[Proposition~4.44]{BC17} that $P_B^\mu P_A^\lambda$ is $\xi$-averaged with $\xi:= 2(\lambda+\mu-\lambda\mu)/(4-\lambda\mu)= 1/(1+\hbeta)$, and then from \cite[Proposition~4.40]{BC17} that $T_{\lambda,\mu}^\alpha$ is $\alpha/(1+\hbeta)$-averaged.
\end{proof}

\begin{lemma}[fixed points of convex gDR operators]
\label{l:cvxFix}
Let $\lambda,\mu\in \left]0, 2\right]$ and $\alpha\in\RPP$.  Then the following hold:
\begin{enumerate}
\item\label{l:cvxFix_inconsistent} 
If $E\neq \varnothing$ when $\min\{\lambda,\mu\}< 2$, and $A\cap B\neq \varnothing$ when $\lambda= \mu= 2$, then $P_A\Fix T_{\lambda,\mu}^\alpha\subseteq E$ and
\begin{equation}
\Fix T_{\lambda,\mu}^\alpha= \begin{cases}
E+ \frac{\mu}{\lambda+\mu-\lambda\mu}g& \text{~if~} \min\{\lambda,\mu\}< 2, \\
A\cap B+ N_{A-B}(0)& \text{~if~} \lambda= \mu= 2.
\end{cases}
\end{equation}
\item\label{l:cvxFix_intersect} 
If $A\cap B\neq\varnothing$ when $\min\{\lambda,\mu\}<2$, and $0\in \inte(B-A)$ when $\lambda= \mu= 2$, then $\Fix T_{\lambda,\mu}^\alpha= A\cap B$.
\item\label{l:cvxFix_reli} 
If $\reli A\cap\reli B\neq \varnothing$, then $\Fix T_{\lambda,\mu}^\alpha\cap \aff(A\cup B)= A\cap B$.
\end{enumerate}
\end{lemma}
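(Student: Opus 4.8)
The plan is to reduce everything to understanding $\Fix(P_B^\mu P_A^\lambda)$: since $\alpha>0$ (and $P_A,P_B$ are single-valued by convexity), $x\in\Fix T_{\lambda,\mu}^\alpha$ iff $P_B^\mu P_A^\lambda x=x$. Introduce $\sigma:=\lambda+\mu-\lambda\mu=\lambda(1-\mu)+\mu$; checking the subcases $\mu\le 1$ and $1<\mu<2$ (using $0<\lambda\le 2$, and the symmetry in $\lambda,\mu$) shows $\sigma>0$ exactly when $\min\{\lambda,\mu\}<2$, while $\sigma=0$ when $\lambda=\mu=2$. I would prove~(i) along this dichotomy, and then deduce~(ii) and~(iii) as specializations.

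For~(i) with $\min\{\lambda,\mu\}<2$ (so $\sigma>0$): given $x\in\Fix(P_B^\mu P_A^\lambda)$, put $a:=P_Ax$, $r:=P_A^\lambda x$, $b:=P_Br$; from $r=x+\lambda(a-x)$ and $x=P_B^\mu r=r+\mu(b-r)$ one eliminates $r$ to obtain the linear relations $x-a=\tfrac{\mu}{\sigma}(b-a)$ and $r-b=-\tfrac{\lambda}{\sigma}(b-a)$, where $\sigma>0$ is used. Rescaling the projection inequalities $\scal{x-a}{c-a}\le 0$ ($c\in A$) and $\scal{r-b}{c-b}\le 0$ ($c\in B$) by the positive scalar $\mu/\sigma$ and the negative scalar $-\lambda/\sigma$ respectively turns them into the variational characterizations $a=P_Ab$ and $b=P_Ba$; then \eqref{e:gap} forces $g=b-a$, so $a+g=b\in B$, hence $a\in A\cap(B-g)=E$ and $x=a+\tfrac{\mu}{\sigma}g$. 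This proves $P_A\Fix T_{\lambda,\mu}^\alpha\subseteq E$ and $\Fix T_{\lambda,\mu}^\alpha\subseteq E+\tfrac{\mu}{\sigma}g$. For the reverse inclusion, fix $e\in E$; the minimality of $g=P_{\overline{B-A}}0$ (equivalently $\scal{g}{v}\ge\|g\|^2$ for all $v\in B-A$, tested at $v=(e+g)-c$ and $v=c-e$) gives $\scal{g}{c-e}\le 0$ for $c\in A$ and $\scal{g}{c-e-g}\ge 0$ for $c\in B$, whence $P_A(e+tg)=e$ for every $t\ge 0$ and $P_B(e+sg)=e+g$ for every $s\le 1$. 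Setting $x:=e+\tfrac{\mu}{\sigma}g$ one computes $P_A^\lambda x=e+(1-\lambda)\tfrac{\mu}{\sigma}g$, notes $(1-\lambda)\tfrac{\mu}{\sigma}\le 1$ (equivalent to $\lambda\ge 0$), so $P_BP_A^\lambda x=e+g$, and a one-line computation using $(1-\mu)(1-\lambda)+\sigma=1$ yields $P_B^\mu P_A^\lambda x=x$, i.e.\ $x\in\Fix T_{\lambda,\mu}^\alpha$.

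For~(i) with $\lambda=\mu=2$ (so $A\cap B\neq\varnothing$, $g=0$, $E=A\cap B$): here $\Fix T_{2,2}^\alpha=\Fix R_BR_A$ with $R_A=2P_A-\Id$, $R_B=2P_B-\Id$. Writing $a:=P_Ax$ we have $R_Ax=2a-x$, and $R_B(2a-x)=x$ iff $P_B(2a-x)=a$, i.e.\ iff $a\in B$ and $a-x\in N_B(a)$; combined with the automatic $a\in A$, $x-a\in N_A(a)$ this gives $\Fix R_BR_A=\bigcup_{a\in A\cap B}\big(a+(N_A(a)\cap(-N_B(a)))\big)$. The crux is the identity $N_A(a)\cap(-N_B(a))=N_{A-B}(0)$, valid for \emph{every} $a\in A\cap B$: $v$ lies in the left-hand set iff $\scal{v}{a'-a}\le 0$ and $\scal{v}{b'-a}\ge 0$ for all $a'\in A,b'\in B$, which — by subtracting, and conversely by taking $b'=a$ or $a'=a$ — is equivalent to $\scal{v}{a'-b'}\le 0$ for all $a'\in A,b'\in B$. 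Hence $\Fix T_{2,2}^\alpha=(A\cap B)+N_{A-B}(0)$, and since $P_A(a+v)=a$ when $v\in N_A(a)$, $P_A\Fix T_{2,2}^\alpha\subseteq A\cap B=E$. Claims~(ii) and~(iii) then follow quickly: if $A\cap B\neq\varnothing$ then $g=0$ and $E=A\cap B$, so when $\min\{\lambda,\mu\}<2$ part~(i) gives $\Fix T_{\lambda,\mu}^\alpha=A\cap B$, and when $\lambda=\mu=2$ with $0\in\inte(B-A)$ we also get $0\in\inte(A-B)$, so $N_{A-B}(0)=\{0\}$ and $\Fix T_{2,2}^\alpha=A\cap B$; this is~(ii). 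For~(iii), $\reli A\cap\reli B\neq\varnothing$ forces $A\cap B\neq\varnothing$, so only $\lambda=\mu=2$ needs work: pick $c_0\in\reli A\cap\reli B$ and take $x=a+v$ with $a\in A\cap B$, $v\in N_{A-B}(0)$ and $x\in\aff(A\cup B)$; testing the inequalities defining $N_{A-B}(0)$ at $b'=c_0$ and at $a'=c_0$ gives $v\in N_A(c_0)\cap(-N_B(c_0))$, and since $c_0$ is a relative interior point of each set, $N_A(c_0)=(\aff A-\aff A)^\perp$ and $N_B(c_0)=(\aff B-\aff B)^\perp$, so $v\perp(\aff A-\aff A)+(\aff B-\aff B)=\aff(A\cup B)-c_0$; but $v=x-a\in\aff(A\cup B)-c_0$, hence $v=0$ and $x=a\in A\cap B$.

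I expect the main obstacle to be the sign- and range-bookkeeping in the $\min\{\lambda,\mu\}<2$ computation — recognizing that, after rescaling by $\mu/\sigma>0$ and by $-\lambda/\sigma<0$, the two projection inequalities become precisely the hypotheses $a=P_Ab$, $b=P_Ba$ of \eqref{e:gap}, and verifying that the scalars $(1-\lambda)\tfrac{\mu}{\sigma}$ and the various $t,s$ stay in the ranges where the ``projection is unchanged'' lemmas apply — together with the relative-interior identity $\aff(A\cup B)-c_0=(\aff A-\aff A)+(\aff B-\aff B)$ used in~(iii). Everything else is a direct application of the variational characterizations of projections and normal cones.
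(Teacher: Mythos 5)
Your proposal is correct, and for the core computation it mirrors the paper: in the case $\min\{\lambda,\mu\}<2$ the paper also reduces to $\Fix P_B^\mu P_A^\lambda$, derives the same linear relations $b-a=\tfrac{\lambda+\mu-\lambda\mu}{\mu}(x-a)$ (equivalently your $x-a=\tfrac{\mu}{\sigma}(b-a)$, $r-b=-\tfrac{\lambda}{\sigma}(b-a)$), concludes $a=P_Ab$, $b=P_Ba$ via Fact~\ref{f:cvxproj}\ref{f:cvxproj_ext} (where you use the variational inequality directly, which is the same thing), and then invokes \eqref{e:gap}; your converse, which re-derives $P_A(e+tg)=e$ for $t\geq 0$ and $P_B(e+sg)=e+g$ for $s\leq 1$ from the characterization of $g=P_{\overline{B-A}}0$, is a slightly more self-contained version of the paper's converse, which takes $a=P_Ab$, $b=P_Ba$ as given for $a\in E$, $b=a+g$ and again applies Fact~\ref{f:cvxproj}\ref{f:cvxproj_ext}. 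Where you genuinely diverge is in the remaining cases, which the paper dispatches by citation: for $\lambda=\mu=2$ it quotes \cite[Corollary~3.9]{BCL04} for $\Fix T_{2,2}^\alpha=A\cap B+N_{A-B}(0)$, for \ref{l:cvxFix_intersect} it quotes \cite[Proposition~2.10(ii)]{BDNP16b} for $N_{A-B}(0)=\{0\}$ when $0\in\inte(B-A)$, and for \ref{l:cvxFix_reli} with $\lambda=\mu=2$ it quotes \cite[Proposition~4.1(iii)]{BNP15}; you instead prove these from scratch via $\Fix R_BR_A=\bigcup_{a\in A\cap B}\bigl(a+(N_A(a)\cap(-N_B(a)))\bigr)$, the identity $N_A(a)\cap(-N_B(a))=N_{A-B}(0)$ for every $a\in A\cap B$, and, in \ref{l:cvxFix_reli}, the facts $N_A(c_0)=(\aff A-\aff A)^\perp$ at $c_0\in\reli A$ and $(\aff A-\aff A)+(\aff B-\aff B)=\aff(A\cup B)-c_0$. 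All of these steps check out (including the sign bookkeeping and the bound $(1-\lambda)\tfrac{\mu}{\sigma}\leq 1$), so what your route buys is self-containedness at the cost of length, while the paper's buys brevity by outsourcing the $\lambda=\mu=2$ and relative-interior cases to the literature; the only cosmetic omission is that in \ref{l:cvxFix_reli} you do not state the trivial reverse inclusion $A\cap B\subseteq\Fix T_{\lambda,\mu}^\alpha\cap\aff(A\cup B)$, which follows from $0\in N_{A-B}(0)$ (or Lemma~\ref{l:Fix}\ref{l:Fix_inclusion}).
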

\begin{proof}
\ref{l:cvxFix_inconsistent}:
First, it is straightforward to see that $\Fix T_{\lambda,\mu}^\alpha= \Fix P_B^\mu P_A^\lambda =\Fix T_{\lambda,\mu}^{1/2}$. If $\lambda= \mu= 2$, then the conclusion follows from \cite[Corollary~3.9]{BCL04}. 

Now assume that $\min\{\lambda,\mu\}< 2$. Since $\lambda,\mu\in \left]0, 2\right]$, we have $\lambda+\mu-\lambda\mu= 1-(1-\lambda)(1-\mu)> 0$.	
Let $x\in \Fix P_B^\mu P_A^\lambda$, that is, $x= P_B^\mu P_A^\lambda x$. Set $a= P_Ax$, $r= (1-\lambda)x+ \lambda a$, and $b= P_B r$. It follows that $x= (1-\mu)r+ \mu b$ and hence
\begin{equation}
\label{e:bax}
b= \tfrac{1}{\mu}x-\tfrac{1-\mu}{\mu}r= \tfrac{1}{\mu}x-\tfrac{1-\mu}{\mu}\Big((1-\lambda)x+\lambda a\Big)
= a+ \tfrac{\lambda+\mu-\lambda\mu}{\mu}(x-a).
\end{equation}
Since $\frac{\lambda+\mu-\lambda\mu}{\mu}> 0$, Fact~\ref{f:cvxproj}\ref{f:cvxproj_ext} implies that $P_A b = a$. Similarly, $P_B a=b$. We then use \eqref{e:gap} to deduce $g= b- a$. Combining with \eqref{e:bax} yields
\begin{equation}
x=a+\tfrac{\mu}{\lambda+\mu-\lambda\mu}(b-a)\in E+\tfrac{\mu}{\lambda+\mu-\lambda\mu}g.
\end{equation}
Conversely, take $x\in E+\frac{\mu}{\lambda+\mu-\lambda\mu}g$. It suffices to show that $x\in \Fix P_B^\mu P_A^\lambda$ and $P_Ax\in E$. By assumption, there exist $a\in A$ and $b\in B$ such that $a=b-g\in E$, $a=P_Ab$, $b=P_Ba$, and that $x=a+\frac{\mu}{\lambda+\mu-\lambda\mu}g=P_Bb+\frac{\mu}{\lambda+\mu-\lambda\mu}(b-P_Ab)$. Again, Fact~\ref{f:cvxproj}\ref{f:cvxproj_ext} yields $P_A x =P_A b=a\in E$. In turn, 
\begin{equation}
r:=P_A^\lambda x
=(1-\lambda)\Big(a+\tfrac{\mu}{\lambda+\mu-\lambda\mu}g\Big)+ \lambda a
=b-\tfrac{\lambda}{\lambda+\mu-\lambda\mu}g
=b+\tfrac{\lambda}{\lambda+\mu-\lambda\mu}(a-b)
\end{equation}
and, by Fact~\ref{f:cvxproj}\ref{f:cvxproj_ext}, $P_Br=b$. It follows that
\begin{equation}
P_B^\mu P_A^\lambda x= P_B^\mu r
=(1-\mu)\Big(b-\tfrac{\lambda}{\lambda+ \mu-\lambda\mu}g\Big)+\mu b
= a+g-\tfrac{(1-\mu)\lambda}{\lambda+\mu-\lambda\mu}g= x,
\end{equation}
which completes the proof.

\ref{l:cvxFix_intersect}: This follows from \ref{l:cvxFix_inconsistent} by noting that if $A\cap B\neq \varnothing$, then $g= 0$ and $E =A\cap B$, and from, e.g., \cite[Proposition~2.10(ii)]{BDNP16b} that if $0\in \inte(B-A)$, then $N_{A-B}(0)= 0$. 

\ref{l:cvxFix_reli}: If $\min\{\lambda,\mu\}<2$, then the conclusion follows from \ref{l:cvxFix_intersect}. If $\lambda=\mu=2$, then the conclusion follows from \cite[Proposition~4.1(iii)]{BNP15}.
\end{proof}

\begin{theorem}[convex possibly inconsistent case]
\label{t:cvg}
Let $\lambda,\mu\in \left]0, 2\right]$ and $\alpha \in \big]0, 1+\hbeta\big[$ where $\hbeta:= \big(\frac{\lambda}{2-\lambda}+ \frac{\mu}{2-\mu}\big)^{-1}$. Suppose that $E\neq \varnothing$ when $\min\{\lambda,\mu\}< 2$, and $A\cap B\neq \varnothing$ when $\lambda= \mu= 2$. 
Then the gDR sequence $(x_n)_\nnn$ generated by $T_{\lambda,\mu}^\alpha$ weakly converges to a point $\ox\in \Fix T_{\lambda,\mu}^\alpha$ with $P_A\ox\in E$.
\end{theorem}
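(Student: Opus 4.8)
The plan is to recognize the statement as an assembly of the preceding lemmas plus the classical weak convergence theory for averaged nonexpansive operators. First I would invoke Lemma~\ref{l:cont-avg}\ref{l:cont-avg_avg}: since $\lambda,\mu\in\left]0,2\right]$ and $\alpha\in\big]0,1+\hbeta\big[$ with $\hbeta=\big(\frac{\lambda}{2-\lambda}+\frac{\mu}{2-\mu}\big)^{-1}$, the operator $T:=T_{\lambda,\mu}^\alpha$ is $\frac{\alpha}{1+\hbeta}$-averaged; in particular it is single-valued and nonexpansive with averaging constant in $\left]0,1\right[$, which is exactly the regime in which the Krasnosel'ski\u{\i}--Mann iteration theory applies.

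Second, I would check that $\Fix T\neq\varnothing$ and that its shadow under $P_A$ lands in $E$. This is precisely the content of Lemma~\ref{l:cvxFix}\ref{l:cvxFix_inconsistent}: under the hypothesis that $E\neq\varnothing$ when $\min\{\lambda,\mu\}<2$ and $A\cap B\neq\varnothing$ when $\lambda=\mu=2$, one has $\Fix T=E+\frac{\mu}{\lambda+\mu-\lambda\mu}g$ (respectively $\Fix T=A\cap B+N_{A-B}(0)$), which is nonempty, and moreover $P_A\Fix T\subseteq E$. So both facts are available off the shelf.

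Third, with $T$ averaged and $\Fix T\neq\varnothing$, I would apply the standard weak convergence theorem for averaged operators (asymptotic regularity $x_n-Tx_n\to 0$, Fej\'er monotonicity of $(x_n)$ with respect to $\Fix T$, and the demiclosedness principle for $\Id-T$; see, e.g., \cite[Theorem~5.14 and~5.15]{BC17}) to conclude that the gDR sequence $(x_n)_\nnn$ converges weakly to some $\ox\in\Fix T$. Finally, the remaining assertion $P_A\ox\in E$ is immediate from $\ox\in\Fix T$ together with the inclusion $P_A\Fix T\subseteq E$ recorded in the previous step.

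I do not anticipate a real obstacle, since the theorem is essentially a corollary of Lemmas~\ref{l:cont-avg} and~\ref{l:cvxFix}. The one point worth flagging is that the conclusion $P_A\ox\in E$ must \emph{not} be derived by passing to the weak limit in the sequence $(P_Ax_n)_\nnn$ --- the metric projection onto a convex set is not weakly continuous in general --- but is instead read directly off the structural description of $\Fix T$ from Lemma~\ref{l:cvxFix}\ref{l:cvxFix_inconsistent}. A self-contained alternative would be to reprove the needed part of that lemma via asymptotic regularity and the fixed-point computation in its proof, but with the lemma already in hand the short route above is preferable.
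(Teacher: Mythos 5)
Your proposal is correct and follows essentially the same route as the paper: averagedness from Lemma~\ref{l:cont-avg}\ref{l:cont-avg_avg}, the fixed-point description and $P_A\Fix T_{\lambda,\mu}^\alpha\subseteq E$ from Lemma~\ref{l:cvxFix}\ref{l:cvxFix_inconsistent}, and the standard weak convergence theorem for averaged operators (the paper cites \cite[Proposition~5.16]{BC17} with all $\lambda_n=1$, which plays the role of your Krasnosel'ski\u{\i}--Mann argument). Your remark that $P_A\ox\in E$ is read off the structure of the fixed-point set rather than by weak-limit arguments is exactly how the paper obtains it.
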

\begin{proof}
According to Lemma~\ref{l:cont-avg}\ref{l:cont-avg_avg}, $T_{\lambda,\mu}^\alpha =(1 -\alpha)\Id +\alpha P_B^\mu P_A^\lambda$ is $\alpha/(1+\hbeta)$-averaged. Now use Lemma~\ref{l:cvxFix}\ref{l:cvxFix_inconsistent} and apply \cite[Proposition~5.16]{BC17} with all $\lambda_n= 1$.  
\end{proof}

In the light of Theorem~\ref{t:cvg}, if the gDR algorithm involves at most one reflection, then it is weakly convergent even in the inconsistent case.

\section{Quasi firm Fej\'er monotonicity}
\label{s:qfFm}
In this section, we further refine some results on quasi firm Fej\'er monotonicity, which were partly developed in \cite{DP16}.
Let $C$ and $U$ be nonempty subsets of $X$, let $\gamma \in \left[1, +\infty\right[$, and let $\beta \in \RP$. Recall from \cite[Definition~3.1]{DP16} that a set-valued operator $T\colon X\To X$ is \emph{$(C, \gamma, \beta)$-quasi firmly Fej\'er monotone} on $U$ if
\begin{equation}
\label{e:qfF}
\forall x \in U,\ \forall x_+ \in Tx,\ \forall \ox \in C,\quad
\|x_+ -\ox\|^2 +\beta\|x -x_+\|^2 \leq \gamma \|x -\ox\|^2.
\end{equation}
Here, when $\beta= 0$, we simply say that $T$ is \emph{$(C, \gamma)$-quasi Fej\'er monotone} on $U$.

\begin{lemma}[averaged quasi firmly Fej\'er monotone operators]
\label{l:averaged}
Let $C$ and $U$ be nonempty subsets of $X$, $\gamma\in \left[1, +\infty\right[$, $\beta\in \RP$, $\lambda\in \left]0, 1+\beta\right]$, and let $T\colon X\To X$ be a set-valued operator. Then the following are equivalent:
\begin{enumerate}
\item 
\label{l:averaged_ori}
$T$ is $(C, \gamma, \beta)$-quasi firmly Fej\'er monotone on $U$.
\item 
\label{l:averaged_avg}
$T^\lambda:= (1- \lambda)\Id+ \lambda T$ is $(C, 1- \lambda+ \lambda\gamma, \frac{1-\lambda+\beta}{\lambda})$-quasi firmly Fej\'er monotone on $U$.
\item 
\label{l:averaged_inv}
$T =\frac{\beta}{1+\beta}\Id+ \frac{1}{1+\beta}T^{1+\beta}$ with $T^{1+\beta}:= (1+ \beta)T -\beta\Id$ being $(C,\gamma+ \beta(\gamma- 1))$-quasi Fej\'er monotone on $U$.
\end{enumerate}
\end{lemma}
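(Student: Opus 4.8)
The plan is to reduce all three conditions to a single algebraic identity comparing a relaxed step with the original one. Fix $x\in U$, $x_+\in Tx$, and $\ox\in C$, and for $\lambda\in\left]0,1+\beta\right]$ put $x_\lambda:=(1-\lambda)x+\lambda x_+$, so that, as $x_+$ ranges over $Tx$, the point $x_\lambda$ ranges exactly over $T^\lambda x$ (and conversely every element of $T^\lambda x$ is of this form, since $T^\lambda x=(1-\lambda)x+\lambda Tx$). Note that $x-x_\lambda=\lambda(x-x_+)$. Applying the convex-combination identity $\|(1-\lambda)a+\lambda b\|^2=(1-\lambda)\|a\|^2+\lambda\|b\|^2-\lambda(1-\lambda)\|a-b\|^2$ with $a=x-\ox$ and $b=x_+-\ox$ (so that $a-b=x-x_+$), and abbreviating $\beta_\lambda:=\frac{1-\lambda+\beta}{\lambda}$, a short computation — in which the coefficient of $\|x-x_+\|^2$ collapses because $\beta_\lambda\lambda^2-\lambda(1-\lambda)=\lambda\beta$ — yields the identity
\[
\|x_\lambda-\ox\|^2+\beta_\lambda\|x-x_\lambda\|^2
=(1-\lambda)\|x-\ox\|^2+\lambda\big(\|x_+-\ox\|^2+\beta\|x-x_+\|^2\big).
\]

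Granting this identity, the equivalence \ref{l:averaged_ori}$\Leftrightarrow$\ref{l:averaged_avg} is immediate. Since $\lambda>0$, the inequality $\|x_+-\ox\|^2+\beta\|x-x_+\|^2\le\gamma\|x-\ox\|^2$ holds for every admissible triple $(x,x_+,\ox)$ if and only if, after multiplying by $\lambda$, adding $(1-\lambda)\|x-\ox\|^2$ to both sides, and invoking the identity, the inequality $\|x_\lambda-\ox\|^2+\beta_\lambda\|x-x_\lambda\|^2\le(1-\lambda+\lambda\gamma)\|x-\ox\|^2$ holds for every admissible triple. Here I would also check that the transformed constants stay in the ranges demanded by the definition: $1-\lambda+\lambda\gamma=1+\lambda(\gamma-1)\ge1$ because $\lambda>0$ and $\gamma\ge1$, while $\beta_\lambda=\frac{1-\lambda+\beta}{\lambda}\ge0$ precisely because $0<\lambda\le1+\beta$ — this is the only place the hypothesis on $\lambda$ is used.

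Finally, \ref{l:averaged_inv} is simply the specialization $\lambda=1+\beta$ of \ref{l:averaged_avg}: for this value $T^\lambda=(1-(1+\beta))\Id+(1+\beta)T=(1+\beta)T-\beta\Id=T^{1+\beta}$, the relaxation constant degenerates to $\beta_{1+\beta}=0$, the contraction constant becomes $1-(1+\beta)+(1+\beta)\gamma=\gamma+\beta(\gamma-1)$, being $(C,\gamma+\beta(\gamma-1),0)$-quasi firmly Fej\'er monotone is by definition the same as being $(C,\gamma+\beta(\gamma-1))$-quasi Fej\'er monotone, and the displayed relation $T=\frac{\beta}{1+\beta}\Id+\frac{1}{1+\beta}T^{1+\beta}$ is a tautology given the definition of $T^{1+\beta}$. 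Thus \ref{l:averaged_ori}$\Leftrightarrow$\ref{l:averaged_avg}$\Rightarrow$\ref{l:averaged_inv}, and taking $\lambda=1+\beta$ in the direction \ref{l:averaged_avg}$\Rightarrow$\ref{l:averaged_ori} gives \ref{l:averaged_inv}$\Rightarrow$\ref{l:averaged_ori}, closing the loop. I do not expect a genuine obstacle here; the only real care needed is getting the cancellation in the $\|x-x_+\|^2$ coefficient right and confirming that the transformed parameters remain admissible, which is exactly what pins down the range $\lambda\in\left]0,1+\beta\right]$.
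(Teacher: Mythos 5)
Your proof is correct: the convex-combination identity is applied with the right vectors, the coefficient computation $\beta_\lambda\lambda^2-\lambda(1-\lambda)=\lambda\beta$ checks out, you correctly observe that every element of $T^\lambda x$ has the form $(1-\lambda)x+\lambda x_+$ with $x_+\in Tx$ (so the converse direction is legitimate), and the specialization $\lambda=1+\beta$ with $\beta_{1+\beta}=0$ and constant $\gamma+\beta(\gamma-1)$ gives \ref{l:averaged_inv} exactly as stated. Your route, however, differs from the paper's. The paper does not redo the computation: it cites \cite[Lemma~3.2]{DP16} for the implication \ref{l:averaged_ori}$\Rightarrow$\ref{l:averaged_avg}, and for the converse it applies that same cited lemma to the inverse relaxation $T=(1-\tfrac{1}{\lambda})\Id+\tfrac{1}{\lambda}T^\lambda$, checking that the inverse parameter $\tfrac{1}{\lambda}$ lies in the admissible range $\left]0,\,1+\tfrac{1-\lambda+\beta}{\lambda}\right]$; item \ref{l:averaged_inv} is then obtained, as in your argument, by taking $\lambda=1+\beta$. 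What your approach buys is self-containment and symmetry: the exact identity
\begin{equation*}
\|x_\lambda-\ox\|^2+\tfrac{1-\lambda+\beta}{\lambda}\|x-x_\lambda\|^2
=(1-\lambda)\|x-\ox\|^2+\lambda\big(\|x_+-\ox\|^2+\beta\|x-x_+\|^2\big)
\end{equation*}
delivers both directions of \ref{l:averaged_ori}$\Leftrightarrow$\ref{l:averaged_avg} simultaneously, with no appeal to an external lemma and no inverse-relaxation trick, and it makes transparent that the only role of the hypothesis $\lambda\leq 1+\beta$ is to keep the transformed constant $\tfrac{1-\lambda+\beta}{\lambda}$ nonnegative. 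The paper's version is shorter on the page precisely because the analogous one-directional computation is outsourced to \cite{DP16}.
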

\begin{proof}
If \ref{l:averaged_ori} holds, then so does \ref{l:averaged_avg} due to \cite[Lemma~3.2]{DP16}. Conversely, if \ref{l:averaged_avg} holds, applying \cite[Lemma~3.2]{DP16} to $T= (1- \frac{1}{\lambda})\Id -\frac{1}{\lambda}T^\lambda$ and noting that $0< \frac{1}{\lambda}\leq 1+\frac{1-\lambda+\beta}{\lambda}$, we get \ref{l:averaged_ori}. So \ref{l:averaged_ori} and \ref{l:averaged_avg} are equivalent, which implies the equivalence of \ref{l:averaged_ori} and \ref{l:averaged_inv} by taking $\lambda =1+ \beta$.
\end{proof}

\begin{lemma}[composition of quasi firmly Fej\'er monotone operators]
\label{l:composition}	
Let $m$ be a positive integer, set $I:= \{1, \dots, m\}$, and for every $i\in I$, let $C_i$ and $U_i$ be a nonempty subset of $X$, $\gamma_i\in \left[1, +\infty\right[$, $\beta_i\in \RP$, and $T_i$ a $(C_i,\gamma_i,\beta_i)$-quasi firmly Fej\'er monotone operator on $U_i$. Set $C:= \bigcap_{i\in I} C_i$, $\gamma :=\gamma_1\cdots \gamma_m$,
\begin{equation}
\beta' :=\Big(\frac{1}{\beta_1\gamma_2\cdots \gamma_m} +\frac{1}{\beta_2\gamma_3\cdots \gamma_m} +\cdots +\frac{1}{\beta_{m-1}\gamma_m} +\frac{1}{\gamma_m}\Big)^{-1},
\quad\text{and~} \beta :=\Big(\sum_{i\in I} \frac{1}{\beta_i}\Big)^{-1}.
\end{equation}
Let $x_0, x_1,\dots, x_m$ be such that for every $i\in I$, $x_i\in T_ix_{i-1}$.
Then $\beta'\geq \beta\geq 0$ and, if $x_{i-1}\in U_i$ for every $i\in I$, it holds that
\begin{subequations}
\label{e:composition}	
\begin{align}
\forall \ox\in C,\quad \gamma\|x_0 -\ox\|^2 &\geq \|x_m -\ox\|^2 +\beta'\Big(\sum_{i\in I}\|x_{i-1} -x_i\|\Big)^2 \\
&\geq \|x_m -\ox\|^2 +\beta'\|x_0 -x_m\|^2.
\end{align}	
\end{subequations}
Consequently, if $T_iU_i\subseteq U_{i+1}$ for every $i\in I\smallsetminus \{m\}$, then $T_m\cdots T_1$ is $(C,\gamma,\beta')$- and also $(C,\gamma,\beta)$-quasi firmly Fej\'er monotone on $U_1$.	
\end{lemma}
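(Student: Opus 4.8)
The plan is to prove the chain of inequalities \eqref{e:composition} by induction on $m$, and then read off the "consequently" statement as a direct corollary. The base case $m=1$ is just the definition of $(C_1,\gamma_1,\beta_1)$-quasi firm Fej\'er monotonicity, with $\beta'=\beta_1$ and the two displayed inequalities collapsing to one. For the inductive step, I would split off the last operator: write $\gamma = \gamma'\gamma_m$ and $\beta' = (\frac{1}{\beta'_{(m-1)}\gamma_m} + \frac{1}{\gamma_m})^{-1} = \frac{\beta'_{(m-1)}}{\gamma_m(1+\beta'_{(m-1)})}\cdot(\dots)$, where $\beta'_{(m-1)}$ and $\gamma'=\gamma_1\cdots\gamma_{m-1}$ are the quantities associated with the composition $T_{m-1}\cdots T_1$. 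By the induction hypothesis applied to $x_0,\dots,x_{m-1}$, for every $\ox\in C$ we have $\gamma'\|x_0-\ox\|^2 \ge \|x_{m-1}-\ox\|^2 + \beta'_{(m-1)}\big(\sum_{i=1}^{m-1}\|x_{i-1}-x_i\|\big)^2$. Multiplying by $\gamma_m$ and combining with the defining inequality $\gamma_m\|x_{m-1}-\ox\|^2 \ge \|x_m-\ox\|^2 + \beta_m\|x_{m-1}-x_m\|^2$ for $T_m$, one obtains
\begin{equation*}
\gamma\|x_0-\ox\|^2 \ge \|x_m-\ox\|^2 + \beta_m\|x_{m-1}-x_m\|^2 + \gamma_m\beta'_{(m-1)}\Big(\sum_{i=1}^{m-1}\|x_{i-1}-x_i\|\Big)^2.
\end{equation*}

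The remaining task is to bound the two quadratic terms in the $\|x_{i-1}-x_i\|$ from below by $\beta'\big(\sum_{i=1}^m\|x_{i-1}-x_i\|\big)^2$. This is the elementary fact that for nonnegative reals $a,b$ and positive weights $p,q$, one has $pa^2 + qb^2 \ge \big(\frac{1}{p}+\frac{1}{q}\big)^{-1}(a+b)^2$ (Cauchy--Schwarz, or equivalently the convexity of $t\mapsto t^2$); applying it with $p=\beta_m$, $q=\gamma_m\beta'_{(m-1)}$, $a=\|x_{m-1}-x_m\|$, $b=\sum_{i=1}^{m-1}\|x_{i-1}-x_i\|$ gives exactly the weight $\big(\frac{1}{\beta_m}+\frac{1}{\gamma_m\beta'_{(m-1)}}\big)^{-1}$, which a short computation identifies with $\beta'$. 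The second inequality in \eqref{e:composition} then follows from the triangle inequality $\sum_{i\in I}\|x_{i-1}-x_i\|\ge \|x_0-x_m\|$. I should also handle the degenerate case where some $\beta_i=0$ (so $\beta=\beta'=0$ by the stated convention): then the claimed inequalities are weakest and follow by simply dropping the offending quadratic term, and the Cauchy--Schwarz step is applied only to the surviving positive-weight terms.

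For $\beta'\ge\beta\ge 0$: since each $\gamma_i\ge 1$, every denominator $\beta_j\gamma_{j+1}\cdots\gamma_m$ in the definition of $\beta'$ is at most $\beta_j\cdot(\gamma_1\cdots\gamma_m)/(\text{stuff}\ge 1)$... more directly, $\beta_j\gamma_{j+1}\cdots\gamma_m \ge \beta_j$ forces $\frac{1}{\beta_j\gamma_{j+1}\cdots\gamma_m}\le\frac{1}{\beta_j}$ termwise, hence $(\beta')^{-1}\le\beta^{-1}$, i.e. $\beta'\ge\beta$; the convention covers the case where some $\beta_j=0$. Finally, the "consequently" clause: if $T_iU_i\subseteq U_{i+1}$ for $i<m$, then starting from any $x_0\in U_1$ and any selection $x_i\in T_ix_{i-1}$, we automatically have $x_{i-1}\in U_i$ for all $i$, so \eqref{e:composition} applies to every $x_m\in (T_m\cdots T_1)x_0$, which is precisely the assertion that $T_m\cdots T_1$ is $(C,\gamma,\beta')$-quasi firmly Fej\'er monotone on $U_1$; since $\beta'\ge\beta$, the weaker $(C,\gamma,\beta)$ statement follows a fortiori.

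I expect the only genuinely delicate point to be the bookkeeping in the inductive step — correctly matching the recursively-defined $\beta'$ against $\big(\frac{1}{\beta_m}+\frac{1}{\gamma_m\beta'_{(m-1)}}\big)^{-1}$ and keeping the $\gamma_m$-factors straight — together with being careful that the zero-$\beta_i$ convention is consistently respected at each stage. The analytic content (Cauchy--Schwarz and the triangle inequality) is routine.
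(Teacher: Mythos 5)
Your proof is correct and follows essentially the same route as the paper: the paper multiplies the $i$-th defining inequality by $\gamma_{i+1}\cdots\gamma_m$, telescopes, and then applies the $m$-term weighted (coordinate) Cauchy--Schwarz inequality in a single step, which is exactly what your induction with the two-term inequality $pa^2+qb^2\geq\big(\tfrac{1}{p}+\tfrac{1}{q}\big)^{-1}(a+b)^2$ unrolls to. One small remark: your identification of the recursive constant with $\beta'$ uses the reading in which the last summand defining $\beta'$ is $1/\beta_m$ rather than the printed $1/\gamma_m$; that is indeed the quantity the paper's own proof works with (and the one for which $\beta'\geq\beta$ holds), so your reading is the right one.
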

\begin{proof}
Because $\gamma_i \geq 1$ and $\beta_i\geq 0$ for every $i\in I$, we have $\beta'\geq \beta\geq 0$. 
Next, let $\ox\in C$. For every $i\in I$, since $x_{i-1}\in U_i$ and $T_i$ is $(C,\gamma_i,\beta_i)$-quasi firmly Fej\'er monotone on $U_i$, we derive that 
\begin{subequations}
\begin{align}
\|x_1 -\ox\|^2 +\beta_1\|x_0-x_1\|^2 &\leq \gamma_1\|x_0 -\ox\|^2,\\
\|x_2 -\ox\|^2 +\beta_2\|x_1-x_2\|^2 &\leq \gamma_2\|x_1 -\ox\|^2,\\
&\;\;\vdots\\
\|x_m -\ox\|^2 +\beta_m\|x_{m-1}-x_m\|^2 &\leq \gamma_m\|x_{m-1} -\ox\|^2.
\end{align}
\end{subequations}
Using telescoping techniques yields
\begin{equation}
\label{e:telescope}
\begin{aligned}
(\gamma_1 \cdots \gamma_m)\|x_0 -\ox\|^2
\geq \|x_m -\ox\|^2 +\big(\beta_1\gamma_2\cdots \gamma_m\|x_0 -x_1\|^2 +\beta_2\gamma_3\cdots \gamma_m\|x_1 -x_2\|^2 \\
+\cdots +\beta_{m-1}\gamma_m\|x_{m-2} -x_{m-1}\|^2 +\beta_m\|x_{m-1}-x_m\|^2\big).
\end{aligned}
\end{equation}
By the coordinate version of Cauchy--Schwarz inequality,
\begin{equation}
\begin{aligned}
\Big(\frac{1}{\beta_1\gamma_2\cdots \gamma_m} +\cdots +\frac{1}{\beta_m}\Big)\big(\beta_1\gamma_2\cdots \gamma_m\|x_0 -x_1\|^2 +\cdots +\beta_m\|x_{m-1}-x_m\|^2\big) \\
\geq \Big(\sum_{i\in I}\|x_{i-1} -x_i\|\Big)^2.
\end{aligned}
\end{equation}
Combining with \eqref{e:telescope}, we obtain \eqref{e:composition}.

Now assume that $T_iU_i\subseteq U_{i+1}$ for every $i\in I\smallsetminus \{m\}$. Let $x\in U_1$ and $x_+\in (T_m\cdots T_1)x$. Then there exist $x_0, x_1,\dots, x_m$ such that $x_0 =x$, $x_m =x_+$, and $x_i\in Tx_{i-1}$ for every $i\in I$. We derive that $x_{i-1}\in U_i$ for every $i\in I$ and, by \eqref{e:composition},
\begin{equation}
\forall \ox\in C,\quad \gamma\|x -\ox\|^2 \geq \|x_+ -\ox\|^2 +\beta'\|x -x_+\|^2
\geq \|x_+ -\ox\|^2 +\beta\|x -x_+\|^2.
\end{equation}
The proof is complete.
\end{proof}

\begin{lemma}[quasi firm Fej\'er monotonicity of relaxed projectors]
\label{l:rproj}
Let $C$ be a nonempty subset of $X$ and $L$ be an affine subspace of $X$ containing $C$. Let also $w \in C$, $\varepsilon \in \left[0, 1\right[$, $\delta \in \RPP$, and $\lambda\in \left]0, 2\right]$. Set 
\begin{equation}
\label{e:setting}
\Omega :=C\cap \ball{w}{\delta},\quad
\gamma :=1 +\tfrac{\lambda\varepsilon}{1 -\varepsilon},
\quad\text{and}\quad \beta :=\tfrac{2 -\lambda}{\lambda}.
\end{equation}
Suppose that $C$ is $(\varepsilon, \delta)$-regular at $w$. Then $P_C^\lambda$ is $(\Omega+ (L-L)^\perp, \gamma, \beta)$-quasi firmly Fej\'er monotone and, in particular, $R_C$ is $(\Omega+ (L-L)^\perp, \frac{1+\varepsilon}{1-\varepsilon})$-quasi Fej\'er monotone on $\ball{w}{\delta/2}\cap L$.
\end{lemma}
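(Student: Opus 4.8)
The plan is to verify the defining inequality~\eqref{e:qfF} directly: fix $x\in\ball{w}{\delta/2}\cap L$, a point $x_+\in P_C^\lambda x$, and $\ox\in\Omega+(L-L)^\perp$, and show $\|x_+-\ox\|^2+\beta\|x-x_+\|^2\le\gamma\|x-\ox\|^2$. Write $x_+=(1-\lambda)x+\lambda p$ with $p\in P_Cx$ and $\ox=\overline c+u$ with $\overline c\in\Omega=C\cap\ball{w}{\delta}$ and $u\in(L-L)^\perp$. Note that $\beta=(2-\lambda)/\lambda\ge 0$ and $\gamma=1+\lambda\varepsilon/(1-\varepsilon)\ge 1$, as required by the definition of quasi firm Fej\'er monotonicity.

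First I would peel off the $(L-L)^\perp$-component. Since $p\in C\subseteq L$ and $x,\overline c\in L$, both $x-\overline c$ and $x_+-\overline c=(1-\lambda)(x-\overline c)+\lambda(p-\overline c)$ lie in $L-L$, hence are orthogonal to $u$. Consequently $\|x-\ox\|^2=\|x-\overline c\|^2+\|u\|^2$ and $\|x_+-\ox\|^2=\|x_+-\overline c\|^2+\|u\|^2$, while $\|x-x_+\|$ does not involve $u$ at all. Since $\gamma\ge 1$, the claim reduces to the $u$-free inequality $\|x_+-\overline c\|^2+\beta\|x-x_+\|^2\le\gamma\|x-\overline c\|^2$.

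Next I would locate $p$ and set up the inner-product identity. From $w\in C$ and $\|x-w\|\le\delta/2$ we get $\|p-w\|\le\|p-x\|+\|x-w\|=d_C(x)+\|x-w\|\le 2\|x-w\|\le\delta$, so $p,\overline c\in C\cap\ball{w}{\delta}$; moreover $x-p=1\cdot(x-p)\in\pnX{C}(p)$ because $x\in P_C^{-1}(p)$. Using $x-x_+=\lambda(x-p)$ and $\beta=(2-\lambda)/\lambda$, a short expansion gives
\[
\|x_+-\overline c\|^2+\beta\|x-x_+\|^2=\|x-\overline c\|^2+2\lambda\scal{x-p}{\overline c-p},
\]
so (dividing by $\lambda>0$) the reduced inequality is equivalent to $2\scal{x-p}{\overline c-p}\le\frac{\varepsilon}{1-\varepsilon}\|x-\overline c\|^2$.

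The key — and the only slightly delicate — step is to obtain this bound from $(\varepsilon,\delta)$-regularity, which I would invoke twice. Applying regularity at $w$ to the points $p,\overline c\in C\cap\ball{w}{\delta}$ and the normal vector $x-p$ yields $\scal{x-p}{p-\overline c}\ge-\varepsilon\|x-p\|\,\|p-\overline c\|$, hence $2\scal{x-p}{\overline c-p}\le 2\varepsilon\|x-p\|\,\|p-\overline c\|$. On the other hand, expanding $\|x-\overline c\|^2=\|x-p\|^2+2\scal{x-p}{p-\overline c}+\|p-\overline c\|^2$ and combining the same regularity inequality with $\|x-p\|^2+\|p-\overline c\|^2\ge 2\|x-p\|\,\|p-\overline c\|$ gives $\|x-\overline c\|^2\ge 2(1-\varepsilon)\|x-p\|\,\|p-\overline c\|$; since $\varepsilon<1$, this rearranges to $\|x-p\|\,\|p-\overline c\|\le\frac{1}{2(1-\varepsilon)}\|x-\overline c\|^2$. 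Substituting into the first estimate produces $2\scal{x-p}{\overline c-p}\le\frac{\varepsilon}{1-\varepsilon}\|x-\overline c\|^2$, which closes the argument. Finally, the ``in particular'' assertion for $R_C=P_C^2$ follows by taking $\lambda=2$, so that $\beta=0$ and $\gamma=\frac{1+\varepsilon}{1-\varepsilon}$.
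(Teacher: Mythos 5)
Your proof is correct: every step checks out, including the cancellation identity $\|x_+-\overline c\|^2+\beta\|x-x_+\|^2=\|x-\overline c\|^2+2\lambda\scal{x-p}{\overline c-p}$ (valid precisely because $\beta=\tfrac{2-\lambda}{\lambda}$), the membership $p\in C\cap\ball{w}{\delta}$, and the two uses of the regularity inequality combined with AM--GM. Your route is, however, organized differently from the paper's. The paper first proves that $P_C$ itself is $\big(\Omega+(L-L)^\perp,\tfrac{1}{1-\varepsilon},1\big)$-quasi firmly Fej\'er monotone on $\ball{w}{\delta/2}\cap L$ --- handling the orthogonal component inside the regularity estimate via $\scal{x-p}{v}=0$ and $\|p-\ox\|\geq\|p-u\|$, and bounding $-\varepsilon\|x-p\|\|p-u\|\geq-\tfrac{\varepsilon}{2}(\|x-p\|^2+\|p-u\|^2)$ --- and then passes to $P_C^\lambda$ by the abstract relaxation result, Lemma~\ref{l:averaged}, which delivers exactly the constants $\gamma$ and $\beta$ of \eqref{e:setting}. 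You instead verify the inequality for general $\lambda$ in one shot: you strip the $(L-L)^\perp$-component at the outset using $\gamma\geq1$, reduce to the $\lambda$-free estimate $2\scal{x-p}{\overline c-p}\leq\tfrac{\varepsilon}{1-\varepsilon}\|x-\overline c\|^2$, and prove that from regularity plus the bound $\|x-p\|\,\|p-\overline c\|\leq\tfrac{1}{2(1-\varepsilon)}\|x-\overline c\|^2$. The two arguments rest on the same ingredients (the proximal-normal regularity inequality at $p$ against $\overline c$, and an AM--GM step) and give identical constants; yours is self-contained and also bypasses Fact~\ref{f:ImBall}\ref{f:ImBall_P} by the elementary estimate $\|p-w\|\leq 2\|x-w\|\leq\delta$, at the cost of redoing by hand the algebra that Lemma~\ref{l:averaged} packages, whereas the paper's factorization through $P_C$ is more modular and is reused for other relaxations.
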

\begin{proof} 
Let $x \in \ball{w}{\delta/2}\cap L\subseteq \ball{w}{\delta/2}$, let $p\in P_C x$, and let $\ox\in \Omega+ (L-L)^\perp$. By Fact~\ref{f:ImBall}\ref{f:ImBall_P}, $p\in \Omega$ . Writing $\ox =u +v$ with $u\in \Omega$ and $v\in (L-L)^\perp$, we note that $x, p, u\in L$, so $\scal{x -p}{v} =0$ and $\|p -\ox\|^2 =\|p -u\|^2 +\|v\|^2 \geq \|p -u\|^2$. Since $C$ is $(\varepsilon,\delta)$-regular at $w$ and $x -p \in \pnX{C}(p)$, we have
\begin{subequations}
\begin{align}
\scal{x -p}{p -\ox} &=\scal{x -p}{p -u}\geq -\varepsilon \|x -p\| \|p -u\| \\
&\geq -\tfrac{\varepsilon}{2}\big(\|x -p\|^2 +\|p -u\|^2\big)
\geq -\tfrac{\varepsilon}{2}\big(\|x -p\|^2 +\|p -\ox\|^2\big).
\end{align}
\end{subequations}
Therefore,
\begin{subequations}
\begin{align}
\|x-\ox\|^2&=\|x-p\|^2+\|p-\ox\|^2+2\scal{x-p}{p-\ox}\\
&\geq \|x-p\|^2+\|p-\ox\|^2-\varepsilon\big(\|x -p\|^2 +\|p -\ox\|^2\big)\\
&=(1-\varepsilon)\big(\|x-p\|^2+\|p-\ox\|^2\big),
\end{align}
\end{subequations}
which yields
\begin{equation}
\tfrac{1}{1-\varepsilon}\|x-\ox\|^2\geq \|x -p\|^2 +\|p -\ox\|^2,
\end{equation}
i.e., $P_C$ is $\big(\Omega+ (L-L)^\perp,\frac{1}{1-\varepsilon},1\big)$-quasi firmly Fej\'er monotone on $\ball{w}{\delta/2}\cap L$. In turn, Lemma~\ref{l:averaged} implies that $P^\lambda_C$ is $\big(\Omega+ (L-L)^\perp,\gamma,\beta\big)$-quasi firmly Fej\'er monotone on $\ball{w}{\delta/2}\cap L$ with $\gamma$ and $\beta$ as in \eqref{e:setting}.
In the case when $\lambda =2$, since $\gamma =\frac{1+\varepsilon}{1-\varepsilon}$ and $\beta =0$, the conclusion follows.
\end{proof}

\begin{proposition}[quasi firm Fej\'er monotonicity of gDR operators]
\label{p:qfF-DR}
Let $A$ and $B$ be closed subsets of $X$ such that $A\cap B\neq \varnothing$ and $L$ be an affine subspace of $X$ containing $A\cup B$. Let also $w \in A\cap B$, $\varepsilon_1 \in \left[0, 1/3\right]$, $\varepsilon_2 \in \left[0, 1\right[$, $\delta \in \RPP$, $\lambda, \mu \in \left]0, 2\right]$, and $\alpha \in \big]0, 1+\hbeta\big]$ where $\hbeta :=\big(\frac{\lambda}{2-\lambda} +\frac{\mu}{2-\mu}\big)^{-1}$.   
Set $\Omega :=A\cap B\cap\ball{w}{\delta}$,
\begin{equation}
\gamma :=1 -\alpha +\alpha\left(1 +\tfrac{\lambda\varepsilon_1}{1 -\varepsilon_1}\right)\left(1 +\tfrac{\mu\varepsilon_2}{1 -\varepsilon_2}\right),
\quad\text{and}\quad \beta :=\tfrac{1-\alpha+\hbeta}{\alpha}.
\end{equation}
Suppose that $A$ and $B$ are $(\varepsilon_1, \delta)$- and $(\varepsilon_2, \sqrt{2}\delta)$-regular at $w$, respectively.
Then the following hold:
\begin{enumerate}
\item 
\label{p:qfF-DR_L}
$T_{\lambda, \mu}^\alpha$ is $\big(\Omega+ (L-L)^\perp, \gamma, \beta\big)$-quasi firmly Fej\'er monotone on $\ball{w}{\delta/2}\cap L$.
\item 
\label{p:qfF-DR_X}
$T_{\lambda, \mu}^\alpha$ is $\big(\Omega, \gamma, \beta\big)$-quasi firmly Fej\'er monotone on $\ball{w}{\delta/2}$.
\item 
\label{p:qfF-DR_22}
$T_{2, 2}^\alpha$ is $\big(\Omega+ (L-L)^\perp, \gamma, \beta\big)$-quasi firmly Fej\'er monotone on $\ball{w}{\delta/2}$.
\end{enumerate}	
\end{proposition}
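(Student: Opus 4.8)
The plan is to derive all three parts from the single-set estimate of Lemma~\ref{l:rproj}, by first composing (Lemma~\ref{l:composition}) the two relaxed projectors $P_A^\lambda$ and $P_B^\mu$ and then relaxing the composition by the parameter $\alpha$ (Lemma~\ref{l:averaged}); the passage from the slice $\ball{w}{\delta/2}\cap L$ used in part~\ref{p:qfF-DR_L} to the full ball $\ball{w}{\delta/2}$ in part~\ref{p:qfF-DR_22} will then be handled by an affine-reduction argument that is available precisely because, for $\lambda=\mu=2$, the shadow multiplier $\eta$ equals $1$.

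For part~\ref{p:qfF-DR_L}, apply Lemma~\ref{l:rproj} twice. Since $A$ is $(\varepsilon_1,\delta)$-regular at $w$, $P_A^\lambda$ is $\big(\Omega_A+(L-L)^\perp,\gamma_1,\beta_1\big)$-quasi firmly Fej\'er monotone on $\ball{w}{\delta/2}\cap L$ with $\Omega_A:=A\cap\ball{w}{\delta}$, $\gamma_1:=1+\frac{\lambda\varepsilon_1}{1-\varepsilon_1}$, $\beta_1:=\frac{2-\lambda}{\lambda}$; and since $B$ is $(\varepsilon_2,\sqrt2\delta)$-regular at $w$, $P_B^\mu$ is $\big(\Omega_B+(L-L)^\perp,\gamma_2,\beta_2\big)$-quasi firmly Fej\'er monotone on $\ball{w}{\sqrt2\delta/2}\cap L=\ball{w}{\delta/\sqrt2}\cap L$ with $\Omega_B:=B\cap\ball{w}{\sqrt2\delta}$, $\gamma_2:=1+\frac{\mu\varepsilon_2}{1-\varepsilon_2}$, $\beta_2:=\frac{2-\mu}{\mu}$. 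To invoke Lemma~\ref{l:composition} I need $P_A^\lambda(\ball{w}{\delta/2}\cap L)\subseteq\ball{w}{\delta/\sqrt2}\cap L$; this follows by combining Fact~\ref{f:ImBall}\ref{f:ImBall_reg} (which is exactly where $\varepsilon_1\leq1/3$ enters) with $P_A^\lambda(L)\subseteq L$ from Lemma~\ref{l:aff}\ref{l:aff_imag}. Lemma~\ref{l:composition} with $m=2$ then yields that $P_B^\mu P_A^\lambda$ is quasi firmly Fej\'er monotone on $\ball{w}{\delta/2}\cap L$ with modulus $\gamma_1\gamma_2$, constant $\big(\frac1{\beta_1}+\frac1{\beta_2}\big)^{-1}=\hbeta$, and reference set $\big(\Omega_A+(L-L)^\perp\big)\cap\big(\Omega_B+(L-L)^\perp\big)\supseteq\Omega+(L-L)^\perp$ (the inclusion because $\Omega\subseteq\Omega_A\cap\Omega_B$, and quasi firm Fej\'er monotonicity persists when the reference set is shrunk). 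Finally, since $\alpha\in\left]0,1+\hbeta\right]$, Lemma~\ref{l:averaged} applied with relaxation parameter $\alpha$ turns this into quasi firm Fej\'er monotonicity of $T_{\lambda,\mu}^\alpha=(1-\alpha)\Id+\alpha P_B^\mu P_A^\lambda$ on $\ball{w}{\delta/2}\cap L$ with modulus $1-\alpha+\alpha\gamma_1\gamma_2=\gamma$ and constant $\frac{1-\alpha+\hbeta}{\alpha}=\beta$. Part~\ref{p:qfF-DR_X} is the special case $L=X$ of part~\ref{p:qfF-DR_L}, since then $(L-L)^\perp=\{0\}$ and $\ball{w}{\delta/2}\cap L=\ball{w}{\delta/2}$.

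For part~\ref{p:qfF-DR_22} I take $\lambda=\mu=2$, so that $\eta:=1-\alpha+\alpha(1-\lambda)(1-\mu)=1$ (and $\hbeta=0$), and use the affine reduction. Given $x\in\ball{w}{\delta/2}$ and $x_+\in T_{2,2}^\alpha x$, set $y:=P_Lx$ and $u:=x-P_Lx\in(L-L)^\perp$. By Lemma~\ref{l:projL}\ref{l:projL_ball}, $y\in\ball{w}{\delta/2}\cap L$; by Lemma~\ref{l:shadow}, $y_+:=P_Lx_+\in T_{2,2}^\alpha y$ (so $y_+\in L$) and $x_+-y_+=\eta(x-y)=u$, whence $x=y+u$, $x_+=y_++u$, and $x-x_+=y-y_+\in L-L$. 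Writing a reference point $\ox\in\Omega+(L-L)^\perp$ as $\ox=v+t$ with $v\in\Omega\subseteq L$ and $t\in(L-L)^\perp$, orthogonality of $L-L$ and $(L-L)^\perp$ gives $\|x-\ox\|^2=\|y-v\|^2+\|u-t\|^2$ and $\|x_+-\ox\|^2=\|y_+-v\|^2+\|u-t\|^2$. Now apply part~\ref{p:qfF-DR_L} (with $\lambda=\mu=2$) to $y$, $y_+$, and the reference point $v\in\Omega\subseteq\Omega+(L-L)^\perp$ to obtain $\|y_+-v\|^2+\beta\|y-y_+\|^2\leq\gamma\|y-v\|^2$; adding $\|u-t\|^2$ to both sides and using $\gamma\geq1$ to replace $\|u-t\|^2$ by $\gamma\|u-t\|^2$ on the right then yields $\|x_+-\ox\|^2+\beta\|x-x_+\|^2\leq\gamma\|x-\ox\|^2$, which is the claim.

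The two applications of Lemma~\ref{l:rproj}, the constant identifications, and the closing application of Lemma~\ref{l:averaged} are routine bookkeeping. I expect two points to need care. The first is the composition-domain inclusion $P_A^\lambda(\ball{w}{\delta/2}\cap L)\subseteq\ball{w}{\delta/\sqrt2}\cap L$ in part~\ref{p:qfF-DR_L}, which is exactly why the regularity radius of $B$ is inflated to $\sqrt2\delta$ and why $\varepsilon_1\leq1/3$ is imposed (Fact~\ref{f:ImBall}\ref{f:ImBall_reg}). The second, and the one I regard as the main obstacle, is the closing step of part~\ref{p:qfF-DR_22}: one must apply the slice estimate to the $L$-component $v$ of the reference point rather than to $\ox$ itself, and then absorb the residual $\|u-t\|^2$ into the right-hand side via $\gamma\geq1$ — applying the slice estimate directly to $\ox$ does not close the inequality.
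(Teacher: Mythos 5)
Your proof is correct and follows essentially the same route as the paper: Lemma~\ref{l:rproj} applied to $A$ and $B$, the domain inclusion $P_A^\lambda(\ball{w}{\delta/2}\cap L)\subseteq\ball{w}{\delta/\sqrt{2}}\cap L$ via Fact~\ref{f:ImBall}\ref{f:ImBall_reg} and Lemma~\ref{l:aff}\ref{l:aff_imag}, composition via Lemma~\ref{l:composition}, relaxation via Lemma~\ref{l:averaged}, and the shadow argument of Lemma~\ref{l:shadow} for part~\ref{p:qfF-DR_22}. The only cosmetic difference is the closing step of \ref{p:qfF-DR_22}: the paper substitutes the translated reference point $\ox-(x-y)\in\Omega+(L-L)^\perp$ directly into the slice inequality (which closes it exactly, with no need for $\gamma\geq 1$), whereas you decompose $\ox$ orthogonally and absorb the $(L-L)^\perp$-term using $\gamma\geq 1$ --- both are valid.
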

\begin{proof} 
\ref{p:qfF-DR_L}: We first derive from Lemma~\ref{l:rproj} that $P_A^\lambda$ is $\big(\Omega+ (L-L)^\perp, \gamma_1, \beta_1\big)$-quasi firmly Fej\'er monotone on $\ball{w}{\delta/2}\cap L$ and that $P_B^\mu$ is $\big(\Omega+ (L-L)^\perp, \gamma_2, \beta_2\big)$-quasi firmly Fej\'er monotone on $\ball{w}{\delta/\sqrt{2}}\cap L$, where
\begin{equation}
\gamma_1 :=1 +\tfrac{\lambda\varepsilon_1}{1 -\varepsilon_1},\
\gamma_2 :=1 +\tfrac{\mu\varepsilon_2}{1 -\varepsilon_2},\
\beta_1 :=\tfrac{2 -\lambda}{\lambda},\ 
\text{~and~} \beta_2 :=\tfrac{2 -\mu}{\mu}.
\end{equation} 
Now let $x\in \ball{w}{\delta/2}\cap L$. On the one hand, Fact~\ref{f:ImBall}\ref{f:ImBall_reg} yields $P_A^\lambda x \subseteq \ball{w}{\delta/\sqrt{2}}$. On the other hand, $P_A^\lambda x =(1 -\lambda)x +\lambda P_Ax \subseteq L$ since $x\in L$, $P_Ax\subseteq A\subseteq L$, and $L$ is an affine subspace. We deduce that $P_A^\lambda(\ball{w}{\delta/2}\cap L)\subseteq \ball{w}{\delta/\sqrt{2}}\cap L$. Noting that $\hbeta =(\frac{1}{\beta_1} +\frac{1}{\beta_1})^{-1}$, we apply Lemma~\ref{l:composition} to $(P_A^\lambda, P_B^\mu)$ to obtain the $(\Omega+ (L-L)^\perp, \gamma_1\gamma_2, \hbeta)$-quasi firm Fej\'er monotonicity of $P_B^\mu P_A^\lambda$ on $\ball{w}{\delta/2}\cap L$. The conclusion follows from Lemma~\ref{l:averaged}\ref{l:averaged_ori}--\ref{l:averaged_avg} applied to the operators $P_B^\mu P_A^\lambda$ and $T_{\lambda,\mu}^\alpha = (1-\alpha)\Id+\alpha P_B^\mu P_A^\lambda$.

\ref{p:qfF-DR_X}: Apply \ref{p:qfF-DR_L} with $L =X$.

\ref{p:qfF-DR_22}: Let $x\in \ball{w}{\delta/2}$ and $x_+\in T_{2,2}^\alpha x$. Define $y =P_Lx$ and $y_+ =P_Lx_+$. Then by Lemma~\ref{l:projL}\ref{l:projL_ball}, $y\in \ball{w}{\delta/2}\cap L$ and by Lemma~\ref{l:shadow}\ref{l:shadow_step}--\ref{l:shadow_diff}, $y_+\in  T_{2,2}^\alpha y$ and $v :=x_+ -y_+ =x -y$. Applying \ref{p:qfF-DR_L} to $T_{2,2}^\alpha$ yields
\begin{equation}
\label{e:L-qfF}
\forall \overline{y}\in \Omega+ (L-L)^\perp,\quad 
\|y_+ -\overline{y}\|^2 +\beta\|y -y_+\|^2\leq \gamma\|y -\overline{y}\|^2.
\end{equation}
Now let $\ox\in \Omega+ (L-L)^\perp$. From Lemma~\ref{l:projL}\ref{l:projL_perp}, we observe that $v\in (L -L)^\perp$, hence $\ox -v\in \Omega+ (L-L)^\perp$. Substituting $\overline{y} =\ox -v =\ox -(x_+ -y_+) =\ox -(x -y)$ into \eqref{e:L-qfF} completes the proof.
\end{proof}

\section{Quasi coercivity}
\label{s:qcoer}

Let $C$ and $U$ be nonempty subsets of $X$ and let $\nu\in \RPP$. Recall from \cite[Definition~3.3]{DP16} that an operator $T\colon X\To X$ is \emph{$(C, \nu)$-quasi coercive} on $U$ if
\begin{equation}
\forall x\in U,\ \forall x_+\in Tx,\quad \|x -x_+\|\geq \nu d_C(x),
\end{equation}
and \emph{$C$-quasi coercive} around $w\in X$ if it is $(C, \nu)$-quasi coercive on $\ball{w}{\delta}$ for some $\nu\in \RPP$ and $\delta\in \RPP$.

In this section, we show quasi coercivity of gDR operators under different assumptions on the system of sets. In particular, under \emph{affine-hull regularity} assumption, Proposition~\ref{p:coer-DR} improves some existing results on quasi coercivity (see Remark~\ref{r:imprv_qcoer}), while under \emph{linear regularity} assumption and some parameter restriction, Proposition~\ref{p:coer-DR-lin} proves the quasi coercivity of gDR operators.

\begin{lemma}[averaged quasi coercive operators]
\label{l:averaged-coer}
Let $C$ and $U$ be nonempty subsets of $X$, $\nu\in \RPP$, $\lambda \in \RPP$, and let $T\colon X\To X$ be a set-valued operator. Then $T$ is $(C, \nu)$-quasi coercive on $U$ if and only if $T^\lambda :=(1 -\lambda)\Id +\lambda T$ is $(C, \lambda\nu)$-quasi coercive on $U$.
\end{lemma}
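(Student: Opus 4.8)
The statement to prove, Lemma~\ref{l:averaged-coer}, is an elementary equivalence between quasi coercivity of $T$ and of its relaxation $T^\lambda = (1-\lambda)\Id + \lambda T$. The plan is to unwind the definitions and exploit the simple algebraic relationship between a displacement step of $T$ and a displacement step of $T^\lambda$.

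First I would fix $x \in U$ and observe that the set of displacement vectors $\{x - x_+ : x_+ \in Tx\}$ scales linearly under relaxation: if $x_+ \in Tx$, then the corresponding point of $T^\lambda x$ is $x_\lambda := (1-\lambda)x + \lambda x_+$, and directly $x - x_\lambda = x - (1-\lambda)x - \lambda x_+ = \lambda(x - x_+)$, hence $\|x - x_\lambda\| = \lambda \|x - x_+\|$. Conversely every element of $T^\lambda x$ arises this way. Since $d_C(x)$ does not involve the operator at all, the inequality $\|x - x_+\| \geq \nu\, d_C(x)$ for all $x_+ \in Tx$ holds if and only if $\lambda\|x - x_+\| \geq \lambda\nu\, d_C(x)$ for all $x_+ \in Tx$, which by the displacement identity is exactly $\|x - x_\lambda\| \geq (\lambda\nu) d_C(x)$ for all $x_\lambda \in T^\lambda x$. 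Taking the conjunction over all $x \in U$ gives the claimed equivalence.

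I expect there to be no real obstacle here; the only point requiring a word of care is that the correspondence $x_+ \mapsto (1-\lambda)x + \lambda x_+$ between $Tx$ and $T^\lambda x$ is a bijection, so that quantifying over one set is the same as quantifying over the other — but this is immediate from the definition $T^\lambda = (1-\lambda)\Id + \lambda T$ together with $\lambda \in \RPP$ (so $\lambda \neq 0$, ensuring invertibility of the affine map $t \mapsto (1-\lambda)x + \lambda t$). One could also phrase the argument as a single chain of "if and only if" statements rather than proving both directions separately; I would likely write it that way for brevity. No earlier results from the excerpt are needed beyond the definitions of $T^\lambda$ and of $(C,\nu)$-quasi coercivity.

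In summary, the proof is: let $x \in U$ and $x_+ \in Tx$, set $x_\lambda = (1-\lambda)x + \lambda x_+ \in T^\lambda x$, note $x - x_\lambda = \lambda(x - x_+)$, and conclude that $\|x - x_+\| \geq \nu d_C(x)$ for all such $x_+$ is equivalent to $\|x - x_\lambda\| \geq \lambda\nu\, d_C(x)$ for all such $x_\lambda$; since $x_+ \mapsto x_\lambda$ is onto $T^\lambda x$, this is exactly $(C,\lambda\nu)$-quasi coercivity of $T^\lambda$ on $U$.
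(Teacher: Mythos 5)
Your proposal is correct and is essentially the paper's proof: the paper also uses the displacement identity $x - x_+ = \lambda(x - s)$ for $s \in Tx$, $x_+ = (1-\lambda)x + \lambda s \in T^\lambda x$, and handles the converse by writing $T = (1 - \tfrac{1}{\lambda})\Id + \tfrac{1}{\lambda}T^\lambda$, which is just your bijection argument phrased symmetrically.
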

\begin{proof}
Assume that $T$ is $(C, \nu)$-quasi coercive on $U$. Let $x\in U$ and let $x_+\in T^\lambda x$. 
Then there exists $s\in Tx$ such that $x_+ =(1 -\lambda)x +\lambda s$. We obtain that 
$\|x -x_+\| =\lambda\|x -s\| \geq\lambda\nu d_C(x)$, and $T^\lambda$ is thus $(C, \lambda\nu)$-quasi coercive on $U$. Conversely, note that $T =(1 -\frac{1}{\lambda})\Id +\frac{1}{\lambda}T^\lambda$.
\end{proof}

\begin{lemma}[global quasi coercivity]
\label{l:global-coer}
Let $C$ be a nonempty subset of $X$, $L$ a closed subset of $X$, and $T\colon X\to X$ a continuous single-valued operator. Suppose that $\Fix T\cap L\subseteq C$ and that, for every $w\in C$, $T$ is $C$-quasi coercive on $\ball{w}{\delta}\cap L$ for some $\delta\in \RPP$. Then $T$ is $C$-quasi coercive on $S\cap L$ for every bounded set $S$ of $X$.
\end{lemma}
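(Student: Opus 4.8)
The plan is to argue by contradiction via a compactness argument. Suppose the conclusion fails: then there is a bounded set $S$ such that $T$ is not $C$-quasi coercive on $S\cap L$, i.e., no pair $(\nu,\delta)\in\RPP\times\RPP$ works. Taking $\nu_n=1/n$, I would extract a sequence $(x_n)_\nnn$ in $S\cap L$ with $\|x_n-Tx_n\|<\frac1n d_C(x_n)$ for every $n$. In particular $d_C(x_n)>0$, so $x_n\notin C$, and $\|x_n-Tx_n\|\to 0$ relative to the (bounded, hence finite) quantities $d_C(x_n)$; more precisely $\|x_n-Tx_n\|/d_C(x_n)\to 0$.

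Next I would pass to a convergent subsequence. Since $S$ is bounded, $(x_n)_\nnn$ has a subsequence, still denoted $(x_n)$, converging to some $\ox$; because $L$ is closed, $\ox\in L$. The $d_C(x_n)$ are bounded (as $x_n$ ranges over a bounded set and $C\neq\varnothing$), so $\|x_n-Tx_n\|\to 0$. By continuity of $T$, $\|x_n-Tx_n\|\to\|\ox-T\ox\|$, whence $\ox=T\ox$, i.e., $\ox\in\Fix T\cap L\subseteq C$ by hypothesis. So $\ox\in C$, and the local hypothesis applies at $w=\ox$: there exist $\nu>0$, $\delta>0$ with $\|x-Tx\|\geq\nu d_C(x)$ for all $x\in\ball{\ox}{\delta}\cap L$. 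For $n$ large, $x_n\in\ball{\ox}{\delta}\cap L$, so $\|x_n-Tx_n\|\geq\nu d_C(x_n)$; combined with $\|x_n-Tx_n\|<\frac1n d_C(x_n)$ this gives $\nu d_C(x_n)<\frac1n d_C(x_n)$. Since $d_C(x_n)>0$, we get $\nu<1/n$ for all large $n$, contradicting $\nu>0$.

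The only subtlety — and the one place the argument could break — is the step $\|x_n-Tx_n\|\to 0$: this needs $d_C(x_n)$ to stay bounded. That is immediate since $x_n\in S$ bounded, so $d_C(x_n)\leq d_C(x_1)+\operatorname{diam}(S)$ (picking any point of $C$), hence the negated quasi-coercivity inequality $\|x_n-Tx_n\|<\frac1n d_C(x_n)$ forces $\|x_n-Tx_n\|\to 0$. A second minor point is that one should phrase the negation carefully: "$T$ is not $C$-quasi coercive on $S\cap L$" unpacks to "for every $\nu\in\RPP$ there is $x\in S\cap L$ with $\|x-Tx\|<\nu\, d_C(x)$", which is exactly what feeds the construction of $(x_n)$. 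I do not expect any real obstacle beyond bookkeeping; the engine is standard compactness plus continuity of $T$ plus the fixed-point-to-$C$ inclusion.

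\begin{proof}
Suppose the claim is false. Then there exists a bounded set $S\subseteq X$ such that $T$ is not $C$-quasi coercive on $S\cap L$; that is, for every $n\in\NN$ with $n\geq 1$ there exists $x_n\in S\cap L$ with
\begin{equation}
\|x_n -Tx_n\| <\tfrac{1}{n}\, d_C(x_n).
\end{equation}
In particular $d_C(x_n) >0$, so $x_n\notin C$. Fix any $c\in C$. Since $S$ is bounded, $(d_C(x_n))_\nnn$ is bounded by $\|c\| +\sup_{x\in S}\|x\|$, and therefore $\|x_n -Tx_n\|\to 0$. As $S$ is bounded, after passing to a subsequence we may assume $x_n\to \ox$ for some $\ox\in X$; since $L$ is closed and each $x_n\in L$, we have $\ox\in L$. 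By continuity of $T$, $\|x_n -Tx_n\|\to \|\ox -T\ox\|$, and hence $\ox =T\ox$, i.e., $\ox\in \Fix T\cap L\subseteq C$. Thus $\ox\in C$, and by assumption there exist $\nu\in \RPP$ and $\delta\in \RPP$ such that $T$ is $(C,\nu)$-quasi coercive on $\ball{\ox}{\delta}\cap L$. For all sufficiently large $n$ we have $x_n\in \ball{\ox}{\delta}\cap L$, whence
\begin{equation}
\nu\, d_C(x_n)\leq \|x_n -Tx_n\| <\tfrac{1}{n}\, d_C(x_n).
\end{equation}
Since $d_C(x_n) >0$, this yields $\nu <1/n$ for all large $n$, contradicting $\nu >0$. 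Therefore $T$ is $C$-quasi coercive on $S\cap L$ for every bounded set $S$ of $X$.
\end{proof}
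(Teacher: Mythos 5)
Your proof is correct and follows essentially the same argument as the paper: negate quasi coercivity to obtain a sequence $x_n\in S\cap L$ with $\|x_n-Tx_n\|<\varepsilon_n d_C(x_n)$ (your $\varepsilon_n=1/n$), use boundedness of $S$ to get $\|x_n-Tx_n\|\to 0$ and a convergent subsequence, invoke continuity of $T$ and closedness of $L$ to place the limit in $\Fix T\cap L\subseteq C$, and then contradict the local quasi coercivity at that limit point. No gaps; the bookkeeping (positivity of $d_C(x_n)$ and boundedness of $(d_C(x_n))_\nnn$) is handled just as in the paper.
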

\begin{proof}
Let $S$ be a bounded set of $X$ and suppose on the contrary that $T$ is not $C$-quasi coercive on $S\cap L$. Then there exist sequences $\varepsilon_n\downarrow 0$ and $x_n\in S\cap L$ such that
\begin{equation}
\forall\nnn,\quad 0\leq \|x_n- T x_n\|< \varepsilon_n d_{C}(x_n).
\end{equation}
Since $(x_n)_\nnn$ is bounded, so is $(d_{C}(x_n))_\nnn$, and hence $x_n- T x_n\to 0$. 
Extracting a convergent subsequence without relabeling, we can assume $x_n\to \ox$. It follows from the continuity of $T$ and the closedness of $L$ that $\ox\in \Fix T\cap L\subseteq C$. In turn, there exist $\nu\in \RPP$ and $\delta\in \RPP$ such that $T$ is $(C, \nu)$-quasi coercive on $\ball{\ox}{\delta}\cap L$. Thus, for all $n$ sufficiently large,
\begin{equation}
\varepsilon_n d_{C}(x_n)> \|x_n- T x_n\|\geq \nu d_{C}(x_n),
\end{equation}
which is a contradiction since $\varepsilon_n\downarrow 0$ and $d_{C}(x_n)> 0$.
\end{proof}

\subsection{In the presence of affine-hull regularity}

In this section, we aim to improve the estimate for quasi coercivity constant previously obtained in \cite{DP16}.
To proceed, we need the following technical lemma. 

\begin{lemma}
\label{l:const}	
Let $\theta\in \left[0, 1\right[$, $\mu\in \left]0, 2\right]$, and let $u,v\in X$ be such that $\scal{u}{v}\geq -\theta\|u\|\|v\|$. Then
\begin{equation}
\|u +\mu v\|^2\geq \max\Big\{\mu^2(1-\theta^2)\|v\|^2, \frac{4\mu^2(1-\theta^2)}{\big(|1-\mu| +\sqrt{(1-\mu)^2 +4\mu(1-\theta^2)}\big)^2}\|u+v\|^2\Big\}.
\end{equation}
\end{lemma}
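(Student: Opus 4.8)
The plan is to establish the two lower bounds separately. Both follow from the expansion $\|u+\mu v\|^2=\|u\|^2+2\mu\scal{u}{v}+\mu^2\|v\|^2$ together with the hypothesis $\scal{u}{v}\ge-\theta\|u\|\,\|v\|$. For the first bound, the expansion gives at once $\|u+\mu v\|^2\ge\|u\|^2-2\mu\theta\|u\|\,\|v\|+\mu^2\|v\|^2=(\|u\|-\mu\theta\|v\|)^2+\mu^2(1-\theta^2)\|v\|^2\ge\mu^2(1-\theta^2)\|v\|^2$.

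For the second bound, abbreviate $c:=|1-\mu|$, $D:=\sqrt{(1-\mu)^2+4\mu(1-\theta^2)}$, and
$$\rho:=\frac{4\mu^2(1-\theta^2)}{\big(c+D\big)^2},$$
so that the goal becomes $\|u+\mu v\|^2\ge\rho\|u+v\|^2$. Since $\theta<1$ we have $D\ge c\ge0$ and $D>0$; moreover $D^2-c^2=4\mu(1-\theta^2)=(D-c)(D+c)$, which lets us rewrite $\rho=\tfrac{\mu(D-c)}{D+c}$. From this form one reads off $0\le\rho\le\mu$, and a short computation using $(1+\mu)^2=(1-\mu)^2+4\mu$ shows in addition $\rho\le1$ and $\rho\le\mu^2$; in particular $1-\rho\ge0$ and $\mu^2-\rho\ge0$.

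The crucial point is an algebraic identity relating $\rho$, $\mu$, and $\theta$. Rearranging $\rho(D+c)=\mu(D-c)$ yields $(\mu-\rho)D=(\mu+\rho)c$; squaring this and substituting $D^2=c^2+4\mu(1-\theta^2)$ and $(\mu+\rho)^2-(\mu-\rho)^2=4\mu\rho$ gives $4\mu(1-\theta^2)(\mu-\rho)^2=4\mu\rho c^2$, hence $(1-\theta^2)(\mu-\rho)^2=\rho(1-\mu)^2$. Combining this with the elementary identity $(1-\rho)(\mu^2-\rho)=(\mu-\rho)^2-\rho(1-\mu)^2$ (checked by expanding both sides) produces $(1-\rho)(\mu^2-\rho)=\theta^2(\mu-\rho)^2$, equivalently $\sqrt{(1-\rho)(\mu^2-\rho)}=\theta(\mu-\rho)$ since $\theta\ge0$ and $\mu\ge\rho$.

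To conclude, expand once more and use $\mu-\rho\ge0$ together with $\scal{u}{v}\ge-\theta\|u\|\,\|v\|$:
$$\|u+\mu v\|^2-\rho\|u+v\|^2=(1-\rho)\|u\|^2+2(\mu-\rho)\scal{u}{v}+(\mu^2-\rho)\|v\|^2\ge(1-\rho)\|u\|^2-2\theta(\mu-\rho)\|u\|\,\|v\|+(\mu^2-\rho)\|v\|^2.$$
By the identity above the right-hand side equals $\big(\sqrt{1-\rho}\,\|u\|-\sqrt{\mu^2-\rho}\,\|v\|\big)^2\ge0$, so $\|u+\mu v\|^2\ge\rho\|u+v\|^2$, which together with the first bound proves the claim. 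I expect the main obstacle to be the bookkeeping in the second bound: rewriting $\rho$ in product form, verifying $0\le\rho\le\min\{1,\mu^2\}$, and deriving the identity $(1-\rho)(\mu^2-\rho)=\theta^2(\mu-\rho)^2$ — all elementary but easy to slip on.
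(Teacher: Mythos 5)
Your proof is correct and follows essentially the same route as the paper: both reduce the second bound to the nonnegativity of the quadratic form $(1-\rho)\|u\|^2-2\theta(\mu-\rho)\|u\|\,\|v\|+(\mu^2-\rho)\|v\|^2$ using $\mu-\rho\geq 0$ and the hypothesis on $\scal{u}{v}$, and both ultimately exploit that $\rho$ makes the associated discriminant vanish --- your identity $(1-\rho)(\mu^2-\rho)=\theta^2(\mu-\rho)^2$ is exactly the paper's determinant condition, which the paper certifies via the trace--determinant test for a $2\times2$ matrix rather than by completing the square. The only real difference is bookkeeping: your perfect-square argument needs the extra (correct) checks $\rho\leq 1$ and $\rho\leq\mu^2$, which the paper's positive-semidefiniteness criterion avoids, while your derivation of the vanishing identity via $(\mu-\rho)D=(\mu+\rho)c$ makes explicit what the paper leaves as ``one can directly check''.
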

\begin{proof}
First, it follows from $\scal{u}{v}\geq -\theta\|u\|\|v\|$ that
\begin{subequations}
\begin{align}
\|u +\mu v\|^2
&\geq \|u\|^2+ \mu^2\|v\|^2 -2\mu\theta\|u\| \|v\| \\
&=(\|u\|- \mu\theta\|v\|)^2+ \mu^2(1-\theta^2)\|v\|^2 
\geq \mu^2(1-\theta^2)\|v\|^2.
\end{align}
\end{subequations} 
Next, we show that 
\begin{equation}\label{e:170813a}
\|u +\mu v\|^2\geq \xi\|u+v\|^2,
\quad\text{where~} \xi :=\frac{4\mu^2(1-\theta^2)}{\big(|1-\mu| +\sqrt{(1-\mu)^2 +4\mu(1-\theta^2)}\big)^2}.
\end{equation}
Observe that $|1-\mu|+\sqrt{(1-\mu)^2 +4\mu(1-\theta^2)}\geq \sqrt{4\mu(1-\theta^2)}> 0$, so
\begin{equation}
\xi= \frac{4\mu^2(1-\theta^2)}{\big(|1-\mu| +\sqrt{(1-\mu)^2 +4\mu(1-\theta^2)}\big)^2}\leq \frac{4\mu^2(1-\theta^2)}{4\mu(1-\theta^2)} =\mu.
\end{equation}
Now \eqref{e:170813a} is equivalent to
\begin{equation}
(1-\xi)\|u\|^2+(\mu^2-\xi)\|v\|^2+2(\mu-\xi)\scal{u}{v}\geq 0.
\end{equation}
Since $\mu-\xi\geq 0$ and $\scal{u}{v}\geq -\theta\|u\|\|v\|$, it suffices to prove that
\begin{equation}
\forall u\in X,\ \forall v\in X,\quad (1-\xi)\|u\|^2+(\mu^2-\xi)\|v\|^2-2\theta(\mu-\xi)\|u\|\|v\|\geq 0.
\end{equation}
The latter one can be written as
\begin{equation}
\forall u\in X,\ \forall v\in X,\quad 
\left[\begin{matrix}
\|u\| & \|v\|
\end{matrix}\right]
M
\left[\begin{matrix}
\|u\| \\ \|v\|
\end{matrix}\right]\geq 0 
\quad\text{with}\quad 
M :=\left[\begin{matrix}
1-\xi & \theta(\xi-\mu)\\
\theta(\xi-\mu) & \mu^2-\xi
\end{matrix}\right],
\end{equation}
which is equivalent to positive semidefiniteness of $M$.
Because $M$ is a symmetric $2\times 2$ matrix whose trace $(1-\xi) +(\mu^2-\xi) =(1-\mu)^2 +2(\mu-\xi)\geq 0$, 
it is positive semidefinite if and only if its determinant
\begin{equation}\label{e:170813b}
(1-\xi)(\mu^2-\xi) -\theta^2(\mu-\xi)^2
=(1-\theta^2)\xi^2 -(1+\mu^2-2\mu\theta^2)\xi +\mu^2(1-\theta^2)\geq 0.
\end{equation}
Finally, one can directly check that $\xi$ in \eqref{e:170813a} is a solution of \eqref{e:170813b}. The proof is complete.
\end{proof}

\begin{lemma}
\label{l:Lcoer}
Let $A$ and $B$ be closed subsets of $X$ with $A\cap B\neq \varnothing$, $U$ a nonempty subset of $X$, $\lambda,\mu\in \left]0, 2\right]$, $\alpha \in \RPP$, and $\nu\in \RPP$. Suppose that $T_{\lambda, \mu}^\alpha$ is $(A\cap B, \nu)$-quasi coercive on $U\cap L$ with $L:= \aff(A\cup B)$. 
Then $T_{\lambda, \mu}^\alpha$ is $\big((A\cap B)+ (L-L)^\perp, \nu\big)$-quasi coercive on $P_L^{-1}(U\cap L)$. Moreover, if $\min\{\lambda,\mu\}<2$, then $T_{\lambda, \mu}^\alpha$ is also $(A\cap B, \nu')$-quasi coercive on $P_L^{-1}(U\cap L)$ with $\nu':=\min\{\nu,\alpha(\lambda+\mu-\lambda\mu)\}$.
\end{lemma}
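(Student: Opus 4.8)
\textbf{Proof proposal for Lemma~\ref{l:Lcoer}.}

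The plan is to exploit the decomposition of $X$ relative to the affine subspace $L = \aff(A\cup B)$ provided by Lemma~\ref{l:projL} and Lemma~\ref{l:shadow}. For the first assertion, take $x\in P_L^{-1}(U\cap L)$, so that $y := P_L x\in U\cap L$, and let $x_+\in T_{\lambda,\mu}^\alpha x$. Set $y_+ := P_L x_+$. By Lemma~\ref{l:shadow}\ref{l:shadow_step}, $y_+\in T_{\lambda,\mu}^\alpha y$, and by Lemma~\ref{l:shadow}\ref{l:shadow_dist}, $\|x - x_+\|^2 = \|y - y_+\|^2 + (1-\eta)^2\|x - y\|^2 \geq \|y - y_+\|^2$, where $\eta := 1-\alpha+\alpha(1-\lambda)(1-\mu)$. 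Since $y\in U\cap L$ and $T_{\lambda,\mu}^\alpha$ is $(A\cap B,\nu)$-quasi coercive there, $\|y - y_+\| \geq \nu\, d_{A\cap B}(y)$. Finally, by Lemma~\ref{l:aff}\ref{l:aff_dist} applied to $C = A\cap B$ (which is contained in $L$), $d_{A\cap B}^2(x) = d_{A\cap B}^2(y) + d_L^2(x)$, hence the distance to $(A\cap B)+(L-L)^\perp$, which equals $d_{A\cap B}(y)$ since the nearest point decomposes orthogonally, satisfies $d_{(A\cap B)+(L-L)^\perp}(x) = d_{A\cap B}(P_L x) = d_{A\cap B}(y)$. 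Chaining these inequalities gives $\|x - x_+\| \geq \nu\, d_{(A\cap B)+(L-L)^\perp}(x)$, which is exactly $\big((A\cap B)+(L-L)^\perp,\nu\big)$-quasi coercivity on $P_L^{-1}(U\cap L)$.

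For the second assertion, assume $\min\{\lambda,\mu\}<2$, so $\lambda+\mu-\lambda\mu = 1-(1-\lambda)(1-\mu) > 0$ and $\eta = 1-\alpha(\lambda+\mu-\lambda\mu) \neq 1$, i.e.\ $1-\eta = \alpha(\lambda+\mu-\lambda\mu)$. Here we cannot merely drop the $d_L^2(x)$ term; instead we keep it. From Lemma~\ref{l:shadow}\ref{l:shadow_dist} we have $\|x - x_+\|^2 = \|y - y_+\|^2 + (1-\eta)^2\|x - y\|^2$, and $\|x - y\| = d_L(x)$. Combining with $\|y - y_+\| \geq \nu\, d_{A\cap B}(y)$ and $d_{A\cap B}^2(x) = d_{A\cap B}^2(y) + d_L^2(x)$ yields
\begin{equation}
\|x - x_+\|^2 \geq \nu^2 d_{A\cap B}^2(y) + (1-\eta)^2 d_L^2(x) \geq \min\{\nu^2,(1-\eta)^2\}\big(d_{A\cap B}^2(y) + d_L^2(x)\big) = \min\{\nu,|1-\eta|\}^2\, d_{A\cap B}^2(x).
\end{equation}
Since $1-\eta = \alpha(\lambda+\mu-\lambda\mu) > 0$, this gives $\|x - x_+\| \geq \min\{\nu,\alpha(\lambda+\mu-\lambda\mu)\}\, d_{A\cap B}(x) = \nu' d_{A\cap B}(x)$, as claimed.

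The main conceptual point — which is where one has to be careful rather than where a genuine obstacle lies — is the bookkeeping that $d_{(A\cap B)+(L-L)^\perp}(x) = d_{A\cap B}(P_L x)$: this uses that $A\cap B\subseteq L$ together with the orthogonal splitting $\|x - (c+q)\|^2 = \|P_L x - c\|^2 + \|x - P_L x - q\|^2$ for $c\in A\cap B$, $q\in (L-L)^\perp$ (essentially Lemma~\ref{l:aff}\ref{l:aff_proj}), so the infimum is minimized at $q = x - P_L x$, reducing it to $d_{A\cap B}(P_L x)$. Everything else is routine manipulation of the shadow identities from Lemma~\ref{l:shadow} and the distance decomposition from Lemma~\ref{l:aff}\ref{l:aff_dist}; no delicate estimate (such as Lemma~\ref{l:const}) is needed here.
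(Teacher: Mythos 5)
Your proposal is correct and follows essentially the same route as the paper: project onto $L$, use Lemma~\ref{l:shadow}\ref{l:shadow_step} and \ref{l:shadow_dist} to compare $\|x-x_+\|$ with $\|y-y_+\|$, identify $d_{(A\cap B)+(L-L)^\perp}(x)=d_{A\cap B}(P_Lx)$ via Lemma~\ref{l:aff}\ref{l:aff_proj}, and for the second claim retain the $(1-\eta)^2 d_L^2(x)$ term together with Lemma~\ref{l:aff}\ref{l:aff_dist} and $1-\eta=\alpha(\lambda+\mu-\lambda\mu)>0$. No gaps; this matches the paper's argument step for step.
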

\begin{proof}
Let $x\in P_L^{-1}(U\cap L)$ and $x_+\in T_{\lambda,\mu}^{\alpha}x$. Define $y =P_Lx$ and $y_+ =P_Ly$, then $y\in U\cap L$ and, by Lemma~\ref{l:shadow}\ref{l:shadow_step}, $y_+\in T_{\lambda,\mu}^{\alpha}y$ . By assumption,
\begin{equation}\label{e:170716b}
\|y-y_+\|\geq \nu d_{A\cap B}(y).
\end{equation}
Setting $\eta :=(1-\alpha)+\alpha(1-\lambda)(1-\mu)$, we obtain from Lemma~\ref{l:shadow}\ref{l:shadow_dist} that
\begin{equation}\label{e:170716c}
\|x -x_+\|^2 =\|y -y_+\|^2 +(1-\eta)^2\|x -y\|^2\geq \|y -y_+\|^2.
\end{equation}
By Lemma~\ref{l:projL}\ref{l:projL_perp}, $x -y =x -P_Lx\in (L-L)^\perp$ and then, by Lemma~\ref{l:aff}\ref{l:aff_proj}, 
\begin{equation}
d_{A\cap B}(y)=d_{(A\cap B)+(L-L)^\perp}(y)
=d_{(A\cap B)+(L-L)^\perp}(y+(x-y))
=d_{(A\cap B)+(L-L)^\perp}(x).
\end{equation}
This together with \eqref{e:170716b} and \eqref{e:170716c} yields
\begin{equation}
\|x -x_+\|\geq \|y -y_+\|\geq \nu d_{A\cap B}(y)
=\nu d_{(A\cap B)+(L-L)^\perp}(x),
\end{equation}
i.e., $T_{\lambda, \mu}^\alpha$ is $\big((A\cap B)+ (L-L)^\perp,\nu\big)$-quasi coercive on $P_L^{-1}(U\cap L)$.

Now assume that $\min\{\lambda,\mu\}<2$. Then $\eta<1$. Combining \eqref{e:170716b}, \eqref{e:170716c}, and Lemma~\ref{l:aff}\ref{l:aff_dist}, we deduce that
\begin{subequations}
\begin{align}
\|x -x_+\|^2&\geq \nu^2 d_{A\cap B}^2(y) +(1-\eta)^2 d_L^2(x) \\
&\geq \min\{\nu^2,(1-\eta)^2\}\big(d_{A\cap B}^2(y) +d_L^2(x)\big)
=(\nu')^2 d_{A\cap B}^2(x),
\end{align}
\end{subequations}
which means that $T_{\lambda,\mu}^{\alpha}$ is $(A\cap B,\nu')$-quasi coercive on $P_L^{-1}(U\cap L)$.
\end{proof}

\begin{proposition}[quasi coercivity of gDR operators under affine-hull regularity]
\label{p:coer-DR}
Let $A$ and $B$ be closed subsets of $X$ such that $A\cap B\neq \varnothing$. 
Let also $w \in A\cap B$, $\varepsilon \in \left[0, 1/3\right]$,  $\delta \in \RPP$, $\kappa\in\RPP$, $\lambda,\mu\in \left]0, 2\right]$, and $\alpha \in \RPP$. 
Suppose that $A$ is $(\varepsilon, \delta)$-regular at $w$, that $\{A, B\}$ is $\kappa$-linearly regular on $\ball{w}{\delta/2}$, and that the CQ-number $\theta :=\theta_{A,B,L}(w,\sqrt{2}\delta)< 1$ with $L :=\aff(A\cup B)$. 
Define
\begin{equation}
\nu:= \frac{\alpha\sqrt{1 -\theta^2}}{\kappa}\min\Big\{\lambda, \frac{2\mu}{|1-\mu|+\sqrt{(1-\mu)^2+4\mu(1-\theta^2)}}\Big\}.
\end{equation}
Then the following hold:
\begin{enumerate}
\item\label{p:coer-DR_L} 
$T_{\lambda, \mu}^\alpha$ is $\big(A\cap B,\nu\big)$-quasi coercive on $\ball{w}{\delta/2}\cap L$.
\item\label{p:coer-DR_gen}
$T_{\lambda, \mu}^\alpha$ is $\big((A\cap B)+ (L-L)^\perp,\nu\big)$-quasi coercive on $\ball{w}{\delta/2}$.
\item\label{p:coer-DR_less}
If $\min\{\lambda,\mu\}<2$, then $T_{\lambda, \mu}^\alpha$ is $(A\cap B,\nu')$-quasi coercive on $\ball{w}{\delta/2}$ with
$\nu':=\min\{\nu,\alpha(\lambda+\mu-\lambda\mu)\}$.
\end{enumerate}
Additionally, if $\lambda\neq 1$, $A$ is an affine subspace of $X$, and $\{A, B\}$ is $\kappa$-linearly regular on $\ball{w}{\sqrt{2}\delta/2}$, then we can choose
\begin{equation}
\nu= \frac{\alpha\sqrt{1-\theta^2}}{\kappa}\min\{\lambda, \mu|1-\lambda|\}.
\end{equation}
\end{proposition}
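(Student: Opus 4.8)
The plan is to establish \ref{p:coer-DR_L} by a single-step estimate and then obtain \ref{p:coer-DR_gen} and \ref{p:coer-DR_less} from Lemma~\ref{l:Lcoer}, using that $\ball{w}{\delta/2}\subseteq P_L^{-1}(\ball{w}{\delta/2}\cap L)$ by Lemma~\ref{l:projL}\ref{l:projL_ball}. For \ref{p:coer-DR_L}, Lemma~\ref{l:averaged-coer} reduces matters to showing that $P_B^\mu P_A^\lambda$ is $(A\cap B,\nu/\alpha)$-quasi coercive on $\ball{w}{\delta/2}\cap L$. Given such an $x$ and $q\in P_B^\mu P_A^\lambda x$, I would unpack the step as $q=(1-\mu)r+\mu b$ with $a\in P_Ax$, $r=(1-\lambda)x+\lambda a\in P_A^\lambda x$, $b\in P_Br$, and record the identities $x-q=\lambda(x-a)+\mu(r-b)$ and $\lambda(x-a)+(r-b)=x-b$.

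First I would place the relevant points in the appropriate balls: Fact~\ref{f:ImBall}\ref{f:ImBall_P} gives $a\in A\cap\ball{w}{\delta}$, Fact~\ref{f:ImBall}\ref{f:ImBall_reg} gives $r\in\ball{w}{\delta/\sqrt{2}}$ (here the hypothesis $\varepsilon\le 1/3$ is used), and then $\|b-w\|\le 2\|r-w\|\le\sqrt{2}\delta$; since $L$ is affine and contains $a$ and $b$, we also get $x,r\in L$, hence $x-a\in\pnX{A}(a)\cap(L-a)$ and $r-b\in\pnX{B}(b)\cap(L-b)$ with $a,b\in\ball{w}{\sqrt{2}\delta}$. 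The definition of the CQ-number then yields $\scal{x-a}{r-b}\ge-\theta\|x-a\|\,\|r-b\|$. Applying Lemma~\ref{l:const} with $(u,v)=(\mu(r-b),x-a)$ and parameter $\lambda$ gives $\|x-q\|^2\ge\lambda^2(1-\theta^2)d_A^2(x)$, while applying it with $(u,v)=(\lambda(x-a),r-b)$ and parameter $\mu$ (so that $u+\mu v=x-q$ and $u+v=x-b$) gives $\|x-q\|^2\ge\big(\tfrac{2\mu\sqrt{1-\theta^2}}{|1-\mu|+\sqrt{(1-\mu)^2+4\mu(1-\theta^2)}}\big)^2 d_B^2(x)$. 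Taking the larger of the two bounds and invoking $\kappa$-linear regularity on $\ball{w}{\delta/2}$, which gives $\max\{d_A(x),d_B(x)\}\ge d_{A\cap B}(x)/\kappa$, yields $\|x-q\|\ge(\nu/\alpha)d_{A\cap B}(x)$; this is \ref{p:coer-DR_L}, and then \ref{p:coer-DR_gen} and \ref{p:coer-DR_less} follow by applying Lemma~\ref{l:Lcoer} with $U=\ball{w}{\delta/2}$.

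For the affine-subspace addendum, where $\lambda\ne1$ and $\{A,B\}$ is $\kappa$-linearly regular on $\ball{w}{\sqrt{2}\delta/2}$, I would rerun the single-step estimate but use linear regularity at $r$ rather than at $x$. Since $A$ is affine, $P_Ar=P_Ax=a$ and $x-a\in(A-A)^\perp$ by Lemma~\ref{l:projL}\ref{l:projL_perp}, so $d_A(r)=\|r-a\|=|1-\lambda|\,d_A(x)$; choosing $c\in A\cap B$ with $\|r-c\|=d_{A\cap B}(r)$ and using that $r-a\in(A-A)^\perp$ while $a-c\in A-A$, the Pythagorean identity gives $d_{A\cap B}^2(x)\le\|x-c\|^2=d_{A\cap B}^2(r)+(1-(1-\lambda)^2)d_A^2(x)$. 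Bounding $d_{A\cap B}^2(r)\le\kappa^2\max\{(1-\lambda)^2d_A^2(x),\|r-b\|^2\}$ by linear regularity at $r\in\ball{w}{\sqrt{2}\delta/2}$, and combining with $\|x-q\|^2\ge\lambda^2(1-\theta^2)d_A^2(x)$ and $\|x-q\|^2\ge\mu^2(1-\theta^2)\|r-b\|^2$ (both from the first alternative in Lemma~\ref{l:const}) together with a short case split on which of $(1-\lambda)^2d_A^2(x)$ and $\|r-b\|^2$ dominates, yields $\|x-q\|\ge\frac{\sqrt{1-\theta^2}}{\kappa}\min\{\lambda,\mu|1-\lambda|\}\,d_{A\cap B}(x)$. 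The case split uses $\kappa\ge1$; if instead $\kappa<1$, then $\ball{w}{\sqrt{2}\delta/2}\subseteq A\cap B$ so $d_{A\cap B}$ vanishes on $\ball{w}{\delta/2}$ and the assertion is vacuous there, so no loss results.

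I expect the main obstacle to be organizational rather than deep: threading the single inequality $\scal{x-a}{r-b}\ge-\theta\|x-a\|\,\|r-b\|$ through both applications of Lemma~\ref{l:const} with the right parameters, and the ball bookkeeping that lets the CQ-number and linear regularity be invoked at $a$, $b$, $x$, $r$. The one genuinely useful piece of algebra is $\lambda(x-a)+(r-b)=x-b$, which is what makes the general-case $d_B$-estimate come out with $\|x-b\|$ instead of $\|r-b\|$; and the addendum rests on the elementary inequality $\min\{\lambda^2,\mu^2(1-\lambda)^2\}\big(\max\{(1-\lambda)^2s^2,t^2\}+(1-(1-\lambda)^2)s^2\big)\le\max\{\lambda^2s^2,\mu^2t^2\}$ valid for all $s,t\ge0$.
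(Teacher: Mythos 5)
Your proposal is correct and, for items \ref{p:coer-DR_L}--\ref{p:coer-DR_less}, follows essentially the same route as the paper: reduction via Lemma~\ref{l:averaged-coer}, the same ball bookkeeping through Fact~\ref{f:ImBall}, the same two applications of Lemma~\ref{l:const} (your pairs $(\mu(r-b),x-a)$ and $(\lambda(x-a),r-b)$ coincide with the paper's $(r-s,x-a)$ and $(x-r,r-b)$), linear regularity at $x$, and Lemma~\ref{l:Lcoer} for \ref{p:coer-DR_gen} and \ref{p:coer-DR_less}. For the addendum your argument is a valid minor variant: the paper stays with $d_{A\cap B}(r)$, applies $\kappa$-linear regularity at $r$, and only at the end converts via Lemma~\ref{l:aff}\ref{l:aff_dist} using $d_{A\cap B}^2(r)\geq(1-\lambda)^2 d_{A\cap B}^2(x)$, which works for every $\kappa\in\RPP$ and avoids your elementary inequality and the $\kappa\geq 1$ versus $\kappa<1$ case split (the latter being correctly dismissed as vacuous, since $\kappa<1$ forces $\ball{w}{\sqrt{2}\delta/2}\subseteq A\cap B$).
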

\begin{proof} 
First, it follows from the definition of the CQ-number $\theta$ that 
\begin{equation}
\label{e:strreg}
\left.\begin{aligned}
&a \in A\cap \ball{w}{\sqrt{2}\delta},\ b \in B\cap \ball{w}{\sqrt{2}\delta}, \\
&u \in \pnX{A}(a)\cap (L-a),\ v \in \pnX{B}(b)\cap (L-b)
\end{aligned}\right\}
\implies
\scal{u}{v}\geq -\theta\|u\|\|v\|.
\end{equation}
To prove \ref{p:coer-DR_L}, in view of Lemma~\ref{l:averaged-coer}, it suffices to prove that $P_B^\mu P_A^\lambda$ is $(A\cap B, \nu/\alpha)$-quasi coercive on $\ball{w}{\delta/2}\cap L$.  
Let $x\in \ball{w}{\delta/2}\cap L$ and $s\in P_B^\mu P_A^\lambda x$. Then there exist $a \in P_Ax$, $r\in P_A^\lambda x$, and $b \in P_Br$ such that 
\begin{equation}
x -r =\lambda(x -a) \quad\text{and}\quad r -s =\mu(r -b).
\end{equation}
We note that $r\in P_A^\lambda(L)\subseteq L$ due to Lemma~\ref{l:aff}\ref{l:aff_imag}. 
By Fact~\ref{f:ImBall}\ref{f:ImBall_P}, $a \in P_Ax \subseteq A\cap \ball{w}{\delta}$. Since $\varepsilon \in \left[0, 1/3\right]$, Fact~\ref{f:ImBall}\ref{f:ImBall_reg} yields $r \in \ball{w}{\sqrt{2}\delta/2}$.
Again by Fact~\ref{f:ImBall}\ref{f:ImBall_P}, $b \in P_Br \subseteq B\cap \ball{w}{\sqrt{2}\delta}$.
Now as $x -r =\lambda(x -a) \in \pnX{A}(a)\cap(L-a)$ and $r -s =\mu(r -b) \in \pnX{B}(b)\cap(L-b)$, it follows from \eqref{e:strreg} that
\begin{equation}
\scal{x -a}{r -s} \geq -\theta\|x -a\|\|r -s\|
\quad\text{and}\quad
\scal{x -r}{r -b} \geq -\theta\|x -r\|\|r -b\|.
\end{equation}
Applying Lemma~\ref{l:const} with $(u, v)= (r-s, x-a)$, we obtain
\begin{subequations}\label{e:160802a}
\begin{align}
\|x-s\|^2&= \|(x-r)+ (r-s)\|^2= \|\lambda(x-a)+ (r-s)\|^2 \\
&\geq \lambda^2(1-\theta^2)\|x-a\|^2
=\lambda^2(1-\theta^2)d_A^2(x);
\end{align}
\end{subequations}
and with $(u, v)= (x-r, r-b)$, we obtain
\begin{subequations}\label{e:160802b}
\begin{align}
\|x -s\|^2= \|(x -r) +\mu(r -b)\|^2 
&\geq \max\{\mu^2(1-\theta^2)\|r-b\|^2, \xi\|x -b\|^2\}\\
&\geq \max\{\mu^2(1-\theta^2)d_B^2(r), \xi d_B^2(x)\},
\end{align}
\end{subequations}
where $\xi:=\tfrac{4\mu^2(1-\theta^2)}{\big(|1-\mu| +\sqrt{(1-\mu)^2 +4\mu(1-\theta^2)}\big)^2}$. 
Combining with the $\kappa$-linear regularity of $\{A, B\}$ yields
\begin{subequations}
\begin{align}
\|x -s\| &\geq 
\min\Big\{\lambda\sqrt{1-\theta^2}, \sqrt{\xi}\Big\}
\max\{d_A(x), d_B(x)\}\\
&\geq
\frac{\sqrt{1-\theta^2}}{\kappa}
\min\bigg\{\lambda,
\frac{2\mu}{|1-\mu| +\sqrt{(1-\mu)^2 +4\mu(1-\theta^2)}} \bigg\}
d_{A\cap B}(x) =\frac{\nu}{\alpha} d_{A\cap B}(x),
\end{align}
\end{subequations}
and \ref{p:coer-DR_L} is proved.
Now applying Lemma~\ref{l:Lcoer} and noting from Lemma~\ref{l:projL}\ref{l:projL_ball} that $\ball{w}{\delta/2}\subseteq P_L^{-1}(\ball{w}{\delta/2}\cap L)$, we get \ref{p:coer-DR_gen} and \ref{p:coer-DR_less}.

In addition, suppose that $\lambda\neq 1$, that $A$ is an affine subspace of $X$, and that $\{A,B\}$ is $\kappa$-linearly regular on $\ball{w}{\sqrt{2}\delta/2}$. Then $x-a= \frac{1}{1-\lambda}(r-a)$ and, by \eqref{e:160802a},
\begin{equation}
\|x-s\|^2\geq \lambda^2(1-\theta^2)\|x-a\|^2
= \frac{\lambda^2(1-\theta^2)}{(1-\lambda)^2}\|r-a\|^2
\geq \frac{\lambda^2(1-\theta^2)}{(1-\lambda)^2} d_A^2(r).
\end{equation}
Together with \eqref{e:160802b} and the $\kappa$-linear regularity of $\{A, B\}$ on $\ball{w}{\sqrt{2}\delta/2}$, we get
\begin{subequations}
\label{e:dArdBr}
\begin{align}
\|x -s\|&\geq \sqrt{1-\theta^2}\min\Big\{\frac{\lambda}{|1-\lambda|}, \mu\Big\} \max\{d_A(r), d_B(r)\} \\
&\geq \frac{\sqrt{1-\theta^2}}{\kappa}\min\Big\{\frac{\lambda}{|1-\lambda|}, \mu\Big\} d_{A\cap B}(r).
\end{align}
\end{subequations}
By applying Lemma~\ref{l:aff}\ref{l:aff_dist} (with $C=A\cap B$ and $L=A$), 
\begin{subequations}
\begin{align}
d_{A\cap B}^2(r)&=d_{A\cap B}^2(P_A x)+(1-\lambda)^2 d_A^2(x)\\
&\geq (1-\lambda)^2\big(d_{A\cap B}^2(P_Ax)+d_A^2(x)\big)
=(1-\lambda)^2 d_{A\cap B}^2(x),
\end{align}
\end{subequations}
and the conclusion follows.
\end{proof}

\begin{corollary}
\label{c:coer-DR}
Let $A$ and $B$ be closed subsets of $X$ such that $A\cap B\neq \varnothing$. Let also $w \in A\cap B$, $\lambda,\mu\in \left]0, 2\right]$, and $\alpha \in \RPP$. Suppose that $A$ is superregular at $w$ and $\{A, B\}$ is affine-hull regular at $w$. Define $L :=\aff(A\cup B)$. 
Then the following hold:
\begin{enumerate}
\item\label{c:coer-DR-L}
$T_{\lambda, \mu}^\alpha$ is $(A\cap B)$-quasi coercive on $\ball{w}{\delta}\cap L$ for some $\delta\in \RPP$.
\item\label{c:coer-DR_gen}
$T_{\lambda, \mu}^\alpha$ is $\big((A\cap B)+ (L-L)^\perp\big)$-quasi coercive around $w$.
\item\label{c:coer-DR_less}
If $\min\{\lambda,\mu\}<2$, then $T_{\lambda, \mu}^\alpha$ is $(A\cap B)$-quasi coercive around $w$.
\end{enumerate}
\end{corollary}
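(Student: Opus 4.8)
The strategy is to reduce everything to a single application of Proposition~\ref{p:coer-DR}, after extracting from the hypotheses a compatible choice of the parameters $\varepsilon$, $\delta$, $\kappa$ appearing there. First I would unpack the superregularity of $A$ at $w$: by definition, for the particular value $\varepsilon := 1/3$ (which lies in $\left[0,1/3\right]$ as required) there exists $\delta_1\in\RPP$ such that $A$ is $(\varepsilon,\delta_1)$-regular at $w$; in fact superregularity gives this for every positive $\varepsilon$, but $\varepsilon = 1/3$ is all Proposition~\ref{p:coer-DR} needs. Next I would use the affine-hull regularity of $\{A,B\}$ at $w$, i.e.\ $L$-regularity with $L = \aff(A\cup B)$. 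By Proposition~\ref{p:str&CQ} (equivalence of \ref{p:str&CQ_str} and \ref{p:str&CQ_lim}), this yields $\overline\theta_{A,B,L}(w) < 1$, hence there is $\delta_2\in\RPP$ with $\theta_{A,B,L}(w,\sqrt2\,\delta_2) < 1$. Separately, applying Proposition~\ref{p:Lstr}\ref{p:Lstr_lin} to the $L$-regular system $\{A,B\}$ (noting $L\supseteq A\cup B$) shows $\{A,B\}$ is linearly regular around $w$, so there exist $\delta_3,\kappa\in\RPP$ with $\{A,B\}$ being $\kappa$-linearly regular on $\ball{w}{\delta_3}$.

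Now set $\delta := \min\{\delta_1, \delta_2, 2\delta_3\}$, so that $\delta/2 \le \delta_3$, $\delta \le \delta_1$, and $\sqrt2\,\delta \le \sqrt2\,\delta_2$ — wait, the last inequality needs care: we want $\theta_{A,B,L}(w,\sqrt2\,\delta) < 1$, and since the CQ-number is nondecreasing in its second argument, $\sqrt2\,\delta \le \sqrt2\,\delta_2$ gives $\theta_{A,B,L}(w,\sqrt2\,\delta) \le \theta_{A,B,L}(w,\sqrt2\,\delta_2) < 1$. Thus with this $\delta$, all three hypotheses of Proposition~\ref{p:coer-DR} are met: $A$ is $(\varepsilon,\delta)$-regular at $w$ (monotonicity of $(\varepsilon,\delta)$-regularity in $\delta$), $\{A,B\}$ is $\kappa$-linearly regular on $\ball{w}{\delta/2}$ (since $\ball{w}{\delta/2}\subseteq\ball{w}{\delta_3}$), and $\theta_{A,B,L}(w,\sqrt2\,\delta) < 1$. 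Proposition~\ref{p:coer-DR} then produces the constant $\nu\in\RPP$ — note $\nu>0$ precisely because $1-\theta^2>0$, $\alpha>0$, $\kappa>0$, and the $\min$ is of two strictly positive numbers — and parts \ref{p:coer-DR_L}, \ref{p:coer-DR_gen}, \ref{p:coer-DR_less} of that proposition give exactly \ref{c:coer-DR-L}, \ref{c:coer-DR_gen}, \ref{c:coer-DR_less} here, where in \ref{c:coer-DR_gen} and \ref{c:coer-DR_less} "around $w$" is witnessed by the radius $\delta/2$.

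There is essentially no real obstacle; the only things requiring a word of justification are the monotonicity properties used implicitly — that $(\varepsilon,\delta)$-regularity at $w$ persists when $\delta$ is decreased (immediate from the definition, since the quantifier ranges over a smaller ball) and that the CQ-number $\theta_{A,B,L}(w,\cdot)$ is nondecreasing (immediate from its definition as a supremum over an enlarging constraint set). I would also remark that the equivalence invoked via Proposition~\ref{p:str&CQ} requires $L\supseteq A\cup B$, which holds by construction of $L = \aff(A\cup B)$, and that affine-hull regularity is by definition $L$-regularity for exactly this $L$, so the hypothesis feeds directly into both Proposition~\ref{p:Lstr} and Proposition~\ref{p:str&CQ}. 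Assembling these observations, the proof is a short paragraph.
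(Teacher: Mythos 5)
Your proposal is correct and follows essentially the same route as the paper: superregularity gives $(\varepsilon,\delta)$-regularity of $A$ with $\varepsilon\leq 1/3$, Proposition~\ref{p:Lstr}\ref{p:Lstr_lin} gives local linear regularity, Proposition~\ref{p:str&CQ} gives a CQ-number below $1$, and a common radius makes Proposition~\ref{p:coer-DR} applicable, yielding all three parts. The paper merely compresses the bookkeeping (the choice of a common $\delta$ and the monotonicity of $(\varepsilon,\delta)$-regularity and of the CQ-number in $\delta$) that you spell out explicitly.
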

\begin{proof}
By superregularity, affine-hull regularity, Proposition~\ref{p:Lstr}\ref{p:Lstr_lin}, and Proposition~\ref{p:str&CQ}, we can find $\varepsilon\in \left[0, 1/3\right]$, $\delta\in \RPP$, and $\kappa\in \RPP$ such that $A$ is $(\varepsilon, \delta)$-regular at $w$, that $\{A, B\}$ is $\kappa$-linearly regular on $\ball{w}{\delta/2}$, and that the CQ-number $\theta_{A,B,L}(w, \sqrt{2}\delta)< 1$. The conclusion then follows from Proposition~\ref{p:coer-DR}.
\end{proof}

\begin{remark}
In Proposition~\ref{p:coer-DR} and Corollary~\ref{c:coer-DR}, the term $(L -L)^\perp$ is indeed indispensable when $\lambda =\mu =2$. For instance, suppose $A$ and $B$ are two arbitrary closed sets in $X =\RR^3$ with $w\in A\cap B$ and $L =\aff(A\cup B) =\RR^2\times \{0\}$. Then $(L-L)^\perp =\menge{(0,0,\xi)}{\xi\in \RR}$. For any $\nu,\delta\in \RPP$, taking $x =w +(0,0,\delta)$, we have $x_+ =x =T_{2,2}^\alpha x$ due to Lemma~\ref{l:Fix}\ref{l:Fix_projection}, and $d_{A\cap B}(x) =d_L(x) =\delta$. Hence, $0 =\|x_+-x\|< \nu d_{A\cap B}(x) =\nu\delta$, i.e., $T_{2,2}^\alpha$ fails to be $(A\cap B,\nu)$-quasi coercive on $\ball{w}{\delta}$.
\end{remark}

\begin{remark}[improved quasi coercivity constant for DR operator]\label{r:imprv_qcoer}
As we will see in the next part (see Remark~\ref{r:better_rate}), larger quasi coercivity constant will improve the linear convergence rate.
In this aspect, Proposition~\ref{p:coer-DR} indeed subsumes \cite[Lemma~4.2]{Pha14} and \cite[Lemma~3.14]{HL13}; and moreover, provides \emph{larger (local) quasi coercivity constant} for the classical DR operator ($\lambda=\mu=2$ and $\alpha=1/2$). To see this, we consider the DR operator $T:=T_{2,2}^{1/2}$ for the pair $(A,B)$ of closed subsets of $X$ and $w\in A\cap B$. Assume that $A$ is superregular at $w$ and that $\{A,B\}$ is strongly regular at $w$, i.e., the limiting CQ-number $\overline{\theta}_{A,B}(w)<1$ (due to Proposition~\ref{p:str&CQ}).
Then for any $\theta\in\big]\overline{\theta},1\big[$, there exist $\varepsilon\in\left[0,1/4\right[$, $\delta\in\RPP$, and $\kappa\in\RP$ such that
\begin{enumerate}
\item $A$ is $(\varepsilon,2\delta)$-regular at $w$;
\item $\theta$ is the CQ-number at $w$ associated with $(A,B)$ and $3\delta$, which implies that
\begin{equation}
\left\{\begin{aligned}
&a\in A\cap\ball{w}{3\delta},b\in B\cap\ball{w}{3\delta},\\
&u\in \pnX{A}(a),v\in\pnX{B}(b)
\end{aligned}\right.
\implies
\scal{u}{v}\geq -\theta\|u\|.\|v\|;
\end{equation}
\item $\{A,B\}$ is $\kappa$-linearly regular on $\ball{w}{2\delta}$.
\end{enumerate}
Now, on the one hand, \cite[Lemma~4.2]{Pha14} derives that $T$ is $(A\cap B,\hat{\nu})$-quasi coercive on $\ball{w}{\delta}$ with $\hat{\nu}=\frac{\sqrt{1-\theta}}{\kappa\sqrt{5}}$.
On the other hand, Proposition~\ref{p:coer-DR} derives that $T$ is $(A\cap B,\nu)$-quasi coercive on $\ball{w}{\delta}$ with $\nu=\frac{\sqrt{1-\theta^2}}{\kappa}\frac{2}{1+\sqrt{1+8(1-\theta^2)}}$. It is clear that $\nu\geq\frac{\sqrt{1-\theta^2}}{\kappa}\frac{2}{1+\sqrt{9}}>
\frac{\sqrt{1-\theta}}{\kappa}\frac{1}{\sqrt{5}}=\hat{\nu}$ regardless of $\theta\in\left]0,1\right[$.

In addition, if $A$ is an affine subspace, then Proposition~\ref{p:coer-DR} also improves the estimate in \cite[Lemma~3.14]{HL13}. Indeed, under the assumptions made, on the one hand, \cite[Lemma~3.14]{HL13} implies that $T$ is $(A\cap B,\hat{\nu})$-quasi coercive on $\ball{w}{\delta}$ with $\hat{\nu}=\frac{\sqrt{1-\theta}}{\kappa}$. On the other hand, Proposition~\ref{p:coer-DR} yields that $T$ is $(A\cap B,\nu)$-quasi coercive on $\ball{w}{\delta}$ with $\nu=\frac{\sqrt{1-\theta^2}}{\kappa}>\hat{\nu}$.

Similarly, we can show that Proposition~\ref{p:coer-DR} also improves quasi coercivity constant derived in \cite[Proposition~3.8]{DP16} for gDR operators.
\end{remark}

The following global version of Proposition~\ref{p:coer-DR} is an extension of \cite[Lemma~4.3]{BNP15}.
\begin{proposition}[global quasi coercivity of convex gDR operators]
\label{p:gcoer-DR}
Let $\lambda,\mu\in \left]0,2\right]$ and $\alpha\in \RPP$. 
Then the following hold:
\begin{enumerate}
\item\label{p:gcoer-DR_reli}
If $A$ and $B$ are closed convex subsets of $X$ such that $\reli A\cap \reli B\neq \varnothing$, then $T_{\lambda, \mu}^\alpha$ is $\big((A\cap B)+ (L-L)^\perp\big)$-quasi coercive on every bounded set $S$ of $X$ with $L :=\aff(A\cup B)$, and if additionally $\min\{\lambda,\mu\}<2$, then $T_{\lambda, \mu}^\alpha$ is $(A\cap B)$-quasi coercive on every bounded set $S$ of $X$.
\item\label{p:gcoer-DR_poly} 
If $A$ and $B$ are polyhedral subsets of $X$ such that $A\cap B\neq \varnothing$, then $T_{\lambda, \mu}^\alpha$ is $\Fix T_{\lambda, \mu}^\alpha$-quasi coercive on every bounded set $S$ of $X$. 
\end{enumerate}
\end{proposition}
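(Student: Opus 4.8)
The plan is to derive both parts of Proposition~\ref{p:gcoer-DR} from the local quasi coercivity results of Section~\ref{s:qcoer} together with the globalization lemma, Lemma~\ref{l:global-coer}, but the two parts require genuinely different inputs. For Part~\ref{p:gcoer-DR_reli}, first note that, being convex, $A$ is $(0,\delta)$-regular at each of its points for every $\delta\in\RPP$ — for convex $C$ one has $\pnX{C}(x)\subseteq N_C(x)$, so $\scal{u}{x-y}\geq 0$ whenever $u\in\pnX{C}(x)$ and $x,y\in C$ — hence $A$ is superregular at $w$. The key structural step is that $\reli A\cap\reli B\neq\varnothing$ upgrades to affine-hull regularity of $\{A,B\}$ at every $w\in A\cap B$: with $L:=\aff(A\cup B)$, take $u\in N_A(w)\cap(-N_B(w))\cap(L-w)$ and $z\in\reli A\cap\reli B$; then $\scal{u}{z-w}\leq 0$ (from $z\in A$) and $\scal{u}{z-w}\geq 0$ (from $z\in B$), so the affine functional $x\mapsto\scal{u}{x-w}$, being $\leq 0$ on $A$ and vanishing at the relative interior point $z$, vanishes on $\aff A$; symmetrically it vanishes on $\aff B$, hence on $L$; since $u\in L-w$, this forces $u=0$. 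Now Corollary~\ref{c:coer-DR}\ref{c:coer-DR-L} applies: for each $w\in A\cap B$ there is $\delta_w\in\RPP$ with $T_{\lambda,\mu}^\alpha$ being $(A\cap B)$-quasi coercive on $\ball{w}{\delta_w}\cap L$.

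To globalize along $L$, I would invoke Lemma~\ref{l:global-coer} with $C=A\cap B$ and the closed set $L$: $T_{\lambda,\mu}^\alpha$ is continuous and single-valued by Lemma~\ref{l:cont-avg}\ref{l:cont-avg_cont}, and $\Fix T_{\lambda,\mu}^\alpha\cap L=A\cap B$ by Lemma~\ref{l:cvxFix}\ref{l:cvxFix_reli}, so $T_{\lambda,\mu}^\alpha$ is $(A\cap B)$-quasi coercive on $S'\cap L$ for every bounded $S'$. To move off $L$, given a bounded $S$ pick $R$ with $S\subseteq\ball{0}{R}$; since $\|P_Lx-w\|\leq\|x-w\|$ for every $w\in L$ (Lemma~\ref{l:projL}\ref{l:projL_ball}), the set $P_L(\ball{0}{R})$ lies in $\ball{0}{R'}\cap L$ for some $R'$, whence $\ball{0}{R}\subseteq P_L^{-1}(\ball{0}{R'}\cap L)$. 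Applying Lemma~\ref{l:Lcoer} with $U=\ball{0}{R'}$ to the $(A\cap B,\nu_0)$-quasi coercivity on $\ball{0}{R'}\cap L$ already obtained yields $\big((A\cap B)+(L-L)^\perp\big)$-quasi coercivity on $P_L^{-1}(\ball{0}{R'}\cap L)\supseteq S$, together with $(A\cap B,\nu_0')$-quasi coercivity there when $\min\{\lambda,\mu\}<2$; this settles Part~\ref{p:gcoer-DR_reli}.

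For Part~\ref{p:gcoer-DR_poly}, $\reli A\cap\reli B$ may be empty (e.g.\ two closed halfspaces with opposite outer normals), so affine-hull regularity — and with it the argument above — can fail; the replacement is polyhedrality, which makes everything piecewise linear. The projector onto a polyhedron is piecewise linear (continuous, and $X$ decomposes into finitely many polyhedral cells on each of which the projector coincides with an affine projection onto a face), hence so are $P_A^\lambda$, $P_B^\mu$, their composition, and $g:=\Id-T_{\lambda,\mu}^\alpha$; in particular $\gra g$ is a finite union of polyhedra. By Lemma~\ref{l:cvxFix}\ref{l:cvxFix_inconsistent}, with $A\cap B\neq\varnothing$ so that the gap vanishes and $E=A\cap B$, the set $\Fix T_{\lambda,\mu}^\alpha=g^{-1}(0)$ is a nonempty polyhedron ($A\cap B$ if $\min\{\lambda,\mu\}<2$, and $A\cap B+N_{A-B}(0)$ if $\lambda=\mu=2$). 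For a bounded $S$ I would then run a finite error bound: on each of the finitely many cells $Q_j$ on which $g$ agrees with an affine map $g_j$, either $\{x\in Q_j:g_j(x)=0\}\neq\varnothing$, and Hoffman's error bound for that polyhedron gives $d_{g^{-1}(0)}(x)\leq\kappa_j\|g(x)\|$ for $x\in Q_j$, or $\{x\in Q_j:g_j(x)=0\}=\varnothing$, and then $\|g(x)\|=\|g_j(x)\|$ is bounded below on $Q_j$ by some $c_j>0$ while $d_{g^{-1}(0)}(\cdot)$ is bounded on $S$, yielding the same type of inequality on $Q_j\cap S$; the minimum of the finitely many resulting constants is a quasi coercivity modulus for $T_{\lambda,\mu}^\alpha$ on $S$. (Equivalently, one may cite that $g^{-1}$ is a polyhedral multifunction, hence upper Lipschitzian, and read off the error bound.)

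For Part~\ref{p:gcoer-DR_reli} the only non-routine step is the short separation argument turning $\reli A\cap\reli B\neq\varnothing$ into affine-hull regularity; the rest is plumbing of Corollary~\ref{c:coer-DR}, Lemma~\ref{l:cvxFix}, Lemma~\ref{l:global-coer}, and Lemma~\ref{l:Lcoer}. The genuine obstacle is in Part~\ref{p:gcoer-DR_poly}: one must \emph{not} attempt the local-regularity route (it provably fails for polyhedra such as the two-halfspace example), and must instead recognize that polyhedrality reduces the claim to a bounded-set error bound for the piecewise linear map $\Id-T_{\lambda,\mu}^\alpha$; the delicate points are identifying $\Fix T_{\lambda,\mu}^\alpha$ with $g^{-1}(0)$ through the fixed-point description in Lemma~\ref{l:cvxFix}\ref{l:cvxFix_inconsistent} and verifying that this set is polyhedral.
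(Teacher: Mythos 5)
Your proposal is correct, and it splits naturally: part \ref{p:gcoer-DR_reli} follows the paper's route, while part \ref{p:gcoer-DR_poly} replaces the paper's key citation by a direct argument. For \ref{p:gcoer-DR_reli} the paper does exactly what you do --- Lemma~\ref{l:cvxFix}\ref{l:cvxFix_reli} to get $\Fix T_{\lambda,\mu}^\alpha\cap L=A\cap B$, Corollary~\ref{c:coer-DR} for local coercivity on balls intersected with $L$, Lemma~\ref{l:global-coer} to globalize along $L$, and Lemma~\ref{l:Lcoer} to move off $L$ --- except that where you give short self-contained arguments (convexity $\Rightarrow$ $(0,\delta)$-regularity, and the separation argument showing $\reli A\cap\reli B\neq\varnothing$ implies affine-hull regularity of $\{A,B\}$ at every $w\in A\cap B$), the paper simply cites \cite[Remark~8.2(v), Example~7.2, Proposition~7.5]{BLPW13a}; your two arguments are correct and make that step citation-free. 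For \ref{p:gcoer-DR_poly} both proofs start from the same observation that $Q:=\Id-T_{\lambda,\mu}^\alpha$ is piecewise linear with polyhedral graph and $Q^{-1}(0)=\Fix T_{\lambda,\mu}^\alpha\neq\varnothing$, but then diverge: the paper invokes Robinson's error bound for polyhedral multifunctions \cite{Rob81} to get $\|x-T_{\lambda,\mu}^\alpha x\|\geq\nu\, d_{\Fix T_{\lambda,\mu}^\alpha}(x)$ whenever the residual is small, localizes around each fixed point, and globalizes with Lemma~\ref{l:global-coer}; you instead prove the bounded-set error bound directly, cell by cell, via Hoffman's error bound on the cells where the affine piece has a zero and via closedness of affine images of polyhedra (so $\inf_{Q_j}\|g_j\|>0$) plus boundedness of $d_{\Fix T_{\lambda,\mu}^\alpha}$ on $S$ on the cells where it does not. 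Your route avoids both Robinson's theorem and Lemma~\ref{l:global-coer} for this part, at the cost of carrying the cell decomposition explicitly; the paper's route is shorter given the reference but less self-contained. Two small points you should make explicit if you write this up: in the empty-zero-set cells the lower bound $c_j>0$ really uses that $g_j(Q_j)$ is a polyhedron (hence closed), and the degenerate case $S\subseteq\Fix T_{\lambda,\mu}^\alpha$ (where $\sup_S d_{\Fix T_{\lambda,\mu}^\alpha}=0$) should be disposed of trivially; neither affects correctness.
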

\begin{proof}
First, by Lemma~\ref{l:cont-avg}\ref{l:cont-avg_cont}, $T_{\lambda,\mu}^\alpha$ is continuous and single-valued. 

\ref{p:gcoer-DR_reli}: Set $L:= \aff(A\cup B)$. Since $A$ and $B$ are convex with $\reli A\cap \reli B\neq\varnothing$, Lemma~\ref{l:cvxFix}\ref{l:cvxFix_reli} yields $\Fix T_{\lambda,\mu}^\alpha\cap L= A\cap B$. 
Now for every $w\in A\cap B$, we derive from \cite[Remark~8.2(v)]{BLPW13a} that $A$ is superregular at $w$, from \cite[Example~7.2(i)--(ii) and Proposition~7.5]{BLPW13a} that $\{A, B\}$ is affine-hull regular at $w$, and then from Corollary~\ref{c:coer-DR} that $T_{\lambda, \mu}^\alpha$ is $(A\cap B)$-quasi coercive on $\ball{w}{\delta}\cap L$ for some $\delta\in \RPP$. 
In turn, Lemma~\ref{l:global-coer} implies that $T_{\lambda, \mu}^\alpha$ is $(A\cap B)$-quasi coercive on $S\cap L$ for every bounded set $S$ of $X$.
Finally, apply Lemma~\ref{l:Lcoer} and note from Lemma~\ref{l:projL}\ref{l:projL_dist} that $\ball{0}{\delta}\subseteq P_L^{-1}(\ball{0}{\delta}\cap L)$. 

\ref{p:gcoer-DR_poly}: By \cite[Example~12.31(a)\&(d)]{RW98}, $P_A$ and $P_B$ are piecewise linear in the sense of \cite[Definition~2.47(a)]{RW98}. Notice that compositions of piecewise linear operators are also piecewise linear (see~\cite[Corollary~2.3]{Son82}), and that linear combinations of piecewise linear operators are also piecewise linear. Thus, $Q:= \Id-T_{\lambda, \mu}^\alpha=\alpha(\Id-P_B^\mu P_A^\lambda)$ is piecewise linear. By \cite[Example~2.48]{RW98}, $Q$ is also polyhedral (i.e., the graph of $Q$ is a union of finitely many polyhedral sets). Noting that $Q^{-1}(0)= \Fix T_{\lambda, \mu}^\alpha$ and $d_{Qx}(0)= \|Qx\|= \|x- T_{\lambda, \mu}^\alpha x\|$, and using \cite[Corollary of Proposition~1]{Rob81}, there exists $\nu\in \RPP$ and $\varepsilon\in \RPP$ such that 
\begin{equation} 
\|x- T_{\lambda, \mu}^\alpha x\|\geq \nu d_{\Fix T_{\lambda, \mu}^\alpha}(x)
\quad\text{whenever~} \|x- T_{\lambda, \mu}^\alpha x\|< \varepsilon.
\end{equation}
Now for every $w\in \Fix T_{\lambda, \mu}^\alpha$, since $T_{\lambda, \mu}^\alpha$ is continuous, there exists $\delta\in \RPP$ such that $\|x- T_{\lambda, \mu}^\alpha x\|< \varepsilon$ for all $x\in \ball{w}{\delta}$. It follows that $T_{\lambda, \mu}^\alpha$ is $(\Fix T_{\lambda, \mu}^\alpha, \nu)$-quasi coercive on $\ball{w}{\delta}$. Applying Lemma~\ref{l:global-coer} with $C= \Fix T_{\lambda, \mu}^\alpha$ and $L= X$ completes the proof.
\end{proof}

\subsection{In the presence of linear regularity}

Corollary~\ref{c:coer-DR} shows that superregularity and affine-hull regularity assumption is sufficient for quasi coercivity of gDR operators. We will show in Proposition~\ref{p:coer-DR-lin} that if $\min\{\lambda,\mu\}<2$, then affine-hull regularity can be replaced by linear regularity, a \emph{milder} assumption (see Remark~\ref{r:lin-aff}). This is a \emph{new} result that obtains quasi coercivity for gDR operators via linear regularity and operator parameters.

For $x,y,z\in X$, we denote $\widehat{xyz}$ the angle between two vectors $x-y$ and $z-y$, i.e., 
\begin{equation}
\widehat{xyz}\in \left[0, \pi\right]
\quad\text{such that}\quad
\cos\widehat{xyz}= \frac{\scal{x-y}{z-y}}{\|x-y\|\cdot\|z-y\|},
\end{equation}
with the convention that $\widehat{xyz}= 0$ if $x= y$ or $z= y$.
The following two lemmas are crucial for our analysis.

\begin{lemma}
\label{l:angle}
Let $(\varepsilon_n)_\nnn$ be a sequence in $\RP$ convergent to $0$. Let $(x_n)_\nnn$, $(r_n)_\nnn$, $(s_n)_\nnn$ and $(p_n)_\nnn$ be sequences in $X$ such that, for all $n\in \NN$, $r_n\notin \{x_n, s_n, p_n\}$, and that
\begin{equation}
\|x_n-s_n\|\leq \varepsilon_n(\|x_n-r_n\|+ \|s_n-r_n\|).
\end{equation}
Then $\widehat{x_nr_ns_n}\to 0$ and $\cos\widehat{x_nr_np_n}- \cos\widehat{s_nr_np_n}\to 0$ as $n\to +\infty$.
\end{lemma}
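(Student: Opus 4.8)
The plan is to pass to normalized direction vectors and reduce both conclusions to the single assertion that the directions of $x_n-r_n$ and $s_n-r_n$ coincide in the limit. I would set $\alpha_n:=\|x_n-r_n\|$ and $\beta_n:=\|s_n-r_n\|$, which are strictly positive because $r_n\notin\{x_n,s_n\}$ (and likewise $\|p_n-r_n\|>0$), and introduce the unit vectors $a_n:=(x_n-r_n)/\alpha_n$, $b_n:=(s_n-r_n)/\beta_n$, and $c_n:=(p_n-r_n)/\|p_n-r_n\|$. By the defining formula for the angle, $\cos\widehat{x_nr_ns_n}=\scal{a_n}{b_n}$, $\cos\widehat{x_nr_np_n}=\scal{a_n}{c_n}$, and $\cos\widehat{s_nr_np_n}=\scal{b_n}{c_n}$. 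I claim it suffices to prove $\|a_n-b_n\|\to 0$: for unit vectors $\scal{a_n}{b_n}=1-\tfrac12\|a_n-b_n\|^2\to 1$, whence $\widehat{x_nr_ns_n}\to 0$ (the angle lies in $[0,\pi]$ and has cosine tending to $1$); and $|\cos\widehat{x_nr_np_n}-\cos\widehat{s_nr_np_n}|=|\scal{a_n-b_n}{c_n}|\le\|a_n-b_n\|\,\|c_n\|=\|a_n-b_n\|\to 0$.

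To establish $\|a_n-b_n\|\to 0$, I would rewrite the hypothesis as $\|\alpha_n a_n-\beta_n b_n\|=\|x_n-s_n\|\le\varepsilon_n(\alpha_n+\beta_n)$ and normalize by $M_n:=\max\{\alpha_n,\beta_n\}>0$. Since $\alpha_n+\beta_n\le 2M_n$, this gives $\big\|\tfrac{\alpha_n}{M_n}a_n-\tfrac{\beta_n}{M_n}b_n\big\|\le 2\varepsilon_n$. The coefficients $\tfrac{\alpha_n}{M_n},\tfrac{\beta_n}{M_n}$ lie in $\left]0,1\right]$ with at least one of them equal to $1$, and the reverse triangle inequality gives $\big|\tfrac{\alpha_n}{M_n}-\tfrac{\beta_n}{M_n}\big|\le\big\|\tfrac{\alpha_n}{M_n}a_n-\tfrac{\beta_n}{M_n}b_n\big\|\le 2\varepsilon_n$, so in fact both ratios lie in $[1-2\varepsilon_n,1]$ and hence converge to $1$. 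Then
\[
\|a_n-b_n\|\le\Big(1-\tfrac{\alpha_n}{M_n}\Big)\|a_n\|+\Big\|\tfrac{\alpha_n}{M_n}a_n-\tfrac{\beta_n}{M_n}b_n\Big\|+\Big(1-\tfrac{\beta_n}{M_n}\Big)\|b_n\|\le\Big(1-\tfrac{\alpha_n}{M_n}\Big)+2\varepsilon_n+\Big(1-\tfrac{\beta_n}{M_n}\Big),
\]
and all three terms on the right tend to $0$, which finishes the argument.

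The main obstacle is that $\alpha_n$ and $\beta_n$ need not be comparable a priori, so dividing by a fixed one of the two lengths would not by itself produce a usable estimate; normalizing by their maximum and then reading off $|\alpha_n/M_n-\beta_n/M_n|\le 2\varepsilon_n$ from the reverse triangle inequality is exactly what makes the bound uniform, and everything else is routine. I would also note that the degenerate configurations permitted by the hypothesis (such as $x_n=s_n$, or $p_n\in\{x_n,s_n\}$) are automatically consistent with the convention that an angle vanishes when two of its defining points coincide, so the single condition $r_n\notin\{x_n,s_n,p_n\}$ — used only to make the three normalizations legitimate — is all that is needed and no separate case analysis arises.
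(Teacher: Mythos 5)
Your proof is correct. It shares the paper's overall skeleton --- everything is reduced to the normalized direction vectors $a_n=(x_n-r_n)/\|x_n-r_n\|$ and $b_n=(s_n-r_n)/\|s_n-r_n\|$, and the second limit is obtained from $|\cos\widehat{x_nr_np_n}-\cos\widehat{s_nr_np_n}|=|\scal{a_n-b_n}{c_n}|\le\|a_n-b_n\|$ exactly as in the paper --- but the key estimate is reached by a different and reversed route. The paper first bounds the cosine: it expands $2\scal{x_n-r_n}{s_n-r_n}=\|x_n-r_n\|^2+\|s_n-r_n\|^2-\|x_n-s_n\|^2$, inserts the hypothesis, and uses $(\alpha+\beta)^2\le 2(\alpha^2+\beta^2)$ together with $\alpha^2+\beta^2\ge 2\alpha\beta$ to get the explicit bound $\cos\widehat{x_nr_ns_n}\ge 1-2\varepsilon_n^2$, and only then converts this into $\|a_n-b_n\|^2=2(1-\cos\widehat{x_nr_ns_n})\to 0$. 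You instead prove $\|a_n-b_n\|\to 0$ first, with no inner-product computation at all: normalizing by $M_n=\max\{\alpha_n,\beta_n\}$ and using the reverse triangle inequality to force both ratios $\alpha_n/M_n,\beta_n/M_n$ into $[1-2\varepsilon_n,1]$ is a clean way to handle the fact that $\alpha_n$ and $\beta_n$ need not be comparable, and it yields $\|a_n-b_n\|\le 6\varepsilon_n$, from which the cosine limit follows via $\scal{a_n}{b_n}=1-\tfrac12\|a_n-b_n\|^2$. The paper's computation buys a marginally sharper explicit constant ($1-\cos\le 2\varepsilon_n^2$ versus $1-\cos\le 18\varepsilon_n^2$ from your bound), which is immaterial here since only the limit is used; your version is arguably more elementary, relying solely on triangle inequalities. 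Your closing remark is also accurate: the hypothesis $r_n\notin\{x_n,s_n,p_n\}$ is precisely what makes the three normalizations legitimate, and no case analysis for other coincidences is needed.
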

\begin{proof}
By assumption and Cauchy--Schwarz inequality, 
\begin{subequations}
\begin{align}
2\scal{x_n-r_n}{s_n-r_n}&=\|x_n-r_n\|^2+ \|s_n-r_n\|^2- \|x_n-s_n\|^2 \\
&\geq \|x_n-r_n\|^2+ \|s_n-r_n\|^2- \varepsilon_n^2(\|x_n-r_n\|+ \|s_n-r_n\|)^2 \\
&\geq (2-4\varepsilon_n^2)\|x_n-r_n\| \|s_n-r_n\|.
\end{align}
\end{subequations}
It follows that
\begin{equation}
\cos\widehat{x_nr_ns_n}= \frac{\scal{x_n-r_n}{s_n-r_n}}{\|x_n-r_n\| \|s_n-r_n\|}\geq 1-2\varepsilon_n^2\to 1
\quad\text{as~} n\to +\infty, 
\end{equation}
hence $\cos\widehat{x_nr_ns_n}\to 1$ and $\widehat{x_nr_ns_n}\to 0$ as $n\to +\infty$. 
Now we compute
\begin{equation}
\left\| \frac{x_n-r_n}{\|x_n-r_n\|}- \frac{s_n-r_n}{\|s_n-r_n\|} \right\|^2= 2- 2\frac{\scal{x_n-r_n}{s_n-r_n}}{\|x_n-r_n\| \|s_n-r_n\|}= 2(1- \cos\widehat{x_nr_ns_n})\to 0,
\end{equation}
and again by Cauchy--Schwarz inequality,
\begin{subequations}
\begin{align}
|\cos\widehat{x_nr_np_n}- \cos\widehat{s_nr_np_n}|&= \left|\scal{\frac{x_n-r_n}{\|x_n-r_n\|}- \frac{s_n-r_n}{\|s_n-r_n\|}}{\frac{p_n-r_n}{\|p_n-r_n\|}}\right| \\
&\leq \left\| \frac{x_n-r_n}{\|x_n-r_n\|}- \frac{s_n-r_n}{\|s_n-r_n\|} \right\|\cdot 1\to 0, 
\end{align}
\end{subequations}
which completes the proof.
\end{proof}

\begin{lemma}
\label{l:circumcircle}
Let $x, r, s$ and $p$ be in $X$ and set $u:= P_{L_1}p$ and $v:= P_{L_2}p$ with $L_1:= \aff\{x, r\}$ and $L_2:= \aff\{s, r\}$. Suppose that $r\notin \{x, s, u, v\}$. Then $\|u-v\|\leq \|p-r\|\sin\widehat{urv}= \|p-r\|\sin\widehat{xrs}$.
\end{lemma}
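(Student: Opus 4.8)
The plan is to reduce everything to a single two-dimensional picture. Since $u\in L_1=\aff\{x,r\}$ and $v\in L_2=\aff\{s,r\}$, all five relevant points $x,r,s,u,v$ lie in the (at most two-dimensional) affine subspace $M:=\aff\{x,r,s\}$; and because $u=P_{L_1}p$, $v=P_{L_2}p$ with $L_1,L_2\subseteq M$, orthogonal projection onto $L_i$ factors through $P_M$, so we may harmlessly replace $p$ by $P_Mp$ and work entirely inside $M$. After this reduction the first claimed equality, $\sin\widehat{urv}=\sin\widehat{xrs}$, is essentially immediate: $u$ lies on the line $L_1$ through $x$ and $r$, and $v$ lies on the line $L_2$ through $s$ and $r$, so the (undirected) angle at $r$ in the triangle $urv$ equals either $\widehat{xrs}$ or its supplement $\pi-\widehat{xrs}$ depending on which side of $r$ the feet $u,v$ fall, and $\sin$ is invariant under $\theta\mapsto\pi-\theta$. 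The hypothesis $r\notin\{x,s,u,v\}$ is exactly what guarantees all these angles are well defined.

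For the inequality $\|u-v\|\le\|p-r\|\sin\widehat{urv}$, I would use the classical fact it is named after: $u=P_{L_1}p$ means $\scal{p-u}{x-r}=0$, i.e. $p-u\perp L_1-L_1$, so in the triangle $\triangle pru$ the angle at $u$ is a right angle; hence $\|u-r\|=\|p-r\|\cos\widehat{urp}$ and, more to the point, $r,u,v,p$ all lie on the circle with diameter $[p,r]$, since $\widehat{rup}=\widehat{rvp}=\pi/2$ forces $u$ and $v$ onto the Thales circle on $[p,r]$. On that circle the chord $uv$ subtends the inscribed angle $\widehat{urv}$ at $r$, and by the extended law of sines the chord length equals (diameter)$\times\sin$(inscribed angle), i.e. $\|u-v\|=\|p-r\|\sin\widehat{urv}$ — in fact an equality, which is stronger than the stated inequality. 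To avoid invoking circle geometry one can instead argue directly: decompose $p-r=(p-u)+(u-r)$ with $p-u\perp(u-r)$, so $\|u-r\|\le\|p-r\|$ and similarly $\|v-r\|\le\|p-r\|$; writing $\|u-v\|^2=\|u-r\|^2+\|v-r\|^2-2\scal{u-r}{v-r}$ and using $\scal{u-r}{v-r}=\|u-r\|\|v-r\|\cos\widehat{urv}$, a short computation using $\|u-r\|=\|p-r\|\cos\widehat{prv}$-type identities collapses to $\|u-v\|=\|p-r\|\sin\widehat{urv}$.

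The main obstacle is bookkeeping rather than depth: one must handle the orientation issue (feet of the perpendiculars may lie on either side of $r$) so that the angle identity $\sin\widehat{urv}=\sin\widehat{xrs}$ is justified without sign errors, and one must be careful that the degenerate configurations excluded by $r\notin\{x,s,u,v\}$ are genuinely the only ones that cause trouble (e.g. if $u=r$ the angle $\widehat{urv}$ degenerates, but then $\|u-v\|=\|v-r\|$ and one checks the bound separately, or simply notes the hypothesis rules it out). Once the reduction to the plane and the right-angle observations $p-u\perp L_1$, $p-v\perp L_2$ are in place, the rest is a routine exercise in plane trigonometry.
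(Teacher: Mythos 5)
Your reduction-to-the-plane strategy is viable and close in spirit to the paper's argument, but one claim needs correcting: the assertion that the bound is ``in fact an equality'' is false for a general $p\in X$. Equality holds only when $p$ lies in $M:=\aff\{x,r,s\}$; for $p$ outside this plane one has $\|u-v\|=\|P_Mp-r\|\sin\widehat{urv}<\|p-r\|\sin\widehat{urv}$ whenever $\sin\widehat{urv}>0$. Consequently the ``harmless'' replacement of $p$ by $P_Mp$ is not neutral: it leaves $u$, $v$ (hence the left-hand side and the angle) unchanged because $P_{L_i}=P_{L_i}P_M$ (Lemma~\ref{l:aff}\ref{l:aff_comm}), but it strictly decreases the right-hand side, and it is precisely the estimate $\|P_Mp-r\|\le\|p-r\|$ (Lemma~\ref{l:projL}\ref{l:projL_ball}, applied with $r\in M$) that converts your planar equality into the stated inequality. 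You must make this step explicit. Likewise, your alternative ``direct computation'' only collapses to $\|p-r\|\sin\widehat{urv}$ in the planar situation: with $a=\widehat{pru}$, $b=\widehat{prv}$, the identity $\cos^2a+\cos^2b-2\cos a\cos b\cos\widehat{urv}=\sin^2\widehat{urv}$ requires $\widehat{urv}$ to be $a+b$ or $|a-b|$, which fails when $p\notin M$; so that route too must be run after the projection step.

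With that fix, your proof is correct and organized differently from the paper's. The paper never projects $p$: it takes the midpoint $q=\tfrac12(p+r)$, uses the right angles at $u$ and $v$ to get $\|q-r\|=\|q-u\|=\|q-v\|=\tfrac12\|p-r\|$ (a Thales sphere in the ambient space), projects $q$ onto $\aff\{r,u,v\}$ to obtain the circumcenter of the triangle $urv$ with circumradius at most $\tfrac12\|p-r\|$, and concludes by the law of sines $\|u-v\|=2\|\hat q-r\|\sin\widehat{urv}$. Both arguments thus rest on the Thales-circle plus law-of-sines idea; the difference is only where the slack enters --- you project the point $p$ onto the plane of $x,r,s$, the paper projects the center $q$ onto the plane of $r,u,v$. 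Your justification of $\sin\widehat{urv}=\sin\widehat{xrs}$ (the angle at $r$ is $\widehat{xrs}$ or its supplement, since $u-r$ and $v-r$ are parallel to $x-r$ and $s-r$ up to sign) is exactly the paper's.
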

\begin{proof}
First, since $u\in \aff\{x, r\}$ and $v\in \aff\{s, r\}$, we have
\begin{equation}
|\cos\widehat{urv}|= \left| \scal{\frac{u-r}{\|u-r\|}}{\frac{v-r}{\|v-r\|}} \right| 
= \left| \scal{\frac{x-r}{\|x-r\|}}{\frac{s-r}{\|s-r\|}} \right|= |\cos\widehat{xrs}|, 
\end{equation}
which yields $\sin\widehat{urv}= \sin\widehat{xrs}$.

Set $q:= \frac{1}{2}(p+r)$. Then $\|q-r\|= \frac{1}{2}\|p-r\|$. 
Using Lemma~\ref{l:projL}\ref{l:projL_perp}, $\scal{p-u}{r-u}= 0$ and $\scal{p-v}{r-v}= 0$. 
We compute 
\begin{equation}
\|q-u\|^2= \tfrac{1}{4}\|(p-u)+ (r-u)\|^2= \tfrac{1}{4}\|(p-u)- (r-u)\|^2= \tfrac{1}{4}\|p-r\|^2
\end{equation}
and get $\|q-u\|= \frac{1}{2}\|p-r\|$. By the same argument for $\|q-v\|^2$, we obtain
\begin{equation}
\|q-r\|= \|q-u\|= \|q-v\|= \tfrac{1}{2}\|p-r\|.
\end{equation}
Now define $\hat{q}:= P_L q$ with $L:= \aff\{r, u, v\}$. By Lemma~\ref{l:projL}\ref{l:projL_ball},
\begin{equation}
\|q-r\|\geq \|\hat{q}-r\|= \|\hat{q}-u\|= \|\hat{q}-v\|,
\end{equation}
i.e., $\hat{q}$ is the center of the circumcircle passing $r$, $u$, and $v$. Applying the law of sines, we get
\begin{equation}
\|u-v\|= 2\|\hat{q}-r\|\sin\widehat{urv}\leq 2\|q-r\|\sin\widehat{urv}= \|p-r\|\sin\widehat{urv}. 
\end{equation}
The lemma is proved.
\end{proof}

We now ready to prove our \emph{new} result on quasi coercivity of gDR operators under linear regularity assumption.

\begin{proposition}[quasi coercivity of gDR operators under linear regularity]
\label{p:coer-DR-lin}
Let $A$ and $B$ be closed subsets of $X$ such that $A\cap B\neq \varnothing$. Let also $w\in A\cap B$, $\lambda,\mu\in \left]0, 2\right]$, and $\alpha\in \RPP$. Suppose that $\{A,B\}$ is superregular at $w$ and linearly regular around $w$ and that $\min\{\lambda, \mu\}< 2$. Then $T_{\lambda,\mu}^\alpha$ is $(A\cap B)$-quasi coercive around $w$.
\end{proposition}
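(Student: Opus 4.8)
The natural strategy is a proof by contradiction, paralleling the structure of Lemma~\ref{l:global-coer} but adapted to handle the lack of affine-hull regularity. Since $\{A,B\}$ is superregular at $w$, for any $\varepsilon>0$ we can find $\delta>0$ such that both $A$ and $B$ are $(\varepsilon,\delta)$-regular at $w$; since the system is linearly regular around $w$, after shrinking $\delta$ there is $\kappa\in\RPP$ with $d_{A\cap B}(x)\leq \kappa\max\{d_A(x),d_B(x)\}$ on $\ball{w}{\delta}$. Suppose, for contradiction, that $T_{\lambda,\mu}^\alpha$ is \emph{not} $(A\cap B)$-quasi coercive around $w$. Then there are sequences $\varepsilon_n\downarrow 0$ and $x_n\to w$ (since we may work on $\ball{w}{\delta_n}$ with $\delta_n\downarrow0$, or extract a convergent subsequence) with $\|x_n-T_{\lambda,\mu}^\alpha x_n\|<\varepsilon_n d_{A\cap B}(x_n)$, equivalently $\alpha\|x_n-P_B^\mu P_A^\lambda x_n\|<\varepsilon_n d_{A\cap B}(x_n)$. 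Write $a_n\in P_A x_n$, $r_n=P_A^\lambda x_n=(1-\lambda)x_n+\lambda a_n$, $b_n\in P_B r_n$, $s_n=P_B^\mu r_n=(1-\mu)r_n+\mu b_n$, so $x_n-r_n=\lambda(x_n-a_n)\in\pnX{A}(a_n)$ and $r_n-s_n=\mu(r_n-b_n)\in\pnX{B}(b_n)$, and $\|x_n-s_n\|$ is small relative to $d_{A\cap B}(x_n)$.

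The heart of the argument is to combine the two relaxed-projection steps using the flexibility that $\min\{\lambda,\mu\}<2$, which forces $\eta:=1-\alpha+\alpha(1-\lambda)(1-\mu)\neq 1-\alpha$ in a way that $\lambda+\mu-\lambda\mu>0$. Concretely, from $x_n-s_n=(x_n-r_n)+(r_n-s_n)=\lambda(x_n-a_n)+\mu(r_n-b_n)$ being small, Lemma~\ref{l:angle} (applied with the triple $x_n,r_n,s_n$, after checking the nondegeneracy $r_n\notin\{x_n,s_n\}$, which holds because otherwise $x_n$ would already be essentially a fixed point and $d_{A\cap B}(x_n)$ could be bounded using coercivity-type estimates) gives $\widehat{x_nr_ns_n}\to 0$; that is, the directions $x_n-r_n$ and $s_n-r_n$ become nearly antiparallel—or rather $x_n-r_n$ and $r_n-s_n$ become nearly parallel. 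This, together with the $(\varepsilon,\delta)$-regularity of $A$ at $a_n$ and of $B$ at $b_n$, is meant to propagate: using Lemma~\ref{l:circumcircle} with the projections of a suitable point onto $\aff\{x_n,r_n\}$ and $\aff\{r_n,s_n\}$, one controls how far $a_n$ and $b_n$ can be from a common point, ultimately bounding $d_{A\cap B}(x_n)$ in terms of $\max\{d_A(x_n),d_B(x_n)\}=\max\{\|x_n-a_n\|,d_B(x_n)\}$ times a factor that tends to a quantity strictly smaller than $1/\kappa$ after rescaling—contradicting linear regularity, or contradicting $\varepsilon_n\downarrow0$.

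The main obstacle, and the step I would spend the most care on, is making the geometric "angle propagation" rigorous: from the smallness of $\|x_n-s_n\|$ one extracts that the two normal directions $x_n-a_n$ and $r_n-b_n$ are nearly aligned, but to conclude $d_{A\cap B}(x_n)$ is comparably small one must show $a_n$ and $b_n$ are nearly the \emph{same} point, or at least both near a common point of $A\cap B$. This is where superregularity enters decisively—it lets one pass from near-alignment of proximal normals to near-coincidence of the foot points via the $(\varepsilon,\delta)$-regular inequality $\scal{u}{a_n-b_n}\geq -\varepsilon\|u\|\|a_n-b_n\|$, and Lemma~\ref{l:circumcircle} quantifies the remaining transversal displacement through $\sin\widehat{x_nr_ns_n}\to 0$. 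Once $d_{A\cap B}(x_n)\lesssim o(1)\cdot\max\{d_A(x_n),d_B(x_n)\}+(\text{terms controlled by }\|x_n-s_n\|)$ is established, normalizing by $d_{A\cap B}(x_n)$ (which is positive, else the inequality is vacuous) and invoking $\kappa$-linear regularity yields $1\leq o(1)$, the desired contradiction. I would then record that this gives $(A\cap B,\nu)$-quasi coercivity on $\ball{w}{\delta'}$ for some explicit $\nu\in\RPP$ and $\delta'\in\RPP$, which is the assertion of the proposition.
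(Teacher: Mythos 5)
Your overall architecture does match the paper's: argue by contradiction with sequences $\zeta_n\downarrow 0$, $x_n\to w$, $\|x_n-s_n\|<\zeta_n d_{A\cap B}(x_n)$, introduce $a_n\in P_Ax_n$, $r_n\in P_A^\lambda x_n$, $b_n\in P_Br_n$, use Lemma~\ref{l:angle} to get $\widehat{x_nr_ns_n}\to 0$, bring in a common point (in the paper, $p_n\in P_{A\cap B}r_n$) and Lemma~\ref{l:circumcircle}. But there is a genuine gap: your sketch never actually uses the hypothesis $\min\{\lambda,\mu\}<2$. You gesture at ``$\lambda+\mu-\lambda\mu>0$'' in the plan, but no step of your argument invokes it, and every ingredient you do invoke (smallness of $\|x_n-s_n\|$, angle convergence, superregularity, circumcircle bound) is equally available when $\lambda=\mu=2$ --- a case in which the statement is \emph{false}, as the remark following the proposition shows ($A=\epi|\cdot|$, $B=\RR\times\{0\}$). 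So the sketch as written cannot be completed; the missing ingredient is precisely the paper's Part~3: one first proves $\lambda>1$ (if $\lambda\leq 1$, the regularity inequality $\scal{x_n-a_n}{p_n-a_n}\leq\varepsilon_n\|x_n-a_n\|\|p_n-a_n\|$ forces $\cos\widehat{x_nr_np_n}\to 0$, contradicting the positive lower bound on $\cos\widehat{s_nr_np_n}$ obtained from regularity at $b_n$ together with $\widehat{x_nr_ns_n}\to 0$), and then shows $\sigma_n=\tfrac{\mu|\lambda-1|}{\lambda}\cdot\tfrac{1+\varepsilon_n}{1-\varepsilon_n}\to\tfrac{\mu(\lambda-1)}{\lambda}<1$, where the strict inequality holds exactly because $\lambda=\mu=2$ is excluded. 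The bound $\|a_n-r_n\|\leq\sigma\|b_n-r_n\|$ with $\sigma<1$ is what keeps the projections $u_n,v_n$ of $p_n$ onto $\aff\{x_n,r_n\}$ and $\aff\{r_n,s_n\}$ at distance at least $\tfrac{1-\sigma}{2}\|b_n-r_n\|\geq\tfrac{1-\sigma}{2\kappa}\|p_n-r_n\|>0$ apart, which is then incompatible with Lemma~\ref{l:circumcircle}'s bound $\|u_n-v_n\|\leq\|p_n-r_n\|\sin\widehat{x_nr_ns_n}\to 0$. Without this mechanism your ``angle propagation'' has no quantitative teeth.

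Two secondary imprecisions: the superregularity inequality cannot be applied to the pair $(a_n,b_n)$ as you wrote (they lie in different sets); the paper applies $(\varepsilon,\delta)$-regularity of $A$ at $a_n$ and of $B$ at $b_n$ against the \emph{common} point $p_n\in P_{A\cap B}r_n$, whose distance estimates ($\|p_n-r_n\|\leq\kappa\overline{\sigma}_n\|b_n-r_n\|$, etc.) also require the linear regularity constant $\kappa$, not just its existence. And the final contradiction is not ``$1\leq o(1)$ against linear regularity'': the paper's contradiction is the purely geometric clash between the lower and upper bounds on $\|u_n-v_n\|$ just described; your proposed inequality $d_{A\cap B}(x_n)\lesssim o(1)\max\{d_A(x_n),d_B(x_n)\}$ is never derived and, as argued above, could not be derived without the $\sigma<1$ step in any case.
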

\begin{proof}
By assumption, there exist $\kappa\in\RPP$ and $\delta\in \RPP$ such that 
\begin{equation}\label{e:170919c}
\forall x\in \ball{w}{\delta},\quad d_{A\cap B}(x)\leq \kappa\max\{d_A(x),d_B(x)\}.
\end{equation}
Now suppose on the contrary that $T_{\lambda,\mu}^\alpha$ is not $(A\cap B)$-quasi coercive around $w$. By Lemma~\ref{l:averaged-coer}, $P_B^\mu P_A^\lambda$ is not $(A\cap B)$-quasi coercive around $w$, which means that there exist sequences $\zeta_n\downarrow 0$, $x_n\to w$, $s_n\in P_B^\mu P_A^\lambda x_n$ such that
\begin{equation}\label{e:170919d}
0\leq \|x_n-s_n\|<\zeta_n d_{A\cap B}(x_n).
\end{equation}
We find $a_n\in P_A x_n$, $r_n\in P_A^\lambda x_n$, and $b_n\in P_B r_n$ such that
\begin{equation}
\label{e:relaxed}
x_n-r_n= \lambda(x_n-a_n) \quad\text{and}\quad s_n-r_n=\mu(b_n-r_n).
\end{equation}  
Without loss of generality, we assume that $\zeta_n< \min\{\frac{1}{\kappa}, \frac{\lambda}{\kappa}, \frac{\mu}{\kappa}\}$ and $x_n\in \ball{w}{\delta/(1+\lambda)}\subset \ball{w}{\delta}$ for all $\nnn$. 
It then follows from \eqref{e:170919c} and \eqref{e:170919d} that 
\begin{equation}
\forall\nnn,\quad \|x_n-s_n\|< \zeta_n d_{A\cap B}(x_n)\leq \zeta_n\kappa \max\{d_A(x_n),d_B(x_n)\}.
\end{equation}
Let $n\in \NN$. Since $a_n\in A$ and $b_n\in B$, we have $d_A(x_n)\leq \|x_n-a_n\|= \tfrac{1}{\lambda}\|x_n-r_n\|$ and 
\begin{equation}
d_B(x_n)\leq \|x_n-b_n\|\leq \|x_n-r_n\|+ \|b_n-r_n\|= \|x_n-r_n\|+ \tfrac{1}{\mu}\|s_n-r_n\|. 
\end{equation}
Therefore,
\begin{equation}\label{e:170919e}
\forall\nnn,\quad \|x_n-s_n\|< \varepsilon_n(\|x_n-r_n\|+\|s_n-r_n\|), 
\quad\text{where~} \varepsilon_n:= \zeta_n\kappa\max\{1,\tfrac{1}{\lambda},\tfrac{1}{\mu}\}\to 0.
\end{equation}
 Noting that $\|x_n-s_n\|\geq \|x_n-r_n\|- \|s_n-r_n\|$ and that $\varepsilon_n< 1$, we obtain 
\begin{equation}
\|x_n-r_n\|\leq
\tfrac{1+\varepsilon_n}{1-\varepsilon_n}\|s_n-r_n\|,
\end{equation}
which combined with \eqref{e:relaxed} yields
\begin{equation}
\label{e:170919h-b}
\forall\nnn,\quad
\|a_n-r_n\|= \tfrac{|\lambda-1|}{\lambda}\|x_n-r_n\|\leq \sigma_n\|b_n-r_n\|,
\quad\text{where~}
\sigma_n:=\tfrac{\mu|\lambda-1|}{\lambda}\cdot\tfrac{1+\varepsilon_n}{1-\varepsilon_n}.
\end{equation}

Next, let $p_n\in P_{A\cap B} r_n$. Since $x_n\to w$, Fact~\ref{f:ImBall}\ref{f:ImBall_P} implies that $a_n, r_n, b_n, s_n, p_n\to w$. In turn, $x_n-a_n\in \pnX{A}(a_n)$ and $r_n-b_n\in \pnX{B}(b_n)$. By the superregularity of $A$ and $B$ at $w$, taking subsequences if necessary and without relabeling, we can assume that
\begin{subequations}\label{e:170919p}
\begin{align}
\scal{x_n-a_n}{p_n-a_n}&\leq \varepsilon_n\|x_n-a_n\|\|p_n-a_n\|,\quad\text{and}
\label{e:170919p-a}\\
\scal{r_n-b_n}{p_n-b_n}&\leq \varepsilon_n\|r_n-b_n\|\|p_n-b_n\|.
\label{e:170919p-b}
\end{align}
\end{subequations}
Now we divide the proof into several parts.

{\bf\em Part 1:} We claim that
\begin{equation}\label{e:170919i}
\forall\nnn,\quad
x_n\notin \{r_n, a_n\}
\quad\text{and}\quad 
r_n\notin \{s_n, b_n, p_n\}
\end{equation}
and that 
\begin{equation}
\label{e:170919h-c}
\widehat{s_nr_nx_n}\to 0 \quad\text{and}\quad \cos\widehat{x_nr_np_n}- \cos\widehat{s_nr_np_n}\to 0 \quad\text{as~} n\to +\infty.
\end{equation}
Indeed, if $x_n=r_n$ or $x_n=a_n$ for some $n$, then $x_n=a_n=r_n\in A$ and, by \eqref{e:170919d}, 
\begin{subequations}
\begin{align}
\zeta_n d_{A\cap B}(x_n)&>\|x_n-s_n\|=\|r_n-s_n\|
=\mu\|r_n-b_n\|\\
&=\mu d_B(r_n)=\mu d_B(x_n)=\mu\max\{d_A(x_n),d_B(x_n)\}
\geq\tfrac{\mu}{\kappa}d_{A\cap B}(x_n),
\end{align}
\end{subequations}
which contradicts the fact that $\zeta_n< \frac{\mu}{\kappa}$. Similarly, if $r_n=s_n$ or $r_n=b_n$ for some $n$, then $r_n=b_n=s_n\in B$ and, by \eqref{e:170919d}, 
\begin{subequations}
\begin{align}
\zeta_n d_{A\cap B}(x_n)&>\|x_n-s_n\|=\|x_n-r_n\|
=\lambda\|x_n-a_n\|\\
&=\lambda d_A(x_n)=\lambda \max\{d_A(x_n),d_B(x_n)\}
\geq\tfrac{\lambda}{\kappa}d_{A\cap B}(x_n),
\end{align}
\end{subequations}
which contradicts the fact that $\zeta_n< \frac{\lambda}{\kappa}$. So we complete \eqref{e:170919i} due to $\|p_n-r_n\|\geq d_B(r_n)= \|b_n-r_n\|= \frac{1}{\mu}\|s_n-r_n\|$.
Now combining \eqref{e:170919e} and Lemma~\ref{l:angle}, we arrive at \eqref{e:170919h-c}.

{\bf\em Part~2:} Let $\overline{\sigma}_n:=\max\{1,\sigma_n\}$. Since $\varepsilon_n\to 0$, the sequence $(\sigma_n)_\nnn$ is bounded, and so is $(\overline{\sigma}_n)_\nnn$. We claim that
\begin{subequations}
\begin{align}
\forall\nnn,\quad 
&\|p_n-r_n\|\leq \overline{\sigma}_n\kappa\|b_n-r_n\|,
\label{e:170920c-a}\\
&\max\{\|p_n-a_n\|,\|p_n-b_n\|\}\leq \overline{\sigma}_n(\kappa+1)\|b_n-r_n\|.
\label{e:170920c-b}
\end{align}
\end{subequations}
Let $n\in \NN$. Since $x_n\in \ball{w}{\delta/(1+\lambda)}$ and $r_n\in P_A^\lambda x_n$, Fact~\ref{f:ImBall}\ref{f:ImBall_P} yields $r_n\in \ball{w}{\delta}$. Using linear regularity and \eqref{e:170919h-b}, we estimate
\begin{subequations}\label{e:170919u}
\begin{align}
\|p_n-r_n\|&=d_{A\cap B}(r_n)
\leq \kappa\max\{d_A(r_n),d_B(r_n)\}\\
&\leq \kappa\max\{\|a_n-r_n\|,\|b_n-r_n\|\}\\
&\leq \kappa\max\{\sigma_n\|b_n-r_n\|,\|b_n-r_n\|\}
=\kappa\overline{\sigma}_n\|b_n-r_n\|.
\end{align}
\end{subequations}
So \eqref{e:170920c-a} holds. Combining with \eqref{e:170919h-b} gives
\begin{subequations}
\begin{align}
\|p_n-a_n\|&\leq \|p_n-r_n\|+ \|a_n-r_n\|
\leq \big(\kappa\overline{\sigma}_n+\sigma_n\big)\|b_n-r_n\|,\\
\|p_n-b_n\|&\leq \|p_n-r_n\|+ \|b_n-r_n\|
\leq (\kappa\overline{\sigma}_n+1)\|b_n-r_n\|.
\end{align}
\end{subequations}
Thus, \eqref{e:170920c-b} holds.

{\bf\em Part 3:} We claim that $\lambda> 1$ and that there exists $\sigma\in \left]0, 1\right[$ satisfying
\begin{equation}\label{e:170927b}
\sigma_n\leq \sigma< 1
\quad\text{and}\quad
\overline{\sigma}_n=1
\quad\text{for all $n$ sufficiently large}.
\end{equation}
Let $\overline{\sigma}$ be an upper bound of the bounded sequence $(\overline{\sigma}_n)_\nnn$. 
Using \eqref{e:170919p-b} and \eqref{e:170920c-b}, we have
\begin{subequations}
\begin{align}
\cos\widehat{s_nr_np_n}&= \frac{\scal{s_n-r_n}{p_n-r_n}}{\|s_n-r_n\|\|p_n-r_n\|}= \frac{\scal{b_n-r_n}{p_n-r_n}}{\|b_n-r_n\|\|p_n-r_n\|} \\
&=\frac{\scal{b_n-r_n}{p_n-b_n} +\|b_n-r_n\|^2}{\|b_n-r_n\|\|p_n-r_n\|} \\
&\geq \frac{-\varepsilon_n \|b_n-r_n\|\|p_n-b_n\| +\|b_n-r_n\|^2}{\|b_n-r_n\|\|p_n-r_n\|} \\
&\geq \frac{1-\varepsilon_n \overline{\sigma}(\kappa+1)}{\kappa\overline{\sigma}}\to \frac{1}{\kappa\overline{\sigma}}> 0.
\end{align}
\end{subequations}
It follows that
\begin{equation}
\label{e:srp}
\cos\widehat{s_nr_np_n}> \frac{1}{2\kappa\overline{\sigma}}> 0
\quad\text{for all $n$ sufficiently large}.
\end{equation}
Combining with \eqref{e:170919h-c} yields
\begin{equation}
\label{e:xrp}
\cos\widehat{x_nr_np_n}> \frac{1}{4\kappa\overline{\sigma}}> 0
\quad\text{for all $n$ sufficiently large}.
\end{equation}
Suppose that $\lambda\leq 1$. 
Using \eqref{e:170919p-a}, \eqref{e:170920c-b} and noting that $\|p_n-r_n\|=d_{A\cap B}(r_n)\geq d_B(r_n)=\|r_n-b_n\|$, we obtain that
\begin{subequations}
\begin{align}
\cos\widehat{x_nr_np_n}&= \frac{\scal{x_n-r_n}{p_n-r_n}}{\|x_n-r_n\|\|p_n-r_n\|}= \frac{\scal{x_n-a_n}{p_n-r_n}}{\|x_n-a_n\|\|p_n-r_n\|} \\
&= \frac{\scal{x_n-a_n}{p_n-a_n}+ (\lambda-1)\|x_n-a_n\|^2}{\|x_n-a_n\|\|p_n-r_n\|} \\
&\leq \varepsilon_n\frac{\|p_n-a_n\|}{\|p_n-r_n\|}+ (\lambda-1)\frac{\|x_n-a_n\|}{\|p_n-r_n\|} 
\leq \varepsilon_n\overline{\sigma}(\kappa+1)\to 0,
\end{align}
\end{subequations}
which contradicts \eqref{e:xrp}. We must therefore have $\lambda>1$.
Now notice that $\frac{|\lambda-1|}{\lambda}=\frac{\lambda-1}{\lambda}\leq\frac{1}{2}$ and $\mu\leq 2$, where only one equality can happen. Thus,
\begin{equation}
\sigma_n= \tfrac{\mu(\lambda-1)}{\lambda}\cdot \tfrac{1+\varepsilon_n}{1-\varepsilon_n}\to \tfrac{\mu(\lambda-1)}{\lambda}< 1.
\end{equation}
The claim then follows.

{\bf\em Part 4:} Define lines $L_{1,n}:= \aff\{x_n, r_n\}$, $L_{2,n}:= \aff\{r_n, s_n\}$ and projections $u_n:= P_{L_{1,n}}p_n$, $v_n:= P_{L_{2,n}}p_n$. We have that 
\begin{equation}\label{e:170919s}
u_n:=a_n+\eta_1\frac{x_n-a_n}{\|x_n-a_n\|}
\quad\text{with}\quad
\eta_1:=\frac{\scal{x_n-a_n}{p_n-a_n}}{\|x_n-a_n\|}
\leq \varepsilon_n\|p_n-a_n\|
\end{equation}
and that 
\begin{equation}\label{e:170919r}
v_n:=b_n+\eta_2\frac{r_n-b_n}{\|r_n-b_n\|},
\quad\text{with}\quad
\eta_2:=\frac{\scal{r_n-b_n}{p_n-b_n}}{\|r_n-b_n\|}
\leq \varepsilon_n\|p_n-b_n\|,
\end{equation}
where the upper bound for $\eta_1$ and $\eta_2$ follows from \eqref{e:170919p}. 
By using \eqref{e:170919h-b}, \eqref{e:170920c-b}, and \eqref{e:170927b}, for all $n$ sufficiently large,
\begin{subequations}
\begin{align}
\|u_n-r_n\|&\leq \|a_n-r_n\|+ \eta_1\leq \|a_n-r_n\|+ \varepsilon_n\|p_n-a_n\|
\leq \big(\sigma+ \varepsilon_n(\kappa+1)\big)\|b_n-r_n\|, \\
\|v_n-r_n\|&= |\|b_n-r_n\|-\eta_2|
\geq \|b_n-r_n\|-\varepsilon_n\|p_n-b_n\|
\geq\big(1- \varepsilon_n(\kappa+1)\big)\|b_n-r_n\|,
\end{align}
\end{subequations}
and so 
\begin{equation}
\label{e:vnun}
\|u_n-v_n\|\geq \|v_n-r_n\|-\|u_n-r_n\|\geq \tfrac{1-\sigma}{2}\|b_n-r_n\|,
\end{equation}
which together with \eqref{e:170919i}, \eqref{e:170920c-a}, and \eqref{e:170927b} yields
\begin{equation}
\|u_n-v_n\|\geq \tfrac{1-\sigma}{2\kappa}\|p_n-r_n\|> 0.
\end{equation}
On the other hand, for all $n$ sufficiently large, $r_n\notin \{u_n, v_n\}$ due to \eqref{e:srp} and \eqref{e:xrp}. Noting also from \eqref{e:170919i} that $r_n\notin \{x_n, s_n\}$, we then apply Lemma~\ref{l:circumcircle} to get $\|u_n-v_n\|\leq \|p_n-r_n\|\sin\widehat{x_nr_ns_n}$, hence
\begin{equation}
0< \tfrac{1-\sigma}{2\kappa}\|p_n-r_n\|\leq \|p_n-r_n\|\sin\widehat{x_nr_ns_n}.
\end{equation}
Using \eqref{e:170919h-c}, we derive that
\begin{equation}
0<\tfrac{1-\sigma}{2\kappa}\leq \sin\widehat{x_nr_ns_n}\to 0,
\end{equation}
which is a contradiction.
\end{proof}

\begin{remark}
In Proposition~\ref{p:coer-DR-lin}, the parameter condition $\min\{\lambda,\mu\}<2$ cannot be removed. For example, we consider two convex (hence, superregular) sets $A:=\epi|\cdot|=\menge{(s,t)\in\RR^2}{s\leq|t|}$ and $B:=\RR\times\{0\}$ in $\RR^2$. Clearly, $\{A,B\}$ is linearly regular at $(0,0)\in A\cap B$. Consider the DR operator $T:=T_{2,2}^\alpha$ for some $\alpha\in\left]0,1\right]$ and $x=(0,t)$ for $t<0$. In this case $\lambda=\mu=2$, and we check that $x=Tx$, while $d_{A\cap B}(x)= |t|$. Therefore, $T$ is not $(A\cap B)$-quasi coercive at $(0,0)$.
\end{remark}

We also obtain a global version of Proposition~\ref{p:coer-DR-lin}.
\begin{proposition}[global quasi coercivity of convex gDR operators]
\label{p:gcoer-DR-lin}
Let $A$ and $B$ be closed convex subsets of $X$ such that $A\cap B\neq \varnothing$ and that $\{A,B\}$ is boundedly linearly regular. Let $\lambda,\mu\in \left]0,2\right]$ be such that $\min\{\lambda,\mu\}< 2$, and let $\alpha\in \RPP$. Then $T_{\lambda,\mu}^\alpha$ is $(A\cap B)$-quasi coercive on every bounded set $S$ of $X$.
\end{proposition}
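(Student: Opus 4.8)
The plan is to obtain the statement as a routine local-to-global passage, combining the local result Proposition~\ref{p:coer-DR-lin} with the globalization device Lemma~\ref{l:global-coer}, applied with $C := A\cap B$ and with the closed set $L := X$. First I would assemble the three ingredients that Lemma~\ref{l:global-coer} requires. Since $\lambda,\mu\in\left]0,2\right]$, Lemma~\ref{l:cont-avg}\ref{l:cont-avg_cont} gives that $T_{\lambda,\mu}^\alpha$ is continuous and single-valued. Next, because $A$ and $B$ are closed convex with $A\cap B\neq\varnothing$, we have $g=0$ and $E=A\cap B$; since $\min\{\lambda,\mu\}<2$ by hypothesis, Lemma~\ref{l:cvxFix}\ref{l:cvxFix_intersect} yields $\Fix T_{\lambda,\mu}^\alpha = A\cap B = C$, so in particular $\Fix T_{\lambda,\mu}^\alpha\cap X\subseteq C$, as needed.

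It then remains to check the local quasi coercivity hypothesis of Lemma~\ref{l:global-coer}: for every $w\in A\cap B$, the operator $T_{\lambda,\mu}^\alpha$ should be $(A\cap B)$-quasi coercive on $\ball{w}{\delta}$ for some $\delta\in\RPP$. For this I would fix $w\in A\cap B$ and verify the hypotheses of Proposition~\ref{p:coer-DR-lin}. Convexity of $A$ and $B$ implies, via \cite[Remark~8.2(v)]{BLPW13a}, that both $A$ and $B$ are superregular at $w$. Bounded linear regularity of $\{A,B\}$, applied to the bounded set $\ball{w}{1}$, produces $\kappa\in\RPP$ such that $\{A,B\}$ is $\kappa$-linearly regular on $\ball{w}{1}$, hence $\{A,B\}$ is linearly regular around $w$. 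Together with $\min\{\lambda,\mu\}<2$, Proposition~\ref{p:coer-DR-lin} applies and gives precisely that $T_{\lambda,\mu}^\alpha$ is $(A\cap B)$-quasi coercive around $w$, i.e., on some ball $\ball{w}{\delta}$.

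With these facts in hand, Lemma~\ref{l:global-coer} (with $C = A\cap B$ and $L = X$) immediately concludes that $T_{\lambda,\mu}^\alpha$ is $(A\cap B)$-quasi coercive on $S\cap X = S$ for every bounded set $S$ of $X$, which is the assertion. I do not expect a genuine obstacle here; the proof is essentially bookkeeping over the earlier results. The only point that needs care is the identification $\Fix T_{\lambda,\mu}^\alpha = A\cap B$, which can fail when $\lambda=\mu=2$ (as the example in the Remark following Proposition~\ref{p:coer-DR-lin} shows); this is exactly where the standing assumption $\min\{\lambda,\mu\}<2$ enters, both in the present argument and inside Proposition~\ref{p:coer-DR-lin} itself.
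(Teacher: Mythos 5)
Your proposal is correct and follows essentially the same route as the paper: identify $\Fix T_{\lambda,\mu}^\alpha=A\cap B$ via Lemma~\ref{l:cont-avg}\ref{l:cont-avg_cont} and Lemma~\ref{l:cvxFix}\ref{l:cvxFix_intersect}, obtain local quasi coercivity at each $w\in A\cap B$ from convexity (superregularity) plus bounded linear regularity through Proposition~\ref{p:coer-DR-lin}, and globalize with Lemma~\ref{l:global-coer} taking $L=X$. No gaps; the bookkeeping, including where $\min\{\lambda,\mu\}<2$ is used, matches the paper's argument.
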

\begin{proof}
On the one hand, by assumption, $T_{\lambda,\mu}^\alpha$ is a continuous single-valued operator and $\Fix T_{\lambda,\mu}^\alpha= A\cap B$ due to Lemma~\ref{l:cont-avg}\ref{l:cont-avg_cont} and Lemma~\ref{l:cvxFix}\ref{l:cvxFix_intersect}. 
On the other hand, for every $w\in A\cap B$, $\{A, B\}$ is superregular at $w$ (see \cite[Remark~8.2(v)]{BLPW13a}) and linearly regular around $w$, hence, by Proposition~\ref{p:coer-DR-lin}, $T_{\lambda,\mu}^\alpha$ is $(A\cap B)$-quasi coercive around $w$.  
Applying Lemma~\ref{l:global-coer} with $L= X$, we obtain that $T_{\lambda,\mu}^\alpha$ is $(A\cap B)$-quasi coercive on every bounded set $S$ of $X$.
\end{proof}

As a supplement for the above result, we refer to \cite[Corollary~5]{BBL99} for the most common \emph{sufficient condition} that guarantees bounded linear regularity for finite systems of convex sets.

\section{Convergence rate analysis}
\label{s:convg}

In this section, let $m$ be a positive integer, set $I:= \{1, \dots, m\}$, and let $\{C_i\}_{i\in I}$ be a system of closed (possibly nonconvex) subsets of $X$ with $C:=\bigcap_{i\in I}C_i\neq \varnothing$. 
Given an ordered tuple $(T_i)_{i\in I}$ of set-valued operators from $X$ to $X$, the \emph{cyclic algorithm} associated with $(T_i)_{i\in I}$ generates \emph{cyclic sequences} $(x_n)_\nnn$ by
\begin{equation}
\label{e:cycseq}
\forall\nnn,\quad x_{n+1}\in T_{n+1} x_n, \quad\text{where~} x_0 \in X.
\end{equation}
Here we adopt the convention that
\begin{equation}
\label{e:cvn}
\forall\nnn,\ \forall i\in I,\quad T_{mn+i}:= T_i.
\end{equation}

Recall that a sequence $(x_n)_\nnn$ is said to converge to a point $\ox$ with \emph{$R$-linear rate} $\rho\in\left[0,1\right[$ if there exists a constant $\sigma\in\RP$ such that
\begin{equation}
\forall\nnn,\quad\|x_n-\ox\|\leq\sigma\rho^n.
\end{equation}
In what follows, we denote $[\rho]_+ :=\max\{0, \rho\}$ for $\rho\in \RR$.
\begin{theorem}[sufficient condition for linear convergence]
\label{t:lincvg}
Let $w\in C$, $\delta\in \RPP$, and $\nu\in \left]0, 1\right]$. For every $i\in I$, let $\gamma_i\in \left[1, +\infty\right[$ and $\beta_i\in \RPP$. Let $(x_n)_\nnn$ be a cyclic sequence generated by $(T_i)_{i\in I}$. Suppose that
\begin{enumerate}[label={\rm(\alph*)}]
\item $\{C_i\}_{i\in I}$ is $\kappa$-linearly regular on $\ball{w}{\delta/2}$ for some $\kappa \in \RPP$.
\item For every $i \in I$, $T_i$ is $(C_i\cap \ball{w}{\delta}, \gamma_i, \beta_i)$-quasi firmly Fej\'er monotone and $(C_i, \nu)$-quasi coercive on $\ball{w}{\delta/2}$.
\end{enumerate}
Set $\Gamma :=(\gamma_1\cdots \gamma_m)^{1/2}$, $\delta_0 :=\frac{\delta}{2\Gamma}\gamma_m^{1/2}$, and 
\begin{equation}
\label{e:rate}
\rho:= \bigg[\Gamma^2-\frac{\nu^2}{\kappa^2}\Big(\sum_{i\in I}\frac{1}{\beta_i}\Big)^{-1}\bigg]_+^{1/2}.
\end{equation}
Then the following hold:
\begin{enumerate}
\item\label{t:lincvg_ine}
$\forall x_0\in\ball{w}{\delta_0},\quad d_C(x_m)\leq \rho d_C(x_0)$. 
\item\label{t:lincvg_local}
If $\rho<1$, then whenever $(x_{mn})_\nnn\subset \ball{w}{\delta_0}$ or $x_0\in\bball{w}{\frac{\delta_0(1-\rho)}{2+\Gamma-\rho}}$, the sequence $(x_n)_\nnn$ converges $R$-linearly to a point $\ox\in C$ with rate $\rho^{1/m}$.
\item\label{t:lincvg_global}
If $\gamma_i= 1$ for every $i\in I$, then whenever $x_0\in \ball{w}{\delta/2}$, the sequence $(x_n)_\nnn$ converges $R$-linearly to a point $\ox\in C$ with rate  
\begin{equation}
\bigg[1-\frac{\nu^2}{\kappa^2}\Big(\sum_{i\in I}\frac{1}{\beta_i}\Big)^{-1}\bigg]_+^{1/2m}.
\end{equation}
\end{enumerate}
\end{theorem}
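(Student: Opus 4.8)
The strategy is to prove \ref{t:lincvg_ine} directly from the three hypotheses, and then to bootstrap it into the convergence statements \ref{t:lincvg_local} and \ref{t:lincvg_global}.

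\emph{Part~\ref{t:lincvg_ine}.} First I would check that the orbit does not escape: for $x_0\in\ball{w}{\delta_0}$, applying the quasi firm Fej\'er inequality of $T_i$ at $x_{i-1}$ with the anchor $w\in C\subseteq C_i\cap\ball{w}{\delta}$ gives, inductively, $\|x_k-w\|\leq(\gamma_1\cdots\gamma_k)^{1/2}\|x_0-w\|$; the choice $\delta_0=\frac{\delta}{2\Gamma}\gamma_m^{1/2}$ makes $(\gamma_1\cdots\gamma_k)^{1/2}\delta_0\leq\delta/2$ for $k\leq m-1$, so $x_0,\dots,x_{m-1}\in\ball{w}{\delta/2}$ and each step is legitimate. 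This lets me invoke Lemma~\ref{l:composition} for $(T_1,\dots,T_m)$ along the cycle: for every $\ox\in C\cap\ball{w}{\delta}$, $\gamma\|x_0-\ox\|^2\geq\|x_m-\ox\|^2+\beta\big(\sum_{i\in I}\|x_{i-1}-x_i\|\big)^2$ with $\gamma=\Gamma^2$ and $\beta=\big(\sum_{i\in I}\frac{1}{\beta_i}\big)^{-1}$. Next, quasi coercivity gives $\|x_{i-1}-x_i\|\geq\nu\,d_{C_i}(x_{i-1})$, and since each $d_{C_i}$ is $1$-Lipschitz and $\nu\leq 1$, moving the base point back to $x_0$ yields $d_{C_i}(x_0)\leq\frac{1}{\nu}\sum_{j=1}^{i}\|x_{j-1}-x_j\|\leq\frac{1}{\nu}\sum_{i\in I}\|x_{i-1}-x_i\|$; linear regularity then gives $\sum_{i\in I}\|x_{i-1}-x_i\|\geq\frac{\nu}{\kappa}d_C(x_0)$. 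Finally, taking $\ox\in P_C(x_0)$ (which lies in $C\cap\ball{w}{\delta}$ because $\|\ox-w\|\leq 2\|x_0-w\|\leq 2\delta_0\leq\delta$) turns the composition inequality into $d_C^2(x_m)\leq\big(\gamma-\frac{\nu^2}{\kappa^2}\beta\big)d_C^2(x_0)$, and since $d_C^2(x_m)\geq 0$ the right-hand factor may be replaced by its positive part $\rho^2$.

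\emph{Part~\ref{t:lincvg_local}.} I would record two consequences of \ref{t:lincvg_ine} valid whenever $x_{mn}\in\ball{w}{\delta_0}$: iterating gives $d_C(x_{m(n+1)})\leq\rho\,d_C(x_{mn})$, and running the quasi firm Fej\'er inequalities along the $n$-th cycle with anchor $P_C(x_{mn})$ gives $\|x_{mn+k}-x_{mn}\|\leq(\Gamma+1)d_C(x_{mn})$ for $0\leq k\leq m$ (here $\|x_{mn+k}-P_C(x_{mn})\|\leq\Gamma\,d_C(x_{mn})$, using $(\gamma_1\cdots\gamma_k)^{1/2}\leq\Gamma$). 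If $(x_{mn})_\nnn\subset\ball{w}{\delta_0}$, these give $d_C(x_{mn})\leq\rho^n d_C(x_0)$ and $\sum_n\|x_{m(n+1)}-x_{mn}\|<\infty$, so $(x_{mn})_\nnn$ is Cauchy with limit $\ox$; $d_C(x_{mn})\to 0$ forces $\ox\in C$; and $\|x_{mn+k}-\ox\|\leq\|x_{mn+k}-x_{mn}\|+\sum_{j\geq n}\|x_{m(j+1)}-x_{mj}\|\leq(\Gamma+1)d_C(x_0)\rho^n\tfrac{2-\rho}{1-\rho}$, which, on writing $\ell=mn+k$ with $0\leq k<m$ and using $\rho^n\leq\rho^{-(m-1)/m}(\rho^{1/m})^{\ell}$, gives $R$-linear convergence with rate $\rho^{1/m}$ (for $\rho=0$ the same estimates force $x_\ell=x_m\in C$ for $\ell\geq m$). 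For the remaining case $x_0\in\bball{w}{\frac{\delta_0(1-\rho)}{2+\Gamma-\rho}}$, I would show by induction on $n$ that all $x_{mn}$ stay in $\ball{w}{\delta_0}$, so that the previous case applies: assuming $x_{m0},\dots,x_{mn}\in\ball{w}{\delta_0}$, telescoping and the step bound give $\|x_{m(n+1)}-w\|\leq\|x_0-w\|+\sum_{j=0}^{n}(\Gamma+1)d_C(x_{mj})\leq\|x_0-w\|\big(1+\tfrac{\Gamma+1}{1-\rho}\big)=\tfrac{2+\Gamma-\rho}{1-\rho}\|x_0-w\|\leq\delta_0$.

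\emph{Part~\ref{t:lincvg_global}.} With $\gamma_i=1$ for all $i$ one has $\Gamma=1$, hence $\delta_0=\delta/2$, and each $T_i$ is genuinely firmly Fej\'er monotone with respect to $C_i\cap\ball{w}{\delta}$ on $\ball{w}{\delta/2}$; using the anchor $w$ shows $\|x_{n+1}-w\|\leq\|x_n-w\|$ whenever $x_n\in\ball{w}{\delta/2}$, so any $x_0\in\ball{w}{\delta/2}$ keeps the entire orbit — in particular $(x_{mn})_\nnn$ — inside $\ball{w}{\delta_0}$. Since $\nu,\kappa>0$ and $\beta>0$ we have $\rho<1$, so Part~\ref{t:lincvg_local} (first case) applies and gives the rate $\rho^{1/m}$, which is exactly $\big[1-\tfrac{\nu^2}{\kappa^2}(\sum_{i\in I}\tfrac{1}{\beta_i})^{-1}\big]_+^{1/2m}$ because $\Gamma^2=1$.

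The main obstacle is the bookkeeping in Part~\ref{t:lincvg_local}: ensuring every use of quasi firm Fej\'er monotonicity, quasi coercivity, and linear regularity occurs at a point actually in $\ball{w}{\delta/2}$ (and every anchor in $\ball{w}{\delta}$), and getting the induction in the second case to close with \emph{precisely} the constant $\frac{\delta_0(1-\rho)}{2+\Gamma-\rho}$ — this is why one wants the sharp step estimate $\|x_{mn+k}-x_{mn}\|\leq(\Gamma+1)d_C(x_{mn})$ rather than a cruder bound of the form $\sqrt{\gamma/\beta}\,d_C(x_{mn})$.
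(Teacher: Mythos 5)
Your proposal is correct, but it takes a different route from the paper in an important organizational sense: the paper does not reprove parts \ref{t:lincvg_ine} and \ref{t:lincvg_local} at all — it simply cites \cite[Theorem~4.5]{DP16} for them — and for part \ref{t:lincvg_global} it invokes \cite[Lemma~3.4(ii)]{DP16} to keep the orbit in $\ball{w}{\delta/2}$ before applying \ref{t:lincvg_local}, exactly as you do. What you have written is essentially a self-contained reconstruction of the cited result: you obtain the one-cycle contraction \ref{t:lincvg_ine} from Lemma~\ref{l:composition} combined with quasi coercivity (via the $1$-Lipschitz shift $d_{C_i}(x_0)\leq \frac{1}{\nu}\sum_{j\le i}\|x_{j-1}-x_j\|$, where $\nu\le 1$ is used) and $\kappa$-linear regularity, with the anchor $P_C(x_0)\in C\cap\ball{w}{\delta}$ justified by $2\delta_0\le\delta$; and you then convert it into $R$-linear convergence by the step estimate $\|x_{mn+k}-x_{mn}\|\le(\Gamma+1)d_C(x_{mn})$, a Cauchy/telescoping argument, and an induction that closes precisely with the radius $\frac{\delta_0(1-\rho)}{2+\Gamma-\rho}$. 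All the domain bookkeeping (intermediate iterates in $\ball{w}{\delta/2}$, anchors in $\ball{w}{\delta}$) checks out, and your part \ref{t:lincvg_global} coincides with the paper's argument. The trade-off is clear: the paper's proof buys brevity by outsourcing the quantitative core to \cite{DP16}, while your version makes the theorem independent of that reference at the cost of the length of the estimates; the only cosmetic caveat is the degenerate case $\rho=0$, where ``rate $\rho^{1/m}$'' should be read as eventual finite termination (your remark that $x_\ell=x_m\in C$ for $\ell\ge m$ already covers this).
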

\begin{proof}
\ref{t:lincvg_ine}\&\ref{t:lincvg_local}: This follows from \cite[Theorem~4.5]{DP16}.

\ref{t:lincvg_global}: Assume that $\gamma_i= 1$ for every $i\in I$. Then $\Gamma= 1$, $\delta_0= \delta/2$, and
\begin{equation}
\rho= \bigg[1-\frac{\nu^2}{\kappa^2}\Big(\sum_{i\in I}\frac{1}{\beta_i}\Big)^{-1}\bigg]_+^{1/2m}< 1.
\end{equation} 
Let $x_0\in \ball{w}{\delta/2}$. Since for every $i\in I$, $T_i$ is $(C_i\cap \ball{w}{\delta}, 1, \beta_i)$-quasi firmly Fej\'er monotone and hence $(C_i\cap \ball{w}{\delta}, 1)$-quasi Fej\'er monotone on $\ball{w}{\delta/2}$, we obtain from \cite[Lemma~3.4(ii)]{DP16} that $(x_n)_\nnn\subset \ball{w}{\delta/2}$. Now apply \ref{t:lincvg_local}.
\end{proof}

From now on, let $\ell$ be a positive integer and set $J:= \{1, \dots, \ell\}$. For every $j \in J$, let
\begin{subequations}\label{e:parameters}	
\begin{align}
&\lambda_j, \mu_j\in \left]0, 2\right],\ 
\hbeta_j :=\Big(\frac{\lambda_j}{2-\lambda_j} +\frac{\mu_j}{2-\mu_j}\Big)^{-1},\ 
\alpha_j\in \big]0, 1+\hbeta_j\big[, \text{~and} 
\label{e:parameters_lm&mu}\\
&s_j, t_j\in I\quad\text{such that~} s_j\neq t_j \text{~and~}
\{s_j\}_{j \in J}\cup \{t_j\}_{j\in J} =I.
\label{e:parameters_I&J}
\end{align}
\end{subequations}
Setting 
\begin{equation}
\label{e:gDR}
\forall j \in J,\quad T_j :=(1 -\alpha_j)\Id +\alpha_j P_{C_{t_j}}^{\mu_j}P_{C_{s_j}}^{\lambda_j},
\end{equation}
we study the \emph{cyclic generalized Douglas--Rachford algorithm} defined by $(T_j)_{j\in J}$, which includes several algorithms in the literature, for example, the \emph{cyclically anchored DR algorithm} \cite{BNP15} and the \emph{cyclic DR algorithm} \cite{BT14}; see \cite[Section~5.3]{DP16} for more details. We also say that the cyclic gDR algorithm is \emph{connected} if for every $i, k\in I$, there exists a path
\begin{equation}
\label{e:connected}
\{(i_1,i_2),(i_2,i_3),\ldots,(i_{q-1},i_{q})\}
\subseteq \{(s_j,t_j),(t_j,s_j)\}_{j\in J}
\quad\text{such that $i_1=i$, $i_q =k$};
\end{equation}
in other words, $I=\{1,\ldots,m\}$ and $\{(s_j,t_j)\}_{j\in J}$ respectively represent the vertices and edges of a \emph{connected undirected graph}. Here, a graph is undirected if every edge is bidirectional, and is connected if every two vertices can be linked by some path composed by the edges. 
It is clear that the cyclically anchored DR algorithm and the cyclic DR algorithm are connected.

Next, for every $j\in J$, we define
\begin{equation}
\label{e:LjZj}
L_j :=\aff(C_{s_j}\cup C_{t_j}) \quad\text{and}\quad
Z_j :=\begin{cases}
(C_{s_j}\cap C_{t_j})+ (L_j-L_j)^\perp &\text{~if~} \lambda_j =\mu_j =2, \\
C_{s_j}\cap C_{t_j} &\text{~otherwise}
\end{cases}
\end{equation}
and note from Lemma~\ref{l:Fix}\ref{l:Fix_inclusion} that $Z_j\subseteq\Fix T_j$. A relationship between $\{Z_j\}_{j\in J}$ and $\{C_i\}_{i\in I}$ is given as follows.

\begin{lemma}[$\{Z_j\}_{j\in J}$ vs. $\{C_i\}_{i\in I}$]
\label{l:ZjCi}
Let $w\in\bigcap_{i\in I}C_i$. Suppose that $\{C_{s_j},C_{t_j}\}$ is strongly regular at $w$ whenever $\lambda_j=\mu_j=2$. Then
\begin{equation}
\bigcap_{j\in J}Z_j=\bigcap_{i\in I}C_i.
\end{equation}
If, in addition, $\{C_i\}_{i\in I}$ is $\kappa$-linearly regular around $w$, then so is $\{Z_j\}_{j\in J}$.
\end{lemma}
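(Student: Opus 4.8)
The plan is to prove the two claims in turn, exploiting the structure of the sets $Z_j$ from \eqref{e:LjZj} together with Lemma~\ref{l:aff}\ref{l:aff_proj} and Lemma~\ref{l:Fix}.

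First I would establish the set equality $\bigcap_{j\in J}Z_j=\bigcap_{i\in I}C_i$. The inclusion $\supseteq$ is immediate: if $x\in\bigcap_{i\in I}C_i$ then for every $j\in J$ we have $x\in C_{s_j}\cap C_{t_j}\subseteq Z_j$, regardless of which case of \eqref{e:LjZj} applies. For the reverse inclusion $\subseteq$, fix $x\in\bigcap_{j\in J}Z_j$ and fix $i\in I$. Since $\{s_j\}_{j\in J}\cup\{t_j\}_{j\in J}=I$ by \eqref{e:parameters_I&J}, there is some $j$ with $i\in\{s_j,t_j\}$. If $\lambda_j\neq 2$ or $\mu_j\neq 2$, then $Z_j=C_{s_j}\cap C_{t_j}\subseteq C_i$ and we are done. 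The delicate case is $\lambda_j=\mu_j=2$, where $Z_j=(C_{s_j}\cap C_{t_j})+(L_j-L_j)^\perp$ with $L_j=\aff(C_{s_j}\cup C_{t_j})$; here I must show $Z_j\subseteq C_i$, i.e. that the only way a translate $c+u$ with $c\in C_{s_j}\cap C_{t_j}$ and $u\in(L_j-L_j)^\perp$ can lie in $C_{s_j}\cap C_{t_j}$ is when $u=0$. This is exactly where the hypothesis that $\{C_{s_j},C_{t_j}\}$ is strongly regular at $w$ enters. By Proposition~\ref{p:Lstr} (applied with $L=L_j$, noting that strong regularity at $w$ within $L_j$ is the content of $L_j$-regularity), strong regularity forces $L_j=X$ (this is the final assertion of Proposition~\ref{p:Lstr}, once we know $L_j\supseteq C_{s_j}\cup C_{t_j}$ and the system is $L_j$-regular — which holds since $X$-strong regularity implies $L_j$-regularity for any affine subspace containing the sets), hence $(L_j-L_j)^\perp=\{0\}$ and $Z_j=C_{s_j}\cap C_{t_j}\subseteq C_i$. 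Therefore $x\in C_i$; since $i$ was arbitrary, $x\in\bigcap_{i\in I}C_i$.

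For the second claim, suppose in addition that $\{C_i\}_{i\in I}$ is $\kappa$-linearly regular around $w$, say on $\ball{w}{\delta}$ for some $\delta\in\RPP$: for all $x\in\ball{w}{\delta}$, $d_C(x)\leq\kappa\max_{i\in I}d_{C_i}(x)$ where $C=\bigcap_{i\in I}C_i=\bigcap_{j\in J}Z_j$. I want to deduce $d_C(x)\leq\kappa'\max_{j\in J}d_{Z_j}(x)$ on a (possibly smaller) ball. The key point is to compare $d_{Z_j}$ with $d_{C_{s_j}}$ and $d_{C_{t_j}}$. In the generic case $\lambda_j\neq 2$ or $\mu_j\neq 2$ we have $Z_j=C_{s_j}\cap C_{t_j}$, so trivially $d_{C_{s_j}}(x)\leq d_{Z_j}(x)$ and $d_{C_{t_j}}(x)\leq d_{Z_j}(x)$. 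In the case $\lambda_j=\mu_j=2$, the discussion above shows $L_j=X$ and hence again $Z_j=C_{s_j}\cap C_{t_j}$, so the same inequalities hold. Consequently, for every $i\in I$, picking any $j$ with $i\in\{s_j,t_j\}$, we get $d_{C_i}(x)\leq d_{Z_j}(x)\leq\max_{k\in J}d_{Z_k}(x)$, so $\max_{i\in I}d_{C_i}(x)\leq\max_{j\in J}d_{Z_j}(x)$. Combining with the linear regularity of $\{C_i\}_{i\in I}$ yields, for all $x\in\ball{w}{\delta}$,
\begin{equation}
d_C(x)\leq\kappa\max_{i\in I}d_{C_i}(x)\leq\kappa\max_{j\in J}d_{Z_j}(x),
\end{equation}
i.e. $\{Z_j\}_{j\in J}$ is $\kappa$-linearly regular on $\ball{w}{\delta}$, hence around $w$ with the same modulus $\kappa$.

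The main obstacle is the $\lambda_j=\mu_j=2$ case, where a priori $Z_j$ is a genuinely larger set than $C_{s_j}\cap C_{t_j}$ (it contains the whole orthogonal complement of $L_j-L_j$ translated to each common point), and where both the set equality and the linear-regularity transfer could fail. The strong-regularity hypothesis is precisely what collapses this: it forces $L_j=X$, so $(L_j-L_j)^\perp=\{0\}$ and $Z_j$ degenerates back to $C_{s_j}\cap C_{t_j}$. Once this is observed, both parts of the lemma reduce to elementary manipulations with distance functions. I should be careful to invoke Proposition~\ref{p:Lstr} correctly — strictly, one argues that $X$-strong regularity of $\{C_{s_j},C_{t_j}\}$ at $w$ is a special instance of $L$-regularity with $L=X\supseteq C_{s_j}\cup C_{t_j}$, and then the concluding sentence of Proposition~\ref{p:Lstr} gives $X=\aff(C_{s_j}\cup C_{t_j})=L_j$, which is what we need.
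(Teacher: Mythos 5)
Your proposal is correct and follows essentially the same route as the paper: strong regularity forces $L_j=\aff(C_{s_j}\cup C_{t_j})=X$ (the concluding assertion of Proposition~\ref{p:Lstr} applied with $L=X$), so $Z_j$ collapses to $C_{s_j}\cap C_{t_j}$, after which both the set equality and the transfer of $\kappa$-linear regularity follow from \eqref{e:parameters_I&J} and the inequality $\max_{i\in I}d_{C_i}\leq\max_{j\in J}d_{Z_j}$, exactly as in the paper. Only your first parenthetical (suggesting an application with $L=L_j$) is imprecise, but your closing remark already gives the correct invocation with $L=X$.
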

\begin{proof}
By assumption and Proposition~\ref{p:Lstr}, $L_j =X$ whenever $\lambda_j =\mu_j =2$. Thus, \eqref{e:LjZj} implies that $Z_j =C_{s_j}\cap C_{t_j}$ for every $i\in J$. So it follows from \eqref{e:parameters_I&J} that
\begin{equation}\label{e:ZjCi}
\bigcap_{j\in J}Z_j =\bigcap_{j\in J}(C_{s_j}\cap C_{t_j}) =\bigcap_{i\in I}C_i.
\end{equation} 
Next, we note that
\begin{equation}
\forall j\in J,\ \forall x \in X,\quad \max\{d_{C_{s_j}}(x), d_{C_{t_j}}(x)\} \leq d_{Z_j}(x).
\end{equation}
Taking the maximum over all $j\in J$ and using \eqref{e:parameters_I&J} yield
\begin{equation}\label{e:maxC<maxZ}
\forall x \in X,\quad \max_{i\in I} d_{C_i}(x) \leq \max_{j\in J} d_{Z_j}(x).  
\end{equation}
Now suppose in addition that $\{C_i\}_{i\in I}$ is $\kappa$-linearly regular around $w$.
Then combining with \eqref{e:ZjCi} and \eqref{e:maxC<maxZ}, we deduce that $\{Z_j\}_{j\in J}$ is also $\kappa$-linearly regular around $w$.
\end{proof}

\begin{lemma}[shadows of common fixed points]
\label{l:cfixedpoints}
Suppose that the cyclic gDR algorithm is connected. Then
\begin{equation}
\forall\ox\in \bigcap_{j\in J} Z_j,\quad P_{C_1}\ox= \cdots= P_{C_m}\ox= P_C\ox\in C.
\end{equation}
\end{lemma}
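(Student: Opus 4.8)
The plan is to show, in three steps, that $P_{C_1}\ox=\cdots=P_{C_m}\ox$ (each a single point) and that this common value is $P_C\ox\in C$: first that across every edge $j$ one has $P_{C_{s_j}}\ox=P_{C_{t_j}}\ox\in C_{s_j}\cap C_{t_j}$; then that connectedness propagates this equality to all indices and places the common point in $C$; and finally that this point realizes $d_C(\ox)$.

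For the first step, fix $j\in J$ and write $A:=C_{s_j}$, $B:=C_{t_j}$, $L:=L_j=\aff(A\cup B)$. Since $C\subseteq A\cap B$, we have $A\cap B\neq\varnothing$, so Lemma~\ref{l:Fix} applies. If $\lambda_j,\mu_j$ are not both $2$, then $Z_j=A\cap B$ by \eqref{e:LjZj}, hence $\ox\in A\cap B$ and $P_A\ox=P_B\ox=\{\ox\}$. If $\lambda_j=\mu_j=2$, then $Z_j=(A\cap B)+(L-L)^\perp$, so $\ox=c+u$ for some $c\in A\cap B$ and $u\in(L-L)^\perp$, and Lemma~\ref{l:Fix}\ref{l:Fix_projection} gives $P_A\ox=P_B\ox=\{c\}\subseteq A\cap B$. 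Either way, $P_{C_{s_j}}\ox=P_{C_{t_j}}\ox$ is a single point, call it $z_j\in C_{s_j}\cap C_{t_j}$. For the second step, given $i,k\in I$ I would choose a path as in \eqref{e:connected}; along each of its links the first step forces the two endpoint projectors to agree at $\ox$, so chaining along the path yields $P_{C_i}\ox=P_{C_k}\ox$. Hence $P_{C_1}\ox=\cdots=P_{C_m}\ox$ equals a single point $z$, which must coincide with every $z_j$. Since $\{s_j\}_{j\in J}\cup\{t_j\}_{j\in J}=I$, it follows that $z\in C_{s_j}\cap C_{t_j}$ for all $j$, and therefore $z\in\bigcap_{i\in I}C_i=C$.

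It remains to identify $z$ with $P_C\ox$. From $z\in P_{C_1}\ox$ we get $\|\ox-z\|=d_{C_1}(\ox)$, so every $c\in C\subseteq C_1$ satisfies $\|\ox-c\|\geq d_{C_1}(\ox)=\|\ox-z\|$; combined with $z\in C$, this forces $\|\ox-z\|=d_C(\ox)$, i.e., $z\in P_C\ox$. Moreover, any $c'\in P_C\ox$ satisfies $\|\ox-c'\|=d_C(\ox)=d_{C_1}(\ox)$, hence $c'\in P_{C_1}\ox=\{z\}$, so $P_C\ox=\{z\}$, which is the assertion. I expect the only real work to be the two-case bookkeeping around the definition of $Z_j$ together with the correct use of Lemma~\ref{l:Fix}\ref{l:Fix_projection}; connectedness is the genuinely essential hypothesis, because two distinct edges with $\lambda_j=\mu_j=2$ contribute unrelated components in their respective $(L_j-L_j)^\perp$, so without a path linking the corresponding indices the projections onto different $C_i$ really do differ.
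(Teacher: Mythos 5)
Your proposal is correct and follows essentially the same route as the paper: both rest on Lemma~\ref{l:Fix}\ref{l:Fix_projection} to get $P_{C_{s_j}}\ox=P_{C_{t_j}}\ox\in C_{s_j}\cap C_{t_j}$ for each edge, then propagate equality via connectedness and \eqref{e:parameters_I&J}. Your explicit two-case split on $Z_j$ (the first case being the $u=0$ instance of that lemma) and your spelled-out identification of the common point with $P_C\ox$ are just more detailed versions of steps the paper leaves implicit.
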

\begin{proof}
Let $\ox\in \bigcap_{j\in J} Z_j$. By Lemma~\ref{l:Fix}\ref{l:Fix_projection}, $P_{C_{s_j}}\ox= P_{C_{t_j}}\ox\in C_{s_j}\cap C_{t_j}$ for every $j\in J$. Since the algorithm is connected, in view of \eqref{e:parameters_I&J} and \eqref{e:connected}, we conclude that $P_{C_1}\ox= \cdots= P_{C_m}\ox\in C$, which also implies that $P_{C_i}\ox= P_C\ox$ for every $i\in I$.
\end{proof}

We note from Propositions~\ref{p:Lstr} and \ref{p:str&CQ} that $\{C_{s_j}, C_{t_j}\}$ is affine-hull regular at $w$ if and only if the CQ-number $\theta_{C_{s_j},C_{t_j},L_j}(w,\delta)<1$ for some $\delta\in\RPP$, in which case $\{C_{s_j}, C_{t_j}\}$ is linearly regular around $w$. This perspective supports the use of our assumptions in the following.

\begin{theorem}[linear convergence under affine-hull regularity]
\label{t:cDR}
Let $w\in C:=\bigcap_{i\in I}C_i$, $\varepsilon\in \left[0, 1/3\right]$, $\delta\in \RPP$, $\kappa\in \RPP$, and $\kappa_j\in \RPP$ for every $j\in J$. Suppose that  
\begin{enumerate}[label={\rm(\alph*)}]
\item 
\label{t:cDR_lin}
$\{Z_j\}_{j\in J}$ is $\kappa$-linearly regular on $\ball{w}{\delta/2}$ and for every $j\in J$, $\{C_{s_j}, C_{t_j}\}$ is $\kappa_j$-linearly regular on $\ball{w}{\delta/2}$.	
\item 
\label{t:cDR_reg}
For every $i\in I$, $C_i$ is $(\varepsilon, \sqrt{2}\delta)$-regular at $w$.
\item 
\label{t:cDR_CQ}
For every $j\in J$, the CQ-number $\theta_j :=\theta_{C_{s_j},C_{t_j},L_j}(w,\sqrt{2}\delta)< 1$.
\item 
\label{t:cDR_rate}
Setting for every $j\in J$, 
\begin{subequations}
\begin{align}
\gamma_j&:=1 -\alpha_j +\alpha_j\big(1 +\tfrac{\lambda_j\varepsilon}{1 -\varepsilon}\big)\big(1 +\tfrac{\mu_j\varepsilon}{1 -\varepsilon}\big),\\
\nu_j&:=\tfrac{\alpha_j\sqrt{1 -\theta_j^2}}{\kappa}\min\Big\{\lambda_j, \tfrac{2\mu_j}{|1-\mu_j|+\sqrt{(1-\mu_j)^2+4\mu_j(1-\theta_j^2)}}\Big\},\quad\text{and}\\
\nu'_j &:=\begin{cases}
\nu_j &\text{~if~} \lambda_j =\mu_j =2 \text{~or~} L_j=X, \\
\min\{\nu_j, \alpha_j(\lambda_j +\mu_j -\lambda_j\mu_j)\} &\text{~otherwise},
\end{cases}
\end{align}
\end{subequations}
it holds that
\begin{equation}
\rho :=\left[\Gamma^2 -\frac{\nu^2}{\kappa^2} \Big(\sum_{j\in J} \frac{\alpha_j}{1-\alpha_j+\hbeta_j}\Big)^{-1}\right]_+^{\frac{1}{2\ell}}< 1,
\end{equation}
where $\Gamma :=(\gamma_1\cdots \gamma_\ell)^{1/2}$ 
and $\nu :=\min\limits_{j\in J} \{\nu'_j, 1\}$.
\end{enumerate}	
Then if either $(x_{\ell n})_\nnn\subset\ball{w}{\delta_0}$ or $x_0\in\ball{w}{\frac{\delta_0(1-\rho)}{2+\Gamma-\rho}}$, where $\delta_0 :=\frac{\delta}{2\Gamma}\gamma_{\ell}^{1/2}$, the cyclic sequence $(x_n)_\nnn$ generated by $(T_j)_{j\in J}$ converges $R$-linearly with rate $\rho$ to a point 
\begin{equation}\label{e:cDR-1}
\ox\in \bigcap_{j\in J} Z_j\subseteq \bigcap_{j\in J} \Fix T_j.
\end{equation} 
Additionally, if the cyclic gDR algorithm is connected, then $P_{C_1}\ox =\cdots =P_{C_m}\ox\in C$.
\end{theorem}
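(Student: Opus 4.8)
The plan is to deduce everything from Theorem~\ref{t:lincvg}, applied with the system $\{Z_j\}_{j\in J}$ in place of $\{C_i\}_{i\in I}$, the operators $(T_j)_{j\in J}$, index set $J$ (so that ``$m$'' there is $\ell$), regularity modulus $\kappa$, the $\gamma_j$ of item~\ref{t:cDR_rate}, $\beta_j:=\tfrac{1-\alpha_j+\hbeta_j}{\alpha_j}$, and the common constant $\nu:=\min_{j\in J}\{\nu'_j,1\}$. Two preliminary observations: since $w\in\bigcap_{i\in I}C_i$ we have $w\in C_{s_j}\cap C_{t_j}\subseteq Z_j$ for each $j$, so $w\in\bigcap_{j\in J}Z_j\neq\varnothing$; and $\alpha_j\in\big]0,1+\hbeta_j\big[$ forces $\beta_j\in\RPP$, while $\theta_j<1$ and $\alpha_j>0$ force $\nu_j>0$, hence $\nu\in\left]0,1\right]$. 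Hypothesis~(a) of Theorem~\ref{t:lincvg} for $\{Z_j\}_{j\in J}$ is the first part of assumption~\ref{t:cDR_lin}, so it suffices to verify hypothesis~(b): for every $j\in J$, $T_j$ is $(Z_j\cap\ball{w}{\delta},\gamma_j,\beta_j)$-quasi firmly Fej\'er monotone and $(Z_j,\nu)$-quasi coercive on $\ball{w}{\delta/2}$.

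For the quasi firm Fej\'er monotonicity I would apply Proposition~\ref{p:qfF-DR} to the pair $(C_{s_j},C_{t_j})$ with $\varepsilon_1=\varepsilon_2=\varepsilon$ and $L=L_j$: by assumption~\ref{t:cDR_reg}, $C_{s_j}$ is $(\varepsilon,\sqrt2\delta)$- hence $(\varepsilon,\delta)$-regular at $w$ and $C_{t_j}$ is $(\varepsilon,\sqrt2\delta)$-regular at $w$, and $\alpha_j\in\big]0,1+\hbeta_j\big]$. If $\lambda_j\neq2$ or $\mu_j\neq2$, part~\ref{p:qfF-DR_X} yields the $(C_{s_j}\cap C_{t_j}\cap\ball{w}{\delta},\gamma_j,\beta_j)$-quasi firm Fej\'er monotonicity of $T_j$ on $\ball{w}{\delta/2}$, and here $Z_j=C_{s_j}\cap C_{t_j}$. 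If $\lambda_j=\mu_j=2$, part~\ref{p:qfF-DR_22} yields the same property relative to $(C_{s_j}\cap C_{t_j}\cap\ball{w}{\delta})+(L_j-L_j)^\perp$; since $w\in L_j$, any $z\in Z_j\cap\ball{w}{\delta}$ splits as $z=c+u$ with $c\in C_{s_j}\cap C_{t_j}$ and $u\in(L_j-L_j)^\perp$ with $\|c-w\|\le\|z-w\|\le\delta$, so $Z_j\cap\ball{w}{\delta}$ is contained in that larger set and the property passes to it. In both cases $T_j$ is $(Z_j\cap\ball{w}{\delta},\gamma_j,\beta_j)$-quasi firmly Fej\'er monotone on $\ball{w}{\delta/2}$.

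For the quasi coercivity I would apply Proposition~\ref{p:coer-DR} to $(C_{s_j},C_{t_j})$: assumption~\ref{t:cDR_reg} supplies the $(\varepsilon,\delta)$-regularity of $C_{s_j}$, the second part of assumption~\ref{t:cDR_lin} supplies $\kappa_j$-linear regularity of $\{C_{s_j},C_{t_j}\}$ on $\ball{w}{\delta/2}$, and assumption~\ref{t:cDR_CQ} supplies $\theta_j<1$ with $L_j=\aff(C_{s_j}\cup C_{t_j})$. Part~\ref{p:coer-DR_gen} gives $((C_{s_j}\cap C_{t_j})+(L_j-L_j)^\perp,\nu_j)$-quasi coercivity of $T_j$ on $\ball{w}{\delta/2}$, which settles the cases $\lambda_j=\mu_j=2$ and $L_j=X$ (where $Z_j=(C_{s_j}\cap C_{t_j})+(L_j-L_j)^\perp$ and $\nu'_j=\nu_j$); when $\min\{\lambda_j,\mu_j\}<2$ and $L_j\neq X$, part~\ref{p:coer-DR_less} refines this to $(C_{s_j}\cap C_{t_j},\nu'_j)$-quasi coercivity of $T_j$ on $\ball{w}{\delta/2}$, and then $Z_j=C_{s_j}\cap C_{t_j}$. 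Thus $T_j$ is $(Z_j,\nu'_j)$- and a fortiori $(Z_j,\nu)$-quasi coercive on $\ball{w}{\delta/2}$.

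Hypothesis~(b) of Theorem~\ref{t:lincvg} being verified, parts~\ref{t:lincvg_ine}--\ref{t:lincvg_local} apply. Using $\big(\sum_{j\in J}\tfrac1{\beta_j}\big)^{-1}=\big(\sum_{j\in J}\tfrac{\alpha_j}{1-\alpha_j+\hbeta_j}\big)^{-1}$, the full-cycle contraction factor it produces is $[\Gamma^2-\tfrac{\nu^2}{\kappa^2}(\sum_{j\in J}\tfrac{\alpha_j}{1-\alpha_j+\hbeta_j})^{-1}]_+^{1/2}$ and the $R$-linear rate of $(x_n)_\nnn$ is its $\ell$-th root, namely the $\rho$ of item~\ref{t:cDR_rate}; since $t\mapsto\tfrac{1-t}{2+\Gamma-t}$ is decreasing on $[0,1)$ and $\rho$ exceeds that full-cycle factor, the starting-point condition stated here is no weaker than the one in Theorem~\ref{t:lincvg}\ref{t:lincvg_local}, so the hypotheses transfer. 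Hence $(x_n)_\nnn$ converges $R$-linearly with rate $\rho$ to a point $\ox\in\bigcap_{j\in J}Z_j$, and $\bigcap_{j\in J}Z_j\subseteq\bigcap_{j\in J}\Fix T_j$ by Lemma~\ref{l:Fix}\ref{l:Fix_inclusion}, which gives~\eqref{e:cDR-1}. Finally, if the cyclic gDR algorithm is connected, Lemma~\ref{l:cfixedpoints} applied to $\ox$ yields $P_{C_1}\ox=\cdots=P_{C_m}\ox\in C$. The main obstacle is the case bookkeeping between $\lambda_j=\mu_j=2$ and the rest: making sure the summand $(L_j-L_j)^\perp$ in $Z_j$ is matched with the exact ``$C$-set'' delivered by Propositions~\ref{p:qfF-DR} and~\ref{p:coer-DR} so that both quasi firm Fej\'er monotonicity and quasi coercivity end up stated relative to $Z_j$; the rest is a direct instantiation of the machinery of Sections~\ref{s:qfFm}--\ref{s:qcoer}.
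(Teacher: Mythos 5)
Your proposal is correct and follows essentially the same route as the paper's proof: Proposition~\ref{p:coer-DR} for the $(Z_j,\nu'_j)$-quasi coercivity, Proposition~\ref{p:qfF-DR}\ref{p:qfF-DR_X}--\ref{p:qfF-DR_22} together with the inclusion $\big(C_{s_j}\cap C_{t_j}+(L_j-L_j)^\perp\big)\cap\ball{w}{\delta}\subseteq\big(C_{s_j}\cap C_{t_j}\cap\ball{w}{\delta}\big)+(L_j-L_j)^\perp$ for the quasi firm Fej\'er monotonicity relative to $Z_j\cap\ball{w}{\delta}$, then Theorem~\ref{t:lincvg}\ref{t:lincvg_local} applied to $(T_j)_{j\in J}$ with the sets $(Z_j)_{j\in J}$, and finally Lemma~\ref{l:cfixedpoints}. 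The only difference is that you spell out the case bookkeeping, the positivity of $\beta_j$ and $\nu$, and the rate/radius conversion between the per-cycle and per-iteration formulations, which the paper leaves implicit.
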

\begin{proof} 
Let $j \in J$. We have that $w\in C_{s_j}\cap C_{t_j}$ and, by Proposition~\ref{p:coer-DR}, that $T_j$ is $(Z_j, \nu'_j)$- and therefore $(Z_j, \nu)$-quasi coercive on $\ball{w}{\delta/2}$. 
Noting that 
\begin{equation}
\big(C_{s_j}\cap C_{t_j}+ (L_j-L_j)^\perp\big)\cap \ball{w}{\delta}\subseteq \big(C_{s_j}\cap C_{t_j}\cap \ball{w}{\delta}\big)+ (L_j-L_j)^\perp
\end{equation}
and using Proposition~\ref{p:qfF-DR}\ref{p:qfF-DR_X}--\ref{p:qfF-DR_22}, we derive that $T_j$ is $(Z_j\cap\ball{w}{\delta}, \gamma_j, \frac{1-\alpha_j+\hbeta_j}{\alpha_j})$-quasi firmly Fej\'er monotone on $\ball{w}{\delta/2}$. 
Thus, applying Theorem~\ref{t:lincvg}\ref{t:lincvg_local} to $(T_j)_{j\in J}$ and the corresponding sets $(Z_j)_{j\in J}$, we obtain $R$-linear convergence of the cyclic sequence $(x_n)_\nnn$ to a point $\ox$ satisfying \eqref{e:cDR-1}.
Now Lemma~\ref{l:cfixedpoints} completes the proof.
\end{proof}

\begin{remark}[sharper convergence rate]
\label{r:better_rate}
Theorem~\ref{t:cDR} indeed generalizes \cite[Theorem~5.21]{DP16}, and moreover, provides sharper convergence rate under the \emph{same} assumptions. To see this, let $w\in\bigcap_{i\in I}C_i$ and suppose all assumptions \cite[Theorem~5.21]{DP16} are fulfilled. By Proposition~\ref{p:Lstr}, Proposition~\ref{p:str&CQ}, and Lemma~\ref{l:ZjCi}, all assumptions in Theorem~\ref{t:cDR} are also satisfied on some neighborhood of $w$. It can be seen that the linear convergence rate $\rho$ in Theorem~\ref{t:cDR} is smaller than the one obtained in the proof of \cite[Theorem~5.21]{DP16} because its corresponding quasi coercivity constant $\nu$ is better (see Remark~\ref{r:imprv_qcoer}).
\end{remark}

\begin{remark}[shadows of common fixed points]
As shown in Theorem~\ref{t:cDR}, the cyclic gDR algorithm converges (locally) to the set of common fixed points. However, without \emph{additional} conditions, one shoud not expect the limit points or their projections (or ``shadows'') onto $C_i$'s to lie in the intersection $C :=\bigcap_{i\in I}C_i$, which means that those points might not solve the feasibility problem! We will illustrate this phenomenon in the following example.

Suppose that $C_1 =\RP\times\RR\times\{0\}$, $C_2 =\RR\times\RP\times\{0\}$, $C_3 =\menge{(\xi, \zeta ,\zeta)}{\xi, \zeta\in \RR}$, and $C_4 =\RP\times\RR^2$ in $X =\RR^3$. Consider $J =\{1,2\}$, $\lambda_1 =\mu_1 =2$, $\min\{\lambda_2,\mu_2\} <2$, $(s_1,t_1) =(1,2)$, and $(s_2,t_2) =(3,4)$. So $T_1:=T_{2,2}^{\alpha_1}$ and $T_2:=T_{\lambda_2,\mu_2}^{\alpha_2}$ are the gDR operators for $(C_1,C_2)$ and $(C_3,C_4)$, respectively. Then 
\begin{subequations}
\begin{align}
&L_1=\aff(C_1\cup C_2)=\RR^2\times\{0\},
&&\ Z_1=(C_1\cap C_2)+(L_1-L_1)^\perp=\RP^2\times\RR,\\
&L_2=\aff(C_3\cup C_4)=\RR^3,
&&\ Z_2=(C_3\cap C_4)=\menge{(\xi, \zeta, \zeta)}{\xi, \zeta\in \RP}.
\end{align}
\end{subequations}
Now take $\ox=(1,1,1)\in Z_1\cap Z_2\subseteq\Fix T_1\cap\Fix T_2$. Then $P_{C_1}\ox= P_{C_2}\ox=(1,1,0)\neq (1,1,1)=P_{C_3}\ox=P_{C_4}\ox$.
So these projections are not identical and neither of them lies in the intersection $C_1\cap C_2\cap C_3\cap C_4=\RP\times\{0\}\times\{0\}$.
\end{remark}

\begin{remark}[on linear regularity moduli]
To the best of our knowledge, there is no known results on the relationship between the linear regularity modulus $\kappa$ of the entire system $\{C_i\}_{i\in I}$ and those of its subsystems. So we present here two simple examples showing that one modulus can be arbitrarily large while others remain bounded.

We will need the following formula whose proof is elementary: For two intersecting hyperplanes $A$ and $B$ of $X$ with two nonparallel unit normal vectors $n_A$ and $n_B$, the system $\{A,B\}$ is linearly regular on $X$ with modulus
\begin{equation}\label{e:kappa}
\sqrt{\frac{2}{1-|\scal{n_A}{n_B}|}}.
\end{equation}
\begin{enumerate}
\item {\bf Arbitrarily large linear regularity modulus for subsystem}:
Let $\varepsilon\in\RPP$ and suppose that $C_1 =\RR\times\{0\}$, $C_2 =\menge{\xi(1, \varepsilon)}{\xi\in \RR}$, $C_3 =\{0\}\times\RR$ are lines in $X =\RR^2$. One can check that $\bigcap_{i=1}^3 C_i =\{(0,0)\}$ and that $\{C_i\}_{i\in\{1,2,3\}}$ is $\kappa$-linearly regular on $X$ with $\kappa=\sqrt{2}$. As noticed, $\{C_1,C_2\}$ is linearly regular on $X$. Let $\kappa'$ be a linear regularity modulus of $\{C_1,C_2\}$ around $w =(0, 0)$ and take $x =(\varepsilon, \varepsilon^2)\in C_2$. Then, as $\varepsilon$ is sufficiently small, 
$\sqrt{\varepsilon^2 +\varepsilon^4} =d_{C_1\cap C_2}(x)\leq \kappa'\max\{d_{C_1}(x),d_{C_2}(x)\} =\kappa'\varepsilon^2$. We deduce that $\kappa'\geq \sqrt{1+1/\varepsilon^2}$ and so $\kappa'$ can be arbitrarily large while $\kappa$ remains constant.

\item {\bf Arbitrarily large linear regularity modulus for entire system}:
Let $\varepsilon\in\RPP$ and suppose that $X =\RR^3$. Consider the planes $C_1 =\RR^2\times\{0\}$, $C_2 =\{0\}\times\RR^2$, $C_3$ being the plane defined by $\{(0,0,0),(\varepsilon,1,0),(0,1,\varepsilon)\}$, and let $w =(0,0,0)\in C:=C_1\cap C_2\cap C_3=\{(0,0,0)\}$. We see that $C_1$, $C_2$, and $C_3$ respectively have unit normal vectors 
\begin{equation}
n_1=(0,0,1),\
n_2=(1,0,0),\text{~and~}
n_3=\tfrac{1}{\sqrt{2+\varepsilon^2}}(1,-\varepsilon,1).
\end{equation}
So $\{C_i,C_j\}$ is $\kappa_{i,j}$-linearly regular on $X$, where $\kappa_{i,j}$ is computed by \eqref{e:kappa} as
$\kappa_{1,2}=\sqrt{2}$ and
$\kappa_{1,3}=\kappa_{2,3}
=\sqrt{2}/\sqrt{1-1/\sqrt{2+\varepsilon^2}}\in\left]\sqrt{2},2\right[$. Now assume that $\{C_1,C_2,C_3\}$ is $\kappa$-linearly regular around $w$ for some $\kappa\in\RP$ and let $x =(\varepsilon^2, \varepsilon, 0)\in C_1\cap C_3$. Then, as $\varepsilon$ is sufficiently small,
\begin{equation}
\sqrt{\varepsilon^2+\varepsilon^4} =d_C(x)\leq \kappa\max_{i\in\{1,2,3\}}d_{C_i}(x) =\kappa\varepsilon^2,
\end{equation}
which yields $\kappa\geq \sqrt{1+1/\varepsilon^2}$. Hence, $\kappa$ can be arbitrarily large while $\kappa_{i,j}$ remains bounded.
\end{enumerate}
\end{remark}

We note from Proposition~\ref{p:str&CQ}\ref{p:str&CQ_str}--\ref{p:str&CQ_CQ} that assumption~\ref{t:cDR_CQ} in Theorem~\ref{t:cDR} means that for every $j\in J$, $\{C_{s_j}, C_{t_j}\}$ is affine-hull regular ar $w$. Nevertheless, in the following linear convergence result, we only require linear regularity for pairs $\{C_{s_j}, C_{t_j}\}$ corresponding to $\min\{\lambda_j, \mu_j\}< 2$.   
\begin{theorem}[linear convergence under linear regularity]
\label{t:super}
Let $w\in C:=\bigcap_{i\in I}C_i$. 
Suppose that $\{Z_j\}_{j\in J}$ is linearly regular around $w$, that $\{C_i\}_{i\in I}$ is superregular at $w$, and that for every $j\in J$, $\{C_{s_j}, C_{t_j}\}$ is linearly regular around $w$ if $\min\{\lambda_j, \mu_j\}< 2$ and affine-hull regular at $w$ otherwise.
Then the cyclic gDR algorithm converges $R$-linearly locally to a point 
\begin{equation}
\ox\in \bigcap_{j\in J} Z_j\subseteq \bigcap_{j\in J} \Fix T_j.
\end{equation} 
Moreover, $P_{C_1}\ox =\cdots =P_{C_m}\ox\in C$ provided that the cyclic gDR algorithm is connected.
\end{theorem}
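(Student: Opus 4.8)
The plan is to deduce the theorem from Theorem~\ref{t:lincvg} applied to the operators $(T_j)_{j\in J}$ together with the sets $(Z_j)_{j\in J}$, in exactly the same way as in the proof of Theorem~\ref{t:cDR}. The only substantive change is the source of quasi coercivity: instead of Proposition~\ref{p:coer-DR}, I will use Proposition~\ref{p:coer-DR-lin} for the indices $j$ with $\min\{\lambda_j,\mu_j\}<2$, and Corollary~\ref{c:coer-DR}\ref{c:coer-DR_gen} for the indices $j$ with $\lambda_j=\mu_j=2$.

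First I would fix the quasi coercivity data. For each $j\in J$ with $\min\{\lambda_j,\mu_j\}<2$, superregularity of $C_{s_j},C_{t_j}$ at $w$ together with linear regularity of $\{C_{s_j},C_{t_j}\}$ around $w$ gives, via Proposition~\ref{p:coer-DR-lin}, constants $\nu_j,\delta_j\in\RPP$ such that $T_j$ is $(C_{s_j}\cap C_{t_j})$-quasi coercive on $\ball{w}{\delta_j}$; since here $Z_j=C_{s_j}\cap C_{t_j}$, this is $(Z_j,\nu_j)$-quasi coercivity. For each $j$ with $\lambda_j=\mu_j=2$, affine-hull regularity of $\{C_{s_j},C_{t_j}\}$ at $w$ and superregularity of $C_{s_j}$ feed Corollary~\ref{c:coer-DR}\ref{c:coer-DR_gen}, giving $\nu_j,\delta_j\in\RPP$ such that $T_j$ is $\big((C_{s_j}\cap C_{t_j})+(L_j-L_j)^\perp\big)$-quasi coercive on $\ball{w}{\delta_j}$, and this set is precisely $Z_j$ by \eqref{e:LjZj}. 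Next, since $\{C_i\}_{i\in I}$ is superregular at $w$, for any prescribed $\varepsilon\in\left]0,1/3\right]$ there is $\delta_\varepsilon\in\RPP$ making every $C_i$ both $(\varepsilon,\delta_\varepsilon)$- and $(\varepsilon,\sqrt2\delta_\varepsilon)$-regular at $w$; using the inclusion $\big(C_{s_j}\cap C_{t_j}+(L_j-L_j)^\perp\big)\cap\ball{w}{\delta}\subseteq\big(C_{s_j}\cap C_{t_j}\cap\ball{w}{\delta}\big)+(L_j-L_j)^\perp$ and Proposition~\ref{p:qfF-DR}\ref{p:qfF-DR_X}--\ref{p:qfF-DR_22} exactly as in Theorem~\ref{t:cDR}, each $T_j$ is $\big(Z_j\cap\ball{w}{\delta_\varepsilon},\gamma_j(\varepsilon),\tfrac{1-\alpha_j+\hbeta_j}{\alpha_j}\big)$-quasi firmly Fej\'er monotone on $\ball{w}{\delta_\varepsilon/2}$, where $\gamma_j(\varepsilon)\downarrow 1$ as $\varepsilon\downarrow 0$ and $\tfrac{1-\alpha_j+\hbeta_j}{\alpha_j}>0$ because $\alpha_j<1+\hbeta_j$. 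Shrinking, put $\delta:=\min\{\delta_\varepsilon,\min_{j}\delta_j,\delta'\}$, where $\delta'$ comes from the hypothesis that $\{Z_j\}_{j\in J}$ is $\kappa$-linearly regular on $\ball{w}{\delta'/2}$, and set $\nu:=\min\{1,\min_j\nu_j\}\in\left]0,1\right]$ (legitimate since quasi coercivity persists with a smaller constant). Then all hypotheses of Theorem~\ref{t:lincvg}, with $m,I,C_i$ replaced by $\ell,J,Z_j$, hold on $\ball{w}{\delta/2}$ with moduli $\gamma_j(\varepsilon)$, $\beta_j:=\tfrac{1-\alpha_j+\hbeta_j}{\alpha_j}$ and coercivity constant $\nu$.

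It remains to pick $\varepsilon$ so small that the rate given by \eqref{e:rate} (again with $m,I$ replaced by $\ell,J$) is strictly below $1$. Writing $\Gamma(\varepsilon):=(\gamma_1(\varepsilon)\cdots\gamma_\ell(\varepsilon))^{1/2}\to 1$ as $\varepsilon\downarrow 0$, while $\tfrac{\nu^2}{\kappa^2}\big(\sum_{j\in J}\beta_j^{-1}\big)^{-1}$ is a fixed positive number, for $\varepsilon$ small enough $\Gamma(\varepsilon)^2-\tfrac{\nu^2}{\kappa^2}\big(\sum_{j\in J}\beta_j^{-1}\big)^{-1}<1$, hence $\rho<1$. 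Theorem~\ref{t:lincvg}\ref{t:lincvg_local} then yields: whenever $x_0$ lies in a suitably small ball about $w$ (or $(x_{\ell n})_\nnn\subset\ball{w}{\delta_0}$), the cyclic sequence $(x_n)_\nnn$ converges $R$-linearly to some $\ox\in\bigcap_{j\in J}Z_j$, and $\bigcap_{j\in J}Z_j\subseteq\bigcap_{j\in J}\Fix T_j$ as noted after \eqref{e:LjZj} (that is, by Lemma~\ref{l:Fix}\ref{l:Fix_inclusion}). Finally, if the algorithm is connected, Lemma~\ref{l:cfixedpoints} gives $P_{C_1}\ox=\cdots=P_{C_m}\ox=P_C\ox\in C$.

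The genuinely hard part is already discharged upstream: it is Proposition~\ref{p:coer-DR-lin}, whose proof crucially exploits the restriction $\min\{\lambda_j,\mu_j\}<2$ through the angle and circumcircle estimates of Lemmas~\ref{l:angle} and~\ref{l:circumcircle}. Granting that, the present argument is the same bookkeeping as Theorem~\ref{t:cDR}, and the only delicate points are (i) checking that the set produced by each quasi coercivity input coincides with the definition of $Z_j$ in each of the two parameter regimes, and (ii) selecting a single $\varepsilon$, hence a single radius $\delta$, that works for all $j\in J$ simultaneously and drives $\Gamma$ below the threshold needed for $\rho<1$.
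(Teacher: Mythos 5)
Your proposal is correct and follows essentially the same route as the paper's proof: quasi coercivity of each $T_j$ with respect to $Z_j$ from Proposition~\ref{p:coer-DR-lin} (when $\min\{\lambda_j,\mu_j\}<2$) and Corollary~\ref{c:coer-DR}\ref{c:coer-DR_gen} (when $\lambda_j=\mu_j=2$), quasi firm Fej\'er monotonicity from superregularity via Proposition~\ref{p:qfF-DR}, choosing $\varepsilon$ small enough that the rate in Theorem~\ref{t:lincvg}\ref{t:lincvg_local} is below $1$, and Lemma~\ref{l:cfixedpoints} for the connected case. The only difference is that you spell out the per-index case analysis and the uniformization of $\delta$ and $\nu$ more explicitly, which the paper compresses into a single sentence.
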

\begin{proof}
Combining Corollary~\ref{c:coer-DR}\ref{c:coer-DR_gen} and Proposition~\ref{p:coer-DR-lin}, there exist $\nu\in \left]0, 1\right]$ and $\delta\in \RPP$ such that, for every $j\in J$, $T_j$ is $(Z_j, \nu)$-quasi coercive on $\ball{w}{\delta/2}$. 
Let $\varepsilon \in \left]0, 1/3\right]$. Since $\{C_i\}_{i\in I}$ is superregular at $w$, we shrink $\delta$ if necessary so that $C_i$ is $(\varepsilon, \sqrt{2}\delta)$-regular at $w$ for every $i\in I$. Now let $j\in J$. Then $C_{s_j}$ and $C_{t_j}$ are respectively $(\varepsilon, \delta)$- and $(\varepsilon, \sqrt{2}\delta)$-regular at $w$. Using Proposition~\ref{p:qfF-DR}\ref{p:qfF-DR_X}--\ref{p:qfF-DR_22} and noting that 
\begin{equation}
\big(C_{s_j}\cap C_{t_j}+ (L_j-L_j)^\perp\big)\cap \ball{w}{\delta}\subseteq \big(C_{s_j}\cap C_{t_j}\cap \ball{w}{\delta}\big)+ (L_j-L_j)^\perp,
\end{equation}
we have that $T_j$ is $(Z_j\cap\ball{w}{\delta}, \gamma_j, \frac{1-\alpha_j+\hbeta_j}{\alpha_j})$-quasi firmly Fej\'er monotone on $\ball{w}{\delta/2}$, where $\gamma_j:=1 -\alpha_j +\alpha_j\big(1 +\tfrac{\lambda_j\varepsilon}{1 -\varepsilon}\big)\big(1 +\tfrac{\mu_j\varepsilon}{1 -\varepsilon}\big)$. 
Since $\gamma_j\to 1^+$ as $\varepsilon\to 0^+$, we can choose $\varepsilon$ sufficiently small so that
\begin{equation}
\rho :=\left[\gamma_1 \cdots \gamma_m -\frac{\nu^2}{\kappa^2}\big(\sum_{j\in J} \frac{\alpha_j}{1 -\alpha_j+\hbeta_j}\big)^{-1}\right]_+^{1/2}<1.
\end{equation}
Now $R$-linear convergence of the cyclic sequence $(x_n)_\nnn$ is obtained by applying Theorem~\ref{t:lincvg}\ref{t:lincvg_local} to $(T_j)_{j\in J}$ and the corresponding sets $(Z_j)_{j\in J}$. The rest then follows from Lemma~\ref{l:cfixedpoints}.
\end{proof}

In Theorem~\ref{t:super}, if $\{C_{s_j}, C_{t_j}\}$ is strongly regular instead of affine-hull regular at $w$ whenever $\lambda_j =\mu_j =2$, then the limit point $\ox\in C$. In this case, by Lemma~\ref{l:ZjCi}, the linear regularity of $\{Z_j\}_{j\in J}$ is a consequence of that of $\{C_i\}_{i\in I}$. We summarize this observation in the following corollary, which indeed extends \cite[Theorem~5.21]{DP16}.
\begin{corollary}[linear convergence to a common point]
\label{c:commonpoint}
Let $w\in C:=\bigcap_{i\in I}C_i$. 
Suppose that $\{C_i\}_{i\in I}$ is superregular at $w$ and linearly regular around $w$, and that for every $j\in J$, $\{C_{s_j}, C_{t_j}\}$ is linearly regular around $w$ if $\min\{\lambda_j, \mu_j\}< 2$ and strongly regular at $w$ otherwise.
Then the cyclic gDR algorithm converges $R$-linearly locally to a point $\ox\in C$.
\end{corollary}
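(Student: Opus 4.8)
The plan is to obtain this statement as a direct consequence of Theorem~\ref{t:super} together with Lemma~\ref{l:ZjCi}; no new estimates are required. First I would verify that all hypotheses of Theorem~\ref{t:super} are in force. Superregularity of $\{C_i\}_{i\in I}$ at $w$ is assumed outright. For each $j\in J$ with $\min\{\lambda_j,\mu_j\}<2$, linear regularity of $\{C_{s_j},C_{t_j}\}$ around $w$ is also assumed outright. Otherwise, since $\lambda_j,\mu_j\in\left]0,2\right]$, the condition $\min\{\lambda_j,\mu_j\}\geq 2$ forces $\lambda_j=\mu_j=2$, and in that case $\{C_{s_j},C_{t_j}\}$ is assumed strongly regular at $w$, which in particular makes it affine-hull regular at $w$ (strong regularity is $L$-regularity with $L=X$, so Proposition~\ref{p:Lstr}\ref{p:Lstr_sub} applied to the pair $\{C_{s_j},C_{t_j}\}$ yields its affine-hull regularity). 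Hence the pairwise regularity requirement of Theorem~\ref{t:super} is met.

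The only remaining hypothesis of Theorem~\ref{t:super} is linear regularity of $\{Z_j\}_{j\in J}$ around $w$, and this is precisely where Lemma~\ref{l:ZjCi} enters. Since $\{C_{s_j},C_{t_j}\}$ is strongly regular at $w$ whenever $\lambda_j=\mu_j=2$, and $\{C_i\}_{i\in I}$ is $\kappa$-linearly regular around $w$ by assumption, Lemma~\ref{l:ZjCi} gives simultaneously that $\{Z_j\}_{j\in J}$ is $\kappa$-linearly regular around $w$ and that
\begin{equation}
\bigcap_{j\in J}Z_j = \bigcap_{i\in I}C_i = C.
\end{equation}

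With all hypotheses verified, Theorem~\ref{t:super} yields local $R$-linear convergence of the cyclic sequence $(x_n)_\nnn$ generated by $(T_j)_{j\in J}$ to some point $\ox\in\bigcap_{j\in J}Z_j$. Invoking the displayed identity, $\ox\in C$, which is the asserted conclusion. I do not anticipate a genuine obstacle here: the only points needing a moment's care are the elementary observation that $\min\{\lambda_j,\mu_j\}\geq 2$ collapses to $\lambda_j=\mu_j=2$ under the standing range restriction, and the implication ``strongly regular $\Rightarrow$ affine-hull regular'', both immediate from the definitions and Proposition~\ref{p:Lstr}.
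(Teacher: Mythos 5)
Your proposal is correct and follows essentially the same route as the paper: strong regularity implies affine-hull regularity (so Theorem~\ref{t:super} applies), and Lemma~\ref{l:ZjCi} supplies both the linear regularity of $\{Z_j\}_{j\in J}$ and the identification $\bigcap_{j\in J}Z_j=C$, forcing the limit point into $C$. The only difference is that you spell out the verification of the hypotheses in more detail than the paper's one-line proof, which is fine.
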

\begin{proof} Since strong regularity implies affine-hull regularity, the conclusion follows from Theorem~\ref{t:super} and Lemma~\ref{l:ZjCi}.
\end{proof}

We say that the cyclic gDR algorithm is \emph{fully connected} if there exist positive integers $r, q$ such that $1\leq r\leq q\leq \ell$ and (not necessarily distinct) indices $i_1, \dots, i_q\in I$ such that $\{i_1, \dots, i_q\}= I$ and that 
\begin{equation}
\label{e:full}
\{(i_1,i_2), (i_2,i_3), \dots, (i_{r-1},i_r), (i_r, i_1)\}\cup \{(i_1,i_{r+1}), \dots, (i_1,i_q)\}\subseteq \{(s_j,t_j)\}_{j\in J}.
\end{equation}
Here we adopt the following convention. If $r= 1$, then \eqref{e:full} reads as 
\begin{equation}
\label{e:anchored}
\{(i_1,i_2), \dots, (i_1,i_q)\}\subseteq \{(s_j,t_j)\}_{j\in J}, 
\text{~i.e.,~} \{(i_1, k)\}_{k\in I\smallsetminus \{i_1\}}\subseteq \{(s_j, t_j)\}_{j\in J},
\end{equation}
which is a generalization of the cyclically anchored DR algorithm.
If $r= q$, then \eqref{e:full} reads as 
\begin{equation}
\label{e:cyclic}
\{(i_1,i_2), (i_2,i_3), \dots, (i_{r-1},i_r), (i_r, i_1)\}\subseteq \{(s_j,t_j)\}_{j\in J},
\end{equation}
which is a generalization of the cyclic DR algorithm.

\begin{lemma}[shadows of common fixed points under convexity]
\label{l:cvxfixedpoints}
Suppose that $C_i$ is convex for every $i \in I$.
Let $T_{C_i,C_j}$ denote a gDR operator for the pair $(C_i,C_j)$. Let $i_1, \dots, i_r\in I$ (not necessarily distinct).
Then the following hold:
\begin{enumerate}
\item\label{l:cvxfixedpoints_anchored} 
If $\ox\in \bigcap_{k=2}^r \Fix T_{C_{i_1},C_{i_k}}$, then $P_{C_{i_1}}\ox\in \bigcap_{k=1}^r C_{i_k}$.
\item\label{l:cvxfixedpoints_cyclic}
If $\ox\in \bigcap_{k=1}^r \Fix T_{C_{i_k},C_{i_{k+1}}}$, where $i_{r+1}:= i_1$, then $P_{C_{i_1}}\ox= \dots= P_{C_{i_r}}\ox\in \bigcap_{k=1}^r C_{i_k}$.
\item\label{l:cvxfixedpoints_connected}
If the cyclic gDR algorithm is fully connected and $\ox\in \bigcap_{j\in J} \Fix T_j$, then there exists $k\in I$ such that $P_{C_{k}}\ox\in \bigcap_{i\in I} C_i$. 
\end{enumerate}
\end{lemma}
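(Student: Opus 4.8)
The plan is to reduce everything to the two-set convex analysis that is already available from Lemma~\ref{l:cvxFix}\ref{l:cvxFix_reli}, specifically the fact that for convex $A,B$ with a gDR operator $T$ for $(A,B)$ one has $P_A\,\Fix T\subseteq A\cap B$; this is exactly the kind of statement we need for each edge of the graph. The three parts are increasingly global consequences of this local fact, so I would prove \ref{l:cvxfixedpoints_anchored} first, then \ref{l:cvxfixedpoints_cyclic}, then bootstrap \ref{l:cvxfixedpoints_connected} from the two.

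For part~\ref{l:cvxfixedpoints_anchored}: fix $\ox\in\bigcap_{k=2}^r\Fix T_{C_{i_1},C_{i_k}}$ and set $p:=P_{C_{i_1}}\ox$. By Lemma~\ref{l:cvxFix}\ref{l:cvxFix_reli} (or more precisely the inclusion $P_{C_{i_1}}\Fix T_{C_{i_1},C_{i_k}}\subseteq C_{i_1}\cap C_{i_k}$ extracted from its proof, which holds in the consistent convex case regardless of the parameters), for each $k\in\{2,\dots,r\}$ we get $p=P_{C_{i_1}}\ox\in C_{i_1}\cap C_{i_k}$. Intersecting over $k$ yields $p\in C_{i_1}\cap\bigcap_{k=2}^r C_{i_k}=\bigcap_{k=1}^r C_{i_k}$, which is the claim. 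For part~\ref{l:cvxfixedpoints_cyclic}: fix $\ox\in\bigcap_{k=1}^r\Fix T_{C_{i_k},C_{i_{k+1}}}$ with $i_{r+1}=i_1$. For each $k$, applying the same two-set fact to the pair $(C_{i_k},C_{i_{k+1}})$ gives $P_{C_{i_k}}\ox\in C_{i_k}\cap C_{i_{k+1}}$; in particular $P_{C_{i_k}}\ox\in C_{i_{k+1}}$, so $P_{C_{i_{k+1}}}(P_{C_{i_k}}\ox)=P_{C_{i_k}}\ox$. But in the consistent convex case $\Fix T_{C_{i_k},C_{i_{k+1}}}$ projects \emph{identically} onto $C_{i_k}$ and $C_{i_{k+1}}$ — one shows $P_{C_{i_k}}\ox=P_{C_{i_{k+1}}}\ox$ by noting both equal the common projection point, using Fact~\ref{f:cvxproj}\ref{f:cvxproj_ext} and the structure of $\Fix T$ from Lemma~\ref{l:cvxFix}\ref{l:cvxFix_inconsistent} with $g=0$, $E=F=A\cap B$. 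Chaining $P_{C_{i_1}}\ox=P_{C_{i_2}}\ox=\cdots=P_{C_{i_r}}\ox$ around the cycle and invoking part~\ref{l:cvxfixedpoints_anchored}-type reasoning (each $P_{C_{i_k}}\ox\in C_{i_k}\cap C_{i_{k+1}}$, all equal) shows this common point lies in $\bigcap_{k=1}^r C_{i_k}$.

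For part~\ref{l:cvxfixedpoints_connected}: unpack the definition of fully connected, i.e. \eqref{e:full}, giving indices $i_1,\dots,i_q$ with $\{i_1,\dots,i_q\}=I$, a cycle $(i_1,i_2),\dots,(i_{r-1},i_r),(i_r,i_1)$ and spokes $(i_1,i_{r+1}),\dots,(i_1,i_q)$, all appearing among $\{(s_j,t_j)\}_{j\in J}$. Since $\ox\in\bigcap_{j\in J}\Fix T_j$, in particular $\ox$ is a fixed point of $T_{C_{i_k},C_{i_{k+1}}}$ for every edge in the cycle; apply part~\ref{l:cvxfixedpoints_cyclic} to conclude $k:=P_{C_{i_1}}\ox$ satisfies $P_{C_{i_1}}\ox=\cdots=P_{C_{i_r}}\ox\in\bigcap_{k=1}^r C_{i_k}$. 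Then $\ox$ is also a fixed point of $T_{C_{i_1},C_{i_t}}$ for each spoke $t\in\{r+1,\dots,q\}$, so part~\ref{l:cvxfixedpoints_anchored} (applied with base index $i_1$ and the spoke targets, together with the cycle vertices) gives $P_{C_{i_1}}\ox\in\bigcap_{t=r+1}^q C_{i_t}$ as well. Since $\{i_1,\dots,i_q\}=I$, combining gives $P_{C_{i_1}}\ox\in\bigcap_{i\in I}C_i$, so $k=i_1$ works.

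\textbf{Main obstacle.} The delicate point is part~\ref{l:cvxfixedpoints_cyclic}: showing that the projections $P_{C_{i_k}}\ox$ onto the two sets of a gDR fixed point actually \emph{coincide} (not merely lie in the intersection) in the consistent convex case, for arbitrary admissible parameters $(\lambda,\mu,\alpha)$. This requires carefully tracking the relation $b=a+\frac{\lambda+\mu-\lambda\mu}{\mu}(x-a)$ from the proof of Lemma~\ref{l:cvxFix}\ref{l:cvxFix_inconsistent}: with $g=0$ one gets $b=a$, i.e. the two projections are the same point. Once this identification is in hand, the graph-combinatorial chaining in parts~\ref{l:cvxfixedpoints_cyclic} and~\ref{l:cvxfixedpoints_connected} is routine bookkeeping over the edge set.
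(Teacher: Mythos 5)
Your parts \ref{l:cvxfixedpoints_anchored} and \ref{l:cvxfixedpoints_connected} match the paper's argument (the inclusion $P_A\Fix T\subseteq A\cap B$ you invoke is Lemma~\ref{l:cvxFix}\ref{l:cvxFix_inconsistent} with $g=0$, not \ref{l:cvxFix_reli}, but that is a mis-citation, not a gap). The real problem is your part \ref{l:cvxfixedpoints_cyclic}: the claim that for a \emph{single} pair $(A,B)$ with $A\cap B\neq\varnothing$ the fixed-point set of the gDR operator ``projects identically onto $A$ and $B$'' is false when $\lambda=\mu=2$, which is an admissible parameter choice (it is the classical DR case and is precisely the case the lemma must handle, e.g.\ in Theorem~\ref{t:cvxcDR}\ref{c:cDR_fconnect} via $Y_j=\Fix T_j$). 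Concretely, take $A=\RR\times\{0\}$ and $B=\RR\times\RP$ in $\RR^2$ and $\ox=(0,1)$: then $R_BR_A\ox=\ox$, so $\ox\in\Fix T_{2,2}^\alpha$, yet $P_A\ox=(0,0)\neq(0,1)=P_B\ox$. Your proposed repair via the relation $b=a+\tfrac{\lambda+\mu-\lambda\mu}{\mu}(x-a)$ does not save this: that identity is derived in the proof of Lemma~\ref{l:cvxFix}\ref{l:cvxFix_inconsistent} only for $\min\{\lambda,\mu\}<2$ (for $\lambda=\mu=2$ one has $\lambda+\mu-\lambda\mu=0$ and the case is handled separately via $\Fix T=A\cap B+N_{A-B}(0)$), and the $b$ appearing there is $P_B$ of the relaxed point $r=P_A^\lambda x$, not $P_B\ox$.

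What makes \ref{l:cvxfixedpoints_cyclic} true is that the equality of projections is a property of the \emph{whole cycle}, not of each edge. The paper's proof uses from each fixed-point condition only that $P_{C_{i_k}}\ox\in C_{i_k}\cap C_{i_{k+1}}$ (which is valid for all admissible parameters), and then the obtuse-angle characterization of convex projections, $\scal{\ox-P_{C_{i_{k+1}}}\ox}{P_{C_{i_k}}\ox-P_{C_{i_{k+1}}}\ox}\leq 0$, summed cyclically: since $\sum_{k=1}^r\scal{\ox}{P_{C_{i_k}}\ox-P_{C_{i_{k+1}}}\ox}=0$ by telescoping (with $i_{r+1}=i_1$), one gets $\sum_{k=1}^r\|P_{C_{i_k}}\ox-P_{C_{i_{k+1}}}\ox\|^2\leq 0$, hence all consecutive projections coincide. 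You should replace your pairwise identification step by this global summation argument (the argument of \cite[Theorem~3.1]{BT14}); with that change, your overall structure, including the bootstrapping of \ref{l:cvxfixedpoints_connected} from \ref{l:cvxfixedpoints_anchored} and \ref{l:cvxfixedpoints_cyclic}, is the same as the paper's.
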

\begin{proof}
\ref{l:cvxfixedpoints_anchored}: For every $k\in \{2, \dots, r\}$, since $C_{i_1}\cap C_{i_k}\neq \varnothing$, Lemma~\ref{l:cvxFix}\ref{l:cvxFix_inconsistent} implies that $P_{C_{i_1}}\ox\in C_{i_1}\cap C_{i_k}$. Hence, $P_{C_{i_1}}\ox\in \bigcap_{k=1}^r C_{i_k}$. 

\ref{l:cvxfixedpoints_cyclic}: This is basically similar to the argument of \cite[Theorem~3.1]{BT14}. For every $k\in \{1, \dots, r\}$, by Lemma~\ref{l:cvxFix}\ref{l:cvxFix_inconsistent}, $P_{C_{i_k}}\ox\in C_{i_k}\cap C_{i_{k+1}}\subseteq C_{i_{k+1}}$, and then, by \cite[Theorem~3.16]{BC17}, $\scal{x-P_{C_{i_{k+1}}}}{P_{C_{i_k}}-P_{C_{i_{k+1}}}}\leq 0$. It follows that
\begin{subequations}
\begin{align}
0\leq \sum_{k=1}^r \|P_{C_{i_k}}-P_{C_{i_{k+1}}}\|^2&= 2\sum_{k=1}^r \scal{-P_{C_{i_{k+1}}}}{P_{C_{i_k}}-P_{C_{i_{k+1}}}} \\
&= 2\sum_{k=1}^r \scal{x-P_{C_{i_{k+1}}}}{P_{C_{i_k}}-P_{C_{i_{k+1}}}}\leq 0,
\end{align}
\end{subequations}
which yields $P_{C_{i_k}}= P_{C_{i_{k+1}}}$ for all $k\in \{1, \dots, r\}$. 

\ref{l:cvxfixedpoints_connected}: Combine \ref{l:cvxfixedpoints_anchored} and \ref{l:cvxfixedpoints_cyclic}.
\end{proof}

As one would hope for, the linear convergence of the cyclic gDR algorithm is global in the convex case. The next result encompasses \cite[Corollary~8.2 and Theorem~8.5]{BNP15}. 
\begin{theorem}[global linear convergence under convexity]
\label{t:cvxcDR}
Suppose that for every $i \in I$, $C_i$ is convex and that $\bigcap_{i\in I_p} C_i\cap \bigcap_{i\in I\smallsetminus I_p} \reli C_i\neq \varnothing$, where $I_p$ is the set of all $i\in I$ such that $C_i$ and $C_k$ are polyhedral whenever $(i, k)\in \{(s_j, t_j), (t_j, s_j)\}_{j\in J}$. Set  
\begin{equation}
\label{e:Yj}
\forall j\in J,\quad Y_j :=\begin{cases}
C_{s_j}\cap C_{t_j} &\text{~if~} \min\{\lambda_j, \mu_j\}< 2, \\
(C_{s_j}\cap C_{t_j})+ (L_j-L_j)^\perp &\text{~if~} \lambda_j =\mu_j =2 \text{~and~} \reli C_{s_j}\cap \reli C_{t_j}\neq \varnothing, \\
\Fix T_j &\text{~otherwise}.
\end{cases}
\end{equation}
Then the following hold:
\begin{enumerate}
\item\label{c:cDR_gen} 
Regardless of the starting point $x_0$, the cyclic gDR sequence $(x_n)_\nnn$ generated by $(T_j)_{j\in J}$ converges $R$-linearly to a point 
\begin{equation}
\ox\in \bigcap_{j\in J} Y_j\subseteq \bigcap_{j\in J} \Fix T_j, 
\end{equation}  
while the ``shadow sequence'' $(P_{C_i}x_n)_\nnn$ also converges $R$-linearly to $P_{C_i}\ox$ for every $i\in I$.
\item\label{c:cDR_int} 
If $0\in \inte(C_{t_j} -C_{s_j})$ whenever $\lambda_j =\mu_j =2$, then the limit point $\ox$ in \ref{c:cDR_gen} even satisfies $\ox\in C$.
\item\label{c:cDR_connect} 
If the cyclic gDR algorithm is connected and $\reli C_{s_j}\cap \reli C_{t_j}\neq \varnothing$ whenever $\lambda_j =\mu_j =2$, then the limit point $\ox$ in \ref{c:cDR_gen} satisfies $P_{C_1}\ox =\cdots =P_{C_m}\ox\in C$. 
\item\label{c:cDR_fconnect} 
If the cyclic gDR algorithm is fully connected, then the limit point $\ox$ in \ref{c:cDR_gen} satisfies $P_{C_k}\ox\in C$ for some $k\in I$. 
\end{enumerate}
\end{theorem}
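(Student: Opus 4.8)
The plan is to exhibit the present cyclic gDR algorithm as a cyclic algorithm covered by Theorem~\ref{t:lincvg}, with the system $\{C_i\}_{i\in I}$ there replaced by $\{Y_j\}_{j\in J}$ and with all Fej\'er constants equal to $1$, and then to transport the resulting convergence back to the original sets. First I would record the structural facts. By Lemma~\ref{l:cont-avg}\ref{l:cont-avg_avg} each $T_j$ is $\alpha_j/(1+\hbeta_j)$-averaged, hence nonexpansive, and since every point of $C$ is a common fixed point of all the $T_j$, each cyclic sequence $(x_n)_\nnn$ is Fej\'er monotone with respect to every $c\in C$ and in particular bounded, with $x_n\in\ball{c}{\|x_0-c\|}$ for all $n$. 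Moreover, any $\theta$-averaged operator $T$ satisfies $\|Tx-\ox\|^2+\tfrac{1-\theta}{\theta}\|x-Tx\|^2\le\|x-\ox\|^2$ for every $\ox\in\Fix T$, so $T_j$ is $\big(\Fix T_j,1,\beta_j\big)$-quasi firmly Fej\'er monotone on $X$ with $\beta_j:=\tfrac{1-\alpha_j+\hbeta_j}{\alpha_j}\in\RPP$, and since $Y_j\subseteq\Fix T_j$ in each of the three cases of \eqref{e:Yj} (Lemma~\ref{l:Fix}\ref{l:Fix_inclusion}), $T_j$ is also $(Y_j,1,\beta_j)$-quasi firmly Fej\'er monotone on $X$.

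The first substantial point is quasi coercivity of each $T_j$ with respect to $Y_j$ on bounded sets. Fix $x^\ast\in\bigcap_{i\in I_p}C_i\cap\bigcap_{i\in I\smallsetminus I_p}\reli C_i$, which exists by hypothesis, and note the combinatorial fact that if the edge $(s_j,t_j)$ has $C_{s_j}$ or $C_{t_j}$ non-polyhedral then neither $s_j$ nor $t_j$ lies in $I_p$ (the non-polyhedral one by definition, the other because it would then have a non-polyhedral neighbour), so in that event $x^\ast\in\reli C_{s_j}\cap\reli C_{t_j}$. Now split according to \eqref{e:Yj}. If $\min\{\lambda_j,\mu_j\}<2$, then $\Fix T_j=C_{s_j}\cap C_{t_j}=Y_j$ by Lemma~\ref{l:cvxFix}\ref{l:cvxFix_intersect}, and $T_j$ is $Y_j$-quasi coercive on bounded sets by Proposition~\ref{p:gcoer-DR}\ref{p:gcoer-DR_poly} if $C_{s_j},C_{t_j}$ are polyhedral, and otherwise by Proposition~\ref{p:gcoer-DR-lin}, since $x^\ast\in\reli C_{s_j}\cap\reli C_{t_j}$ makes $\{C_{s_j},C_{t_j}\}$ boundedly linearly regular via \cite[Corollary~5]{BBL99}. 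If $\lambda_j=\mu_j=2$ and $\reli C_{s_j}\cap\reli C_{t_j}\neq\varnothing$, then $Y_j=(C_{s_j}\cap C_{t_j})+(L_j-L_j)^\perp$ and $T_j$ is $Y_j$-quasi coercive on bounded sets by Proposition~\ref{p:gcoer-DR}\ref{p:gcoer-DR_reli}. The remaining ``otherwise'' case forces $C_{s_j},C_{t_j}$ to be polyhedral (if one were not, the combinatorial fact would give $x^\ast\in\reli C_{s_j}\cap\reli C_{t_j}$, contradicting the case hypothesis), so Proposition~\ref{p:gcoer-DR}\ref{p:gcoer-DR_poly} applies with $Y_j=\Fix T_j$. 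Hence each $T_j$ is $(Y_j,\nu_j)$-quasi coercive on every bounded set for some $\nu_j\in\RPP$.

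The second substantial point is bounded linear regularity of $\{Y_j\}_{j\in J}$, a system of closed convex sets ($\Fix T_j$ is closed and convex since $T_j$ is nonexpansive) with $\bigcap_{j\in J}Y_j\supseteq C\neq\varnothing$. Whenever $Y_j$ is non-polyhedral, one of $C_{s_j},C_{t_j}$ must be non-polyhedral (polyhedrality of both would make $Y_j$ polyhedral in each of the three cases of \eqref{e:Yj}), so by the combinatorial fact $x^\ast\in\reli C_{s_j}\cap\reli C_{t_j}\subseteq\reli(C_{s_j}\cap C_{t_j})\subseteq\reli Y_j$, while $x^\ast\in C\subseteq Y_j$ when $Y_j$ is polyhedral. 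Therefore \cite[Corollary~5]{BBL99} applies to $\{Y_j\}_{j\in J}$ and yields bounded linear regularity.

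Finally I would fix $w\in C$ and a radius $\delta$ so large that $\ball{w}{\delta/2}$ contains $x_0$ and the whole orbit; then assumptions~(a) and (b) of Theorem~\ref{t:lincvg} hold for $\{Y_j\}_{j\in J}$, the operators $(T_j)_{j\in J}$, $\gamma_j=1$, the $\beta_j$ above, and $\nu:=\min_{j\in J}\{\nu_j,1\}$, so Theorem~\ref{t:lincvg}\ref{t:lincvg_global} gives $R$-linear convergence of $(x_n)_\nnn$ to a point $\ox\in\bigcap_{j\in J}Y_j\subseteq\bigcap_{j\in J}\Fix T_j$, which is~\ref{c:cDR_gen}; the shadow statement follows at once since each $P_{C_i}$ is nonexpansive. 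For~\ref{c:cDR_int}, the extra hypothesis gives $\Fix T_j=C_{s_j}\cap C_{t_j}$ (Lemma~\ref{l:cvxFix}\ref{l:cvxFix_intersect}) whenever $\lambda_j=\mu_j=2$, and together with $C_{s_j}\cap C_{t_j}\subseteq Y_j\subseteq\Fix T_j$ this forces $Y_j=C_{s_j}\cap C_{t_j}$ for all $j$, whence $\bigcap_{j\in J}Y_j=\bigcap_{i\in I}C_i=C$ by~\eqref{e:parameters_I&J}. For~\ref{c:cDR_connect}, the extra hypothesis makes $Y_j=Z_j$ for every $j$, so $\ox\in\bigcap_{j\in J}Z_j$ and Lemma~\ref{l:cfixedpoints} applies; and~\ref{c:cDR_fconnect} follows from Lemma~\ref{l:cvxfixedpoints}\ref{l:cvxfixedpoints_connected}. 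I expect the main obstacle to be exactly this bookkeeping: from the combinatorics of $I_p$ and the single point $x^\ast$, correctly assigning to each pair $(C_{s_j},C_{t_j})$ and each $Y_j$ an available Bauschke--Borwein or polyhedral sufficient condition for (bounded) linear regularity.
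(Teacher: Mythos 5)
Your proposal is correct and follows essentially the same route as the paper's proof: averagedness yields the $(Y_j,1,\tfrac{1-\alpha_j+\hbeta_j}{\alpha_j})$-quasi firm Fej\'er monotonicity on $X$, the global quasi coercivity comes from Propositions~\ref{p:gcoer-DR}/\ref{p:gcoer-DR-lin} via the same $I_p$ case analysis, bounded linear regularity of $\{Y_j\}_{j\in J}$ comes from the relative-interior bookkeeping and \cite[Corollary~5]{BBL99}, and Theorem~\ref{t:lincvg}\ref{t:lincvg_global} together with Lemmas~\ref{l:cvxFix}, \ref{l:cfixedpoints}, and \ref{l:cvxfixedpoints} gives \ref{c:cDR_gen}--\ref{c:cDR_fconnect}. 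The only cosmetic deviation is that, for nonpolyhedral pairs with $\min\{\lambda_j,\mu_j\}<2$, you pass through Proposition~\ref{p:gcoer-DR-lin} and bounded linear regularity of the pair, whereas the paper invokes Proposition~\ref{p:gcoer-DR}\ref{p:gcoer-DR_reli} directly; both are valid.
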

\begin{proof}
First, it follows from Lemma~\ref{l:Fix}\ref{l:Fix_inclusion} that 
\begin{equation}
\label{e:YjFixTj}
\forall j\in J,\quad C_{s_j}\cap C_{t_j}\subseteq Y_j\subseteq\Fix T_j.
\end{equation}

\ref{c:cDR_gen}: For every $j\in J$, by the convexity of $C_{s_j}$ and $C_{t_j}$ and by noting from Lemma~\ref{l:cvxFix}\ref{l:cvxFix_inconsistent} that $\Fix T_j=C_{s_j}\cap C_{t_j}+ N_{C_{s_j}-C_{t_j}}(0)$ whenever $\lambda_j= \mu_j= 2$, we have that $Y_j$ is convex. 
Set $J_p:= \menge{j\in J}{C_{s_j} \text{~and~} C_{t_j} \text{~are polyhedral}}$. Then $Y_j$ is polyhedral for every $j\in J_p$. 

Let $j\in J\smallsetminus J_p$. Since $j\notin J_p$, we must have $s_j, t_j\notin I_p$ and, by assumption, $\reli C_{s_j}\cap \reli C_{t_j}\neq \varnothing$, so $Y_j= C_{s_j}\cap C_{t_j}$ if $\min\{\lambda_j, \mu_j\}< 2$, and $Y_j= (C_{s_j}\cap C_{t_j})+ (L_j-L_j)^\perp$ otherwise. Using \cite[Corollary~6.6.2]{Roc70},
\begin{equation}
\reli Y_j \supseteq\begin{cases}
\reli(C_{s_j}\cap C_{t_j}) &\text{~if~} \min\{\lambda_j, \mu_j\}< 2, \\
\reli(C_{s_j}\cap C_{t_j}) +\reli(L_j-L_j)^\perp &\text{~otherwise},
\end{cases}
\end{equation}
thus $\reli Y_j\supseteq \reli(C_{s_j}\cap C_{t_j}) =\reli C_{s_j}\cap \reli C_{t_j}$ due to \cite[Theorem~6.5]{Roc70}. By combining with \eqref{e:parameters_I&J} and noting that if $i\in I_p$, then $i\in \{s_j,t_j\}$ for some $j\in J_p$,  
\begin{equation}
\label{e:Yj's}
\bigcap_{j\in J_p} Y_j\cap \bigcap_{j\in J\smallsetminus J_p} \reli Y_j
\supseteq \bigcap_{j\in J_p} (C_{s_j}\cap C_{t_j})\cap \bigcap_{j\in J\smallsetminus J_p} (\reli C_{s_j}\cap \reli C_{t_j})
\supseteq \bigcap_{i\in I_p} C_i\cap \bigcap_{i\in I\smallsetminus I_p} \reli C_i\neq \varnothing.
\end{equation}
Let $x_0\in X$ and let $(x_n)_\nnn$ be the cyclic sequence generated by $(T_j)_{j\in J}$ with starting point $x_0$. Choose $\delta\in \RPP$ and $w\in C$ such that $\delta\geq 2\|x_0 -w\|\geq 2 d_C(x_0)$. Then $x_0\in \ball{w}{\delta/2}$. 
From \eqref{e:Yj's} and \cite[Corollary~5]{BBL99}, there is $\kappa\in \RPP$ such that $\{Y_j\}_{j\in J}$ is $\kappa$-linearly regular on $\ball{w}{\delta/2}$.
Let $j\in J$. By assumption, either $\reli C_{s_j}\cap \reli C_{t_j}\neq \varnothing$ or $C_{s_j}$ and $C_{t_j}$ are polyhedral with $C_{s_j}\cap C_{t_j}\neq \varnothing$, so we derive from Proposition~\ref{p:gcoer-DR} that $T_j$ is $(Y_j, \nu_j)$-quasi coercive on $\ball{w}{\delta/2}$ for some $\nu_j\in \RPP$.
By convexity and Lemma~\ref{l:cont-avg}\ref{l:cont-avg_avg}, $T_j$ is $\alpha_j/(1+\hbeta_j)$-averaged, which implies that $T_j$ is $(\Fix T_j, 1, \frac{1-\alpha_j+\hbeta_j}{\alpha_j})$- and hence $(Y_j, 1, \frac{1-\alpha_j+\hbeta_j}{\alpha_j})$-quasi firmly Fej\'er monotone on $X$. 
Now Theorem~\ref{t:lincvg}\ref{t:lincvg_global} yields the $R$-linear convergence of the sequence $(x_n)_\nnn$ to a point $\ox\in \bigcap_{j\in J} Y_j$. 

Next, for every $i\in I$, by Fact~\ref{f:cvxproj}\ref{f:cvxproj_avg}, $P_{C_i}$ is nonexpansive, so $\|P_{C_i}x_n -P_{C_i}\ox\|\leq \|x_n -\ox\|$ for all $n$, and we deduce that $P_{C_i}x_n$ converges $R$-linearly to $P_{C_i}\ox$.

\ref{c:cDR_int}: In the case where $\lambda_j= \mu_j= 2$, since $0\in \inte(C_{t_j}-C_{s_j})$, Lemma~\ref{l:cvxFix}\ref{l:cvxFix_intersect} yields $\Fix T_j= C_{s_j}\cap C_{t_j}$, and then, by \eqref{e:YjFixTj}, $Y_j= C_{s_j}\cap C_{t_j}$. We deduce that $Y_j= C_{s_j}\cap C_{t_j}$ for every $j\in J$, which together with \eqref{e:parameters_I&J} yields $\bigcap_{j\in J} Y_j=\bigcap_{i\in I} C_i= C$.

\ref{c:cDR_connect}: Combine \ref{c:cDR_gen} and Lemma~\ref{l:cfixedpoints}.

\ref{c:cDR_fconnect}: This follows from \ref{c:cDR_gen} and Lemma~\ref{l:cvxfixedpoints}\ref{l:cvxfixedpoints_connected}.
\end{proof}

When specialized to the case of two sets, our results cover Theorems~4.3, 4.7, and~4.14 in \cite{Pha14} where $R$-linear convergence is proved for the classical DR algorithm.
\begin{corollary}[linear convergence of gDR algorithm]
\label{c:gDR}
Let $A$ and $B$ be closed subsets of $X$, $L:=\aff(A\cup B)$, and $w\in A\cap B\neq\varnothing$. Let $\lambda, \mu \in \left]0, 2\right]$, $\hbeta :=\big(\frac{\lambda}{2-\lambda} +\frac{\mu}{2-\mu}\big)^{-1}$, $\alpha \in \big]0, 1+\hbeta\big[$, and $T$ the gDR operator for $(A, B)$ with parameters $(\lambda, \mu, \alpha)$. Then the gDR algorithm generated by $T$
\begin{enumerate}
\item\label{c:gDR_gen}
locally converges with $R$-linear rate to a point $\ox$ in each of the following cases:
	\begin{enumerate}
	\item 
	 $\{A, B\}$ is superregular and affine-hull regular at $w$, in which case $\ox\in A\cap B +(L-L)^\perp$ with $P_A\ox= P_B\ox\in A\cap B$, and if additionally $\{A, B\}$ is strongly regular at $w$, then $\ox\in A\cap B$.
	\item 
	$\min\{\lambda, \mu\}< 2$ and $\{A, B\}$ is superregular at $w$ and linearly regular around $w$, in which case $\ox\in A\cap B$.   
	\end{enumerate}
\item\label{c:gDR_cvx}
globally converges with $R$-linear rate to a point $\ox$ in each of the following cases:
	\begin{enumerate}
	\item\label{c:gDR_cvx_reli} 
	 $A$ and $B$ are convex and $\reli A\cap \reli B\neq \varnothing$, in which case $\ox\in A\cap B +(L-L)^\perp$ with $P_A\ox= P_B\ox\in A\cap B$, and if additionally $0\in \inte(B-A)$, then $\ox\in A\cap B$.
	\item\label{c:gDR_cvx_poly}
	$A$ and $B$ are polyhedral, in which case $\ox\in \Fix T\subseteq A\cap B+ N_{A-B}(0)$ with $P_A\ox\in A\cap B$. 
	\item\label{c:gDR_cvx_lin} 
	$\min\{\lambda, \mu\}< 2$, $A$ and $B$ are convex and $\{A, B\}$ is boundedly linearly regular, in which case $\ox\in A\cap B$.   
	\end{enumerate} 
\end{enumerate}
\end{corollary}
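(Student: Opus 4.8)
The plan is to read Corollary~\ref{c:gDR} off the general cyclic results by specializing to $m=2$, $\ell=1$. Concretely, set $C_1:=A$, $C_2:=B$, $J=\{1\}$, $(s_1,t_1)=(1,2)$, $\lambda_1=\lambda$, $\mu_1=\mu$, $\alpha_1=\alpha$, so that by \eqref{e:gDR} the operator $T_1$ is exactly the gDR operator $T$, and the single-edge graph $\{(1,2)\}$ makes the cyclic gDR algorithm \emph{connected}. The first step is to identify the derived objects: $L_1=L=\aff(A\cup B)$; by \eqref{e:LjZj}, $Z_1=(A\cap B)+(L-L)^\perp$ if $\lambda=\mu=2$ and $Z_1=A\cap B$ otherwise; by \eqref{e:Yj}, $Y_1=A\cap B$ if $\min\{\lambda,\mu\}<2$, $Y_1=(A\cap B)+(L-L)^\perp$ if $\lambda=\mu=2$ and $\reli A\cap\reli B\neq\varnothing$, and $Y_1=\Fix T$ otherwise. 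A key simplification is that, because $\ell=1$, the one-element systems $\{Z_1\}$ and $\{Y_1\}$ are trivially linearly regular (around any point), so the hypotheses of Theorems~\ref{t:super} and \ref{t:cvxcDR} on those systems hold automatically, and all that remains is to feed the pairwise regularity assumptions into the right theorem.

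For part~(i)(a) I would apply Theorem~\ref{t:super}: affine-hull regularity of $\{A,B\}$ at $w$ yields linear regularity of $\{A,B\}$ around $w$ by Proposition~\ref{p:Lstr}\ref{p:Lstr_lin}, so the hypothesis on $\{C_{s_j},C_{t_j}\}$ is met whether or not $\min\{\lambda,\mu\}<2$; this gives local $R$-linear convergence to $\ox\in Z_1\subseteq\Fix T$, while connectedness and Lemma~\ref{l:cfixedpoints} give $P_A\ox=P_B\ox\in A\cap B$, and $Z_1\subseteq(A\cap B)+(L-L)^\perp$. If in addition $\{A,B\}$ is strongly regular at $w$, then (strong $\Rightarrow$ affine-hull $\Rightarrow$ linear regularity) all hypotheses of Corollary~\ref{c:commonpoint} hold, upgrading the limit to $\ox\in A\cap B$. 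Part~(i)(b) is Corollary~\ref{c:commonpoint} verbatim with $\min\{\lambda,\mu\}<2$, since then $Z_1=A\cap B$.

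For the convex cases I would use Theorem~\ref{t:cvxcDR}. In the two-set case $I_p$ is either $\{1,2\}$ (when $A,B$ are both polyhedral, in which case the relative-interior hypothesis reduces to $A\cap B\neq\varnothing$) or $\varnothing$ (in which case it reads $\reli A\cap\reli B\neq\varnothing$), so the hypothesis is satisfied in case~(ii)(a) and in case~(ii)(b). Theorem~\ref{t:cvxcDR}\ref{c:cDR_gen} then gives global $R$-linear convergence of $(x_n)_\nnn$ to $\ox\in Y_1\subseteq\Fix T$ (and $R$-linear convergence of the shadows); in case~(ii)(a), $\reli A\cap\reli B\neq\varnothing$ forces $Y_1\subseteq(A\cap B)+(L-L)^\perp$, \ref{c:cDR_connect} gives $P_A\ox=P_B\ox\in A\cap B$, and \ref{c:cDR_int} supplies $\ox\in A\cap B$ when $0\in\inte(B-A)$; in case~(ii)(b), Lemma~\ref{l:cvxFix}\ref{l:cvxFix_inconsistent} (using $A\cap B\neq\varnothing$) gives $\Fix T\subseteq A\cap B+N_{A-B}(0)$, and Lemma~\ref{l:cvxfixedpoints}\ref{l:cvxfixedpoints_anchored} gives $P_A\ox\in A\cap B$. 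Case~(ii)(c) is the one that does \emph{not} fit Theorem~\ref{t:cvxcDR}, because bounded linear regularity need not imply $\reli A\cap\reli B\neq\varnothing$ for non-polyhedral sets (e.g.\ two perpendicular segments meeting at an endpoint); there I would argue directly: Lemma~\ref{l:cont-avg}\ref{l:cont-avg_avg} makes $T$ averaged, hence $\big(\Fix T,1,\tfrac{1-\alpha+\hbeta}{\alpha}\big)$-quasi firmly Fej\'er monotone on $X$; Lemma~\ref{l:cvxFix}\ref{l:cvxFix_intersect} gives $\Fix T=A\cap B$; Proposition~\ref{p:gcoer-DR-lin} gives quasi coercivity on every bounded set; then, fixing $w\in A\cap B$ and $\delta\geq 2\|x_0-w\|$, Theorem~\ref{t:lincvg}\ref{t:lincvg_global} applied with a single operator ($m=1$, $C_1=A\cap B$) yields global $R$-linear convergence to $\ox\in A\cap B$. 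The proof is essentially bookkeeping; the only real care points are tracking the three-way split for $Y_j$ and the two-way split for $Z_j$ (in particular which sub-cases force $L=X$, letting one drop $(L-L)^\perp$ or pass between affine-hull and strong regularity), and recognizing that part~(ii)(c) needs the Proposition~\ref{p:gcoer-DR-lin}/Theorem~\ref{t:lincvg}\ref{t:lincvg_global} route rather than Theorem~\ref{t:cvxcDR}.
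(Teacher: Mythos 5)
Your proposal is correct and follows essentially the same route as the paper: parts (i) and (ii)(a)--(ii)(b) are obtained by specializing Theorem~\ref{t:super} (with affine-hull regularity giving linear regularity via Proposition~\ref{p:Lstr}) and Theorem~\ref{t:cvxcDR} to $m=2$, $\ell=1$, $(s_1,t_1)=(1,2)$, while (ii)(c) is handled exactly as in the paper by combining averagedness (Lemma~\ref{l:cont-avg}), the resulting quasi firm Fej\'er monotonicity with respect to $A\cap B$, the quasi coercivity from Proposition~\ref{p:gcoer-DR-lin}, and Theorem~\ref{t:lincvg}\ref{t:lincvg_global}. Your extra bookkeeping (identifying $Z_1$, $Y_1$, invoking Corollary~\ref{c:commonpoint} for the strong-regularity upgrade, and Lemmas~\ref{l:cvxFix} and~\ref{l:cvxfixedpoints} for the shadow claims) only makes explicit what the paper's terse proof leaves implicit.
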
 
\begin{proof} 
Applying Theorem~\ref{t:super} (noting that affine-hull regularity implies linear regularity due to Proposition~\ref{p:Lstr}) and Theorem~\ref{t:cvxcDR} with $m =2$, $\ell =1$, and $(s_1,t_1)= (1,2)$, we get \ref{c:gDR_gen} and \ref{c:gDR_cvx_reli}--\ref{c:gDR_cvx_poly}. 

We now prove \ref{c:gDR_cvx_lin}. Let $x_0\in X$ and let $(x_n)_\nnn$ be the cyclic sequence generated by $(T_j)_{j\in J}$ with starting point $x_0$. Take $\delta\in \RPP$ and $w\in A\cap B$ such that $\delta\geq 2\|x_0 -w\|\geq 2 d_{A\cap B}(x_0)$. Then $x_0\in \ball{w}{\delta/2}$. 
Since $A$ and $B$ are convex, Lemma~\ref{l:cont-avg}\ref{l:cont-avg_avg} implies that $T$ is $\alpha/(1+\hbeta)$-averaged, hence it is $(\Fix T, 1, \frac{1-\alpha+\hbeta}{\alpha})$- and also $(A\cap B, 1, \frac{1-\alpha+\hbeta}{\alpha})$-quasi firmly Fej\'er monotone on $X$. 
According to Proposition~\ref{p:gcoer-DR-lin}, $T$ is $(A\cap B, \nu)$-quasi coercive on $\ball{w}{\delta/2}$ for some $\nu\in \RPP$. Now apply Theorem~\ref{t:lincvg}\ref{t:lincvg_global} to $T$ and the corresponding set $A\cap B$.
\end{proof}

\section{Conclusion}
\label{s:con}

In this paper, we have presented a diverse collection of improvements on the linear convergence of the (cyclic) generalized Douglas--Rachford algorithm for solving feasibility problems. Our results indicate that one has great flexibility in choosing suitable parameters and still achieve convergence with linear rate.
For instance, we have proved that the generalized DR algorithm involving at most one reflection converges $R$-linearly locally assuming only that the system of superregular sets $\{A,B\}$ is linearly regular around the reference point. Interestingly, it remains open even in the convex case whether the classical DR algorithm (i.e., $\lambda=\mu=2$ and $\alpha=1/2$) converges with $R$-linear rate under the same assumption.

\section*{Acknowledgments}
The authors are thankful to the editors and the referees for their constructive comments. MND was partially supported by the Australian Research Council (ARC) under Discovery Project 160101537 and by Vietnam National Foundation for Science and Technology Development (NAFOSTED) under grant 101.02-2016.11.
This work was essentially completed during MND's visit to UMass Lowell in September 2017 to whom he acknowledges the hospitality.

\end{document}